\let\origsection=\section \def\section{\@ifstar{\origsection*}{\mysection}}
\def\mysection{\@startsection{section}{1}\z@{.7\linespacing\@plus\linespacing}{.5\linespacing}{\normalfont\scshape\centering\S}}
\renewcommand{\PrintDOI}[1]{\doi{#1}}
\numberwithin{equation}{section}
\numberwithin{figure}{section}
\def\rmlabel{\upshape({\itshape \roman*\,})}
   \def\MR#1{}
\def\greek#1{\expandafter\@greek\csname c@#1\endcsname}
\def\Greek#1{\expandafter\@Greek\csname c@#1\endcsname}
\def\@greek#1{\ifcase#1
	\or $\alpha$%
	\or $\beta$%
	\or $\gamma$%
	\or $\delta$%
	\or $\epsilon$%
	\or $\zeta$%
	\or $\eta$%
	\or $\theta$%
	\or $\iota$%
	\or $\kappa$%
	\or $\lambda$%
	\or $\mu$%
	\or $\nu$%
	\or $\xi$%
	\or $o$%
	\or $\pi$%
	\or $\rho$%
	\or $\sigma$%
	\or $\tau$%
	\or $\upsilon$%
	\or $\phi$%
	\or $\chi$%
	\or $\psi$%
	\or $\omega$%
\fi}
\def\@Greek#1{\ifcase#1
	\or $\mathrm{A}$%
	\or $\mathrm{B}$%
	\or $\Gamma$%
	\or $\Delta$%
	\or $\mathrm{E}$%
	\or $\mathrm{Z}$%
	\or $\mathrm{H}$%
	\or $\Theta$%
	\or $\mathrm{I}$%
	\or $\mathrm{K}$%
	\or $\Lambda$%
	\or $\mathrm{M}$%
	\or $\mathrm{N}$%
	\or $\Xi$%
	\or $\mathrm{O}$%
	\or $\Pi$%
	\or $\mathrm{P}$%
	\or $\Sigma$%
	\or $\mathrm{T}$%
	\or $\mathrm{Y}$%
	\or $\Phi$%
	\or $\mathrm{X}$%
	\or $\Psi$%
	\or $\Omega$%
\fi}
\AddEnumerateCounter{\greek}{\@greek}{24}
\AddEnumerateCounter{\Greek}{\@Greek}{12}
\let\polishlcross=\l
\def\l{\ifmmode\ell\else\polishlcross\fi}
\def\paragraph#1{%
  %\smallskip%
  \noindent\textbf{#1.}\enspace}
\let\emptyset=\varnothing
\let\setminus=\smallsetminus
\def\moverlay{\mathpalette\mov@rlay}
\def\mov@rlay#1#2{\leavevmode\vtop{   \baselineskip\z@skip \lineskiplimit-\maxdimen
   \ialign{\hfil$\m@th#1##$\hfil\cr#2\crcr}}}
\newcommand{\charfusion}[3][\mathord]{
    #1{\ifx#1\mathop\vphantom{#2}\fi
        \mathpalette\mov@rlay{#2\cr#3}
      }
    \ifx#1\mathop\expandafter\displaylimits\fi}
\DeclareFontFamily{U}  {MnSymbolC}{}
\DeclareSymbolFont{MnSyC}         {U}  {MnSymbolC}{m}{n}
\DeclareFontShape{U}{MnSymbolC}{m}{n}{
    <-6>  MnSymbolC5
   <6-7>  MnSymbolC6
   <7-8>  MnSymbolC7
   <8-9>  MnSymbolC8
   <9-10> MnSymbolC9
  <10-12> MnSymbolC10
  <12->   MnSymbolC12}{}
\DeclareMathSymbol{\powerset}{\mathord}{MnSyC}{180}
\let\epsilon=\varepsilon
\let\eps=\epsilon
\let\rho=\varrho
\let\theta=\vartheta
\let\kappa=\varkappa
\let\E=\EE
\def\PP{\Pr}
\def\rm{\mathbb{RM}}
\newcommand{\cM}{\mathcal{M}}
\newcommand{\cP}{\mathcal{P}}
\theoremstyle{plain}
\newtheorem{thm}{Theorem}[section]
\newtheorem{theorem}[thm]{Theorem}
\newtheorem{prop}[thm]{Proposition}
\newtheorem{cor}[thm]{Corollary}
\newtheorem{c-cor}[thm]{Co-Corollary}
\newtheorem{lemma}[thm]{Lemma}
\theoremstyle{definition}
\newtheorem{rem}[thm]{Remark}
\newtheorem{conj}[thm]{Conjecture}
\newtheorem{prob}[thm]{Problem}
\let\phi=\varphi
\DeclareMathOperator{\dec}{dec}
\begin{document}

\title[Erd\H os-Szekeres for uniform matchings]{Erd\H os-Szekeres type Theorems for ordered uniform  matchings}
%\footnote{\protect\myfootnotetitle}

\author{Andrzej Dudek}
\address{Department of Mathematics, Western Michigan University, Kalamazoo, MI, USA}
\email{\tt andrzej.dudek@wmich.edu}
\thanks{The first author was supported in part by Simons Foundation Grant MPS-TSM-00007551.}

\author{Jaros\l aw Grytczuk}
\address{Faculty of Mathematics and Information Science, Warsaw University of Technology, Warsaw, Poland}
\email{j.grytczuk@mini.pw.edu.pl}
\thanks{The second author was supported in part by Narodowe Centrum Nauki, grant 2020/37/B/ST1/03298.}

\author{Andrzej Ruci\'nski}
\address{Department of Discrete Mathematics, Adam Mickiewicz University, Pozna\'n, Poland}
\email{\tt rucinski@amu.edu.pl}
\thanks{The third author was supported in part by Narodowe Centrum Nauki, grant 2018/29/B/ST1/00426}

\begin{abstract} For $r,n\ge2$, an ordered $r$-uniform matching of size $n$ is an $r$-uniform hypergraph on a linearly ordered vertex set~$V$, with $|V|=rn$, consisting of $n$ pairwise disjoint edges. There are $\tfrac12\binom{2r}r$ different  ways two edges may intertwine,  called here patterns. Among them we identify $3^{r-1}$ collectable patterns $P$, which have the potential of appearing in arbitrarily large quantities called  $P$-cliques. %More precisely, a pattern $P$ is collectable if for every $k\ge2$ there exists an $M_k^{(r)}$, called a \emph{$P$-clique}, such that each of its $\binom k2$ pairs of edges forms pattern $P$.
	
We prove an Erd\H{o}s-Szekeres type result guaranteeing in \emph{every} ordered $r$-uniform matching the presence of a $P$-clique of a prescribed size, for \emph{some} collectable pattern~$P$. In particular, in the diagonal case, one of the $P$-cliques must be of size  $\Omega\left( n^{3^{1-r}}\right)$. In addition, for \emph{each} collectable pattern $P$ we show that the largest size of a $P$-clique in a \emph{random} ordered $r$-uniform matching of size $n$ is, with high probability, $\Theta\left(n^{1/r}\right)$.
\end{abstract}

\maketitle

%\tableofcontents

%\setcounter{footnote}{1}

\section{Introduction}\label{intro}

\subsection{Ordered matchings, words, and patterns}
A hypergraph $G$ is called \emph{ordered} if its vertex set is linearly ordered. If all edges of $G$ have the same size $r\ge 2$, then $G$ is an \emph{ordered $r$-uniform hypergraph}, or shortly, an \emph{ordered $r$-graph}. Let $G$ and $H$ be two ordered  $r$-graphs with $V(G)=\{v_1<\cdots<v_m\}$ and $V(H)=\{w_1<\cdots<w_m\}$, for some  $m\ge1$. We say that $G$ and $H$ are \emph{order-isomorphic} if for all $1\le i_1<\cdots<i_r\le m$, we have $\{v_{i_1},\dots,v_{i_r}\}\in E(G)$ if and only if $\{w_{i_1},\dots,w_{i_r}\}\in E(H)$.

Compared to ordinary isomorphism, one context in which order-isomorphism makes quite a difference is that of sub-hypergraph containment. If $G$ is an ordered $r$-graph, then any sub-$r$-graph $G'$ of $G$ can be also treated as an ordered $r$-graph with the ordering of $V(G')$ inherited from the ordering of $V(G)$.
Given  two ordered $r$-graphs,  $G$ and  $H$, we say that a sub-$r$-graph $G'\subset G$ is an \emph{ordered copy} of $H$ in $G$ if $G'$ and $H$ are order-isomorphic.
All kinds of questions concerning sub-hypergraphs in unordered hypergraphs can be posed for ordered hypergraphs as well (see, e.g., \cites{Tardos,BGT}).

In this paper we focus exclusively on \emph{ordered $r$-uniform matchings}, or shortly, \emph{$r$-matchings}, which are ordered $r$-graphs with pairwise disjoint edges (and no isolated vertices). There are precisely
$\frac{(rn)!}{(r!)^n\, n!}$ $r$-matchings of size $n$ (on a fixed ordered vertex set of size $rn$, typically on $[rn]:=\{1,\dots,rn\}$). %Any one of them will be denoted by $M^{(r)}_n$.
Let ${\mathcal M}_n^{(r)}$ denote the family of all of them.

A convenient way of representing ordered $r$-matchings is to use \emph{words}. Let $M\in {\mathcal M}_n^{(r)}$ be an ordered $r$-matching of size $n$ on vertex set $[rn]$. A word $W=w_1w_2\cdots w_{rn}$ \emph{represents} $M$ if for every edge $e\in M$ we have
$w_i=w_j$ for all $i,j\in e$ and $w_\neq w_j$ for all $i\in e$ and $j\not\in e$.
 For instance, if $r=3$ and $M=\{\{1,2,4\}, \{3,10,12\},\{5,7,11\},\{6,8,9\}\}$,
then, two examples of   words representing $M$ are  $AABACDCDDBCB$ and $CCECDFDFFEDE$. Obviously, there are infinitely many such words, however,  every one of them consists of four distinct letters, each repeated exactly three times.

In general, any two words, $U=u_1\cdots u_{rn}$ and $W=w_1\cdots w_{rn}$, representing a given matching $M\in {\mathcal M}_n^{(r)}$, must be \emph{equivalent} in the sense that $u_i=u_j$ if and only if $w_i=w_j$, for all $1\le i<j\le rn$. Clearly, every such word consists of $n$ distinct letters, each occurring exactly $r$ times. If an alphabet of size $n$ is fixed, which is natural to assume, then every matching $M\in {\mathcal M}_n^{(r)}$ has exactly $n!$ distinct representing words constituting an equivalence class in the above relation.

Throughout the paper
we will be identifying ordered matchings with the equivalence classes of their representing words. Typically, to represent a given ordered matching, we will choose a word whose letters occur in alphabetic order. However, in a few places it will be convenient or even necessary to abandon this convention. %(See. e.g., Table~\ref{table:relations}, third column,  where  words $ABBA$ and $BAAB$ represent the very same matching $\{1,4\},\{2,3\}$.)

Ordered $r$-matchings of size two are called \emph{patterns} (or \emph{$r$-patterns} if the uniformity is to be emphasized). For  $r=2$, there are three distinct patterns represented by words $AABB$, $ABBA$, and $ABAB$, or words equivalent to them. We call them, respectively, an \emph{alignment}, a \emph{nesting}, and a \emph{crossing} (see Fig. \ref{ESZ1}). For $r=3$ there exist already ten distinct patterns (see Table \ref{table:relations}), and in general, for every $r\ge 2$, there are exactly $\frac{1}{2}\binom{2r}{r}$ of them. The $r$-pattern in which one edge is completely to the left of the other, represented by word $\underbrace{A\cdots A}_r\underbrace{B\cdots B}_r$ will be called, as in the case $r=2$, an \emph{alignment}.

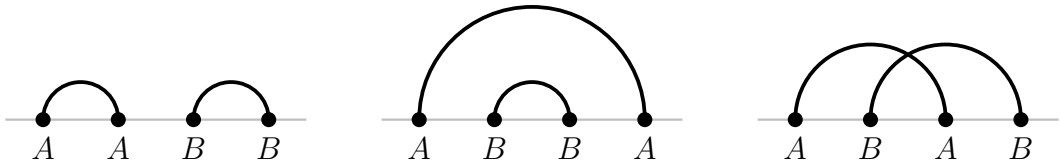
\begin{figure}[ht]
	\captionsetup[subfigure]{labelformat=empty}
	\begin{center}
		
		\scalebox{1}
		{
			\centering
			\begin{tikzpicture}
				[line width = .5pt,
				vtx/.style={circle,draw,black,very thick,fill=black, line width = 1pt, inner sep=0pt},
				]
				
				%Alignment
				\coordinate (0) at (0.5,0) {};
				\node[vtx] (1) at (1,0) {};
				\node[vtx] (2) at (2,0) {};
				\node[vtx] (3) at (3,0) {};
				\node[vtx] (4) at (4,0) {};
				\coordinate (5) at (4.5,0) {};
				\draw[line width=0.3mm, color=lightgray]  (0) -- (5);
				\fill[fill=black, outer sep=1mm]  (1) circle (0.1) node [below] {$A$};
				\fill[fill=black, outer sep=1mm]  (2) circle (0.1) node [below] {$A$};
				\fill[fill=black, outer sep=1mm]  (3) circle (0.1) node [below] {$B$};
				\fill[fill=black, outer sep=1mm]  (4) circle (0.1) node [below] {$B$};
				\draw[line width=0.5mm, color=black, outer sep=2mm] (2) arc (0:180:0.5);
				\draw[line width=0.5mm, color=black, outer sep=2mm] (4) arc (0:180:0.5);
				
				%Nesting
				\coordinate (0) at (5.5,0) {};
				\node[vtx] (1) at (6,0) {};
				\node[vtx] (2) at (7,0) {};
				\node[vtx] (3) at (8,0) {};
				\node[vtx] (4) at (9,0) {};
				\coordinate (5) at (9.5,0) {};
				\draw[line width=0.3mm, color=lightgray]  (0) -- (5);
				\fill[fill=black, outer sep=1mm]  (1) circle (0.1) node [below] {$A$};
				\fill[fill=black, outer sep=1mm]  (2) circle (0.1) node [below] {$B$};
				\fill[fill=black, outer sep=1mm]  (3) circle (0.1) node [below] {$B$};
				\fill[fill=black, outer sep=1mm]  (4) circle (0.1) node [below] {$A$};
				\draw[line width=0.5mm, color=black, outer sep=2mm] (3) arc (0:180:0.5);
				\draw[line width=0.5mm, color=black, outer sep=2mm] (4) arc (0:180:1.5);
				
				%Crossing
				\coordinate (0) at (10.5,0) {};
				\node[vtx] (1) at (11,0) {};
				\node[vtx] (2) at (12,0) {};
				\node[vtx] (3) at (13,0) {};
				\node[vtx] (4) at (14,0) {};
				\coordinate (5) at (14.5,0) {};
				\draw[line width=0.3mm, color=lightgray]  (0) -- (5);
				\fill[fill=black, outer sep=1mm]  (1) circle (0.1) node [below] {$A$};
				\fill[fill=black, outer sep=1mm]  (2) circle (0.1) node [below] {$B$};
				\fill[fill=black, outer sep=1mm]  (3) circle (0.1) node [below] {$A$};
				\fill[fill=black, outer sep=1mm]  (4) circle (0.1) node [below] {$B$};
				\draw[line width=0.5mm, color=black, outer sep=2mm] (3) arc (0:180:1);
				\draw[line width=0.5mm, color=black, outer sep=2mm] (4) arc (0:180:1);
				
			\end{tikzpicture}
		}
		
	\end{center}
	\caption{An alignment, a nesting, and a crossing of a pair of edges.}
	\label{ESZ1}
	
\end{figure}

A homogeneous ordered $r$-matching, that is, one in which every pair of edges forms the same pattern $P$, will be called a \emph{$P$-clique}. For instance, the three possible $2$-patterns, $AABB$, $ABBA$, and $ABAB$, give rise, respectively, to three types of $P$-cliques, called \emph{lines}, \emph{stacks}, and \emph{waves} (see Fig.~\ref{ESZ2}). If the $r$-pattern $P$ is an alignment, then the $P$-clique will also be called a \emph{line}.

The term ``clique'' indicates that we will be dealing with a Ramsey type problem concerning monochromatic cliques in edge colored complete graphs. Indeed, for a given matching $M\in {\mathcal M}_n^{(r)}$, one may imagine a complete graph $K_n$ whose vertices are the edges of $M$ and whose edges are colored with patterns formed by their ends (pairs of edges of $M$). Then $P$-cliques in $M$ correspond exactly to monochromatic cliques of ``color'' $P$ in $K_n$.

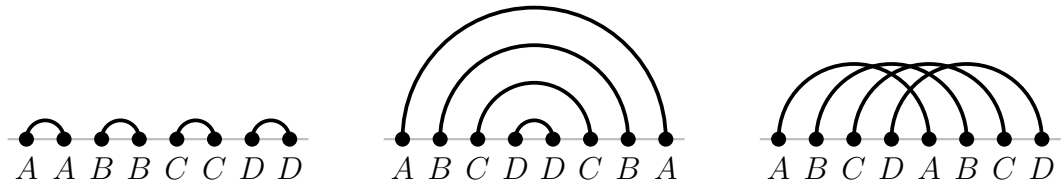
\begin{figure}[ht]
	\captionsetup[subfigure]{labelformat=empty}
	\begin{center}
		
		\scalebox{1}
		{
			\centering
			\begin{tikzpicture}
				[line width = .5pt,
				vtx/.style={circle,draw,black,very thick,fill=black, line width = 1pt, inner sep=0pt},
				]
				
				%Line
				\coordinate (0) at (0.25,0) {};
				\node[vtx] (1) at (0.5,0) {};
				\node[vtx] (2) at (1,0) {};
				\node[vtx] (3) at (1.5,0) {};
				\node[vtx] (4) at (2,0) {};
				\node[vtx] (5) at (2.5,0) {};
				\node[vtx] (6) at (3,0) {};
				\node[vtx] (7) at (3.5,0) {};
				\node[vtx] (8) at (4,0) {};
				\coordinate (9) at (4.25,0) {};
				\draw[line width=0.3mm, color=lightgray]  (0) -- (9);
				\fill[fill=black, outer sep=1mm]  (1) circle (0.1) node [below] {$A$};
				\fill[fill=black, outer sep=1mm]  (2) circle (0.1) node [below] {$A$};
				\fill[fill=black, outer sep=1mm]  (3) circle (0.1) node [below] {$B$};
				\fill[fill=black, outer sep=1mm]  (4) circle (0.1) node [below] {$B$};
				\fill[fill=black, outer sep=1mm]  (5) circle (0.1) node [below] {$C$};
				\fill[fill=black, outer sep=1mm]  (6) circle (0.1) node [below] {$C$};
				\fill[fill=black, outer sep=1mm]  (7) circle (0.1) node [below] {$D$};
				\fill[fill=black, outer sep=1mm]  (8) circle (0.1) node [below] {$D$};
				\draw[line width=0.5mm, color=black, outer sep=2mm] (2) arc (0:180:0.25);
				\draw[line width=0.5mm, color=black, outer sep=2mm] (4) arc (0:180:0.25);
				\draw[line width=0.5mm, color=black, outer sep=2mm] (6) arc (0:180:0.25);
				\draw[line width=0.5mm, color=black, outer sep=2mm] (8) arc (0:180:0.25);
				
				%Stack
				\coordinate (0) at (5.25,0) {};
				\node[vtx] (1) at (5.5,0) {};
				\node[vtx] (2) at (6,0) {};
				\node[vtx] (3) at (6.5,0) {};
				\node[vtx] (4) at (7,0) {};
				\node[vtx] (5) at (7.5,0) {};
				\node[vtx] (6) at (8,0) {};
				\node[vtx] (7) at (8.5,0) {};
				\node[vtx] (8) at (9,0) {};
				\coordinate (9) at (9.25,0) {};
				\draw[line width=0.3mm, color=lightgray]  (0) -- (9);
				\fill[fill=black, outer sep=1mm]  (1) circle (0.1) node [below] {$A$};
				\fill[fill=black, outer sep=1mm]  (2) circle (0.1) node [below] {$B$};
				\fill[fill=black, outer sep=1mm]  (3) circle (0.1) node [below] {$C$};
				\fill[fill=black, outer sep=1mm]  (4) circle (0.1) node [below] {$D$};
				\fill[fill=black, outer sep=1mm]  (5) circle (0.1) node [below] {$D$};
				\fill[fill=black, outer sep=1mm]  (6) circle (0.1) node [below] {$C$};
				\fill[fill=black, outer sep=1mm]  (7) circle (0.1) node [below] {$B$};
				\fill[fill=black, outer sep=1mm]  (8) circle (0.1) node [below] {$A$};
				\draw[line width=0.5mm, color=black, outer sep=2mm] (5) arc (0:180:0.25);
				\draw[line width=0.5mm, color=black, outer sep=2mm] (6) arc (0:180:0.75);
				\draw[line width=0.5mm, color=black, outer sep=2mm] (7) arc (0:180:1.25);
				\draw[line width=0.5mm, color=black, outer sep=2mm] (8) arc (0:180:1.75);
				
				%Wave
				\coordinate (0) at (10.25,0) {};
				\node[vtx] (1) at (10.5,0) {};
				\node[vtx] (2) at (11,0) {};
				\node[vtx] (3) at (11.5,0) {};
				\node[vtx] (4) at (12,0) {};
				\node[vtx] (5) at (12.5,0) {};
				\node[vtx] (6) at (13,0) {};
				\node[vtx] (7) at (13.5,0) {};
				\node[vtx] (8) at (14,0) {};
				\coordinate (9) at (14.25,0) {};
				\draw[line width=0.3mm, color=lightgray]  (0) -- (9);
				\fill[fill=black, outer sep=1mm]  (1) circle (0.1) node [below] {$A$};
				\fill[fill=black, outer sep=1mm]  (2) circle (0.1) node [below] {$B$};
				\fill[fill=black, outer sep=1mm]  (3) circle (0.1) node [below] {$C$};
				\fill[fill=black, outer sep=1mm]  (4) circle (0.1) node [below] {$D$};
				\fill[fill=black, outer sep=1mm]  (5) circle (0.1) node [below] {$A$};
				\fill[fill=black, outer sep=1mm]  (6) circle (0.1) node [below] {$B$};
				\fill[fill=black, outer sep=1mm]  (7) circle (0.1) node [below] {$C$};
				\fill[fill=black, outer sep=1mm]  (8) circle (0.1) node [below] {$D$};
				\draw[line width=0.5mm, color=black, outer sep=2mm] (5) arc (0:180:1);
				\draw[line width=0.5mm, color=black, outer sep=2mm] (6) arc (0:180:1);
				\draw[line width=0.5mm, color=black, outer sep=2mm] (7) arc (0:180:1);
				\draw[line width=0.5mm, color=black, outer sep=2mm] (8) arc (0:180:1);
				
			\end{tikzpicture}
		}
		
	\end{center}
	
	\caption{A line, a stack, and a wave of size four.}
	\label{ESZ2}
	
\end{figure}

\subsection{Erd\H{o}s-Szekers type results for ordered $2$- and $3$-matchings}

 A frequent theme in combinatorics and graph theory concerns  unavoidable sub-structures that appear in every member of a prescribed family of structures. A flagship example  is the famous theorem of Erd\H{o}s and Szekeres \cite{ErdosSzekeres} on monotone subsequences (see, e.g., \cites{BKP,BucicSudakovTran,BukhMatousek,EliasMatousek,FoxPachSudakovSuk,MoshkovitzShapira,SzaboTardos,Czabarka} for some recent extensions and generalizations). In its diagonal form it states that any sequence $x_1,x_2,\ldots, x_n$ of distinct real numbers contains a monotone subsequence of length at least~$\sqrt n$. Our goal is to prove its analog for ordered $r$-matchings.

 The reason why the original  Erd\H os-Szekeres Theorem lists only two types of subsequences is, obviously, that for any two elements $x_i$ and $x_j$ with $i<j$ there are just two possible relations: $x_i< x_j$ or $x_i> x_j$. However, as noted above, for every two edges in an ordered $r$-matching, there are exactly $\tfrac12\binom{2r}r$ patterns in which they may intertwine. Therefore, more complex phenomena involving homogeneous sub-structures can be expected in this case.

 In \cite{DGR-match} (see also~\cite{DGR_LATIN}) we proved the following analog of the Erd\H os-Szekeres Theorem for ordered 2-matchings.

 \begin{theorem}[\cite{DGR-match}]\label{Theorem E-S for LSW}
	Let $\ell,s,w$ be arbitrary positive integers and let $n\ge\ell sw+1$. Then, every matching $M\in {\mathcal M}_n^{(2)}$ contains a line of size $\ell+1$, or a stack of size $s+1$, or a wave of size $w+1$. Moreover, this is not true for $n=\ell sw$.
\end{theorem}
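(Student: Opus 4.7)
My plan is to prove the sufficiency via a two-step Erd\H os--Szekeres argument, and the tightness by a recursive blow-up construction. For the sufficiency, I would order the edges of $M\in\mathcal{M}_n^{(2)}$ by their left endpoints as $e_1<\cdots<e_n$ and consider the sequence $(r_1,\dots,r_n)$ of corresponding right endpoints. A decreasing subsequence of $(r_i)$ corresponds exactly to a set of edges with increasing left and decreasing right endpoints, i.e., a stack. Since $n\ge\ell s w+1=(\ell w)\cdot s+1$, the classical Erd\H os--Szekeres theorem produces either a decreasing subsequence of length $s+1$ (yielding the desired stack of size $s+1$), or an increasing subsequence of length $\ell w+1$; let $F$ denote the corresponding edge set. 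The edges of $F$ have strictly increasing left and right endpoints, so no two of them nest, and any two of them must therefore either align or cross.

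I would then define a partial order on $F$ by declaring $e\preceq f$ iff $e=f$ or $\mathrm{right}(e)<\mathrm{left}(f)$; transitivity follows from transitivity of alignment. Chains under $\preceq$ are precisely lines, while antichains are pairwise-crossing families of edges in $F$. Inside $F$, pairwise crossing already forces the global wave condition that the largest left endpoint is less than the smallest right endpoint (apply the crossing inequality to the first and last edges of the antichain), so antichains in $F$ are precisely waves. Mirsky's theorem then bounds $|F|$ by (longest chain)$\times$(largest antichain); if $F$ contained neither a line of size $\ell+1$ nor a wave of size $w+1$, this would give $|F|\le\ell w$, contradicting $|F|=\ell w+1$.

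For tightness I would give a threefold recursive blow-up: a line of $\ell$ super-blocks, each super-block a stack of $s$ sub-blocks, and each sub-block a wave of $w$ edges, for a total of $\ell s w$ edges. Because edges inside the same super-block are never aligned, edges inside the same sub-block are never nested, and edges from different sub-blocks (or different super-blocks) never cross, the maximum line, stack, and wave sizes are exactly $\ell$, $s$, and $w$.

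The main subtle point, which motivates splitting the argument in two, is that the naive Erd\H os--Szekeres-style labelling of each edge $e$ by the triple (longest line, longest stack, longest wave ending at $e$) fails for the wave coordinate: extending a wave by an edge $f$ that crosses only its last edge does not guarantee that $f$ crosses the earlier edges, because the global ``all lefts before all rights'' constraint of a wave can be broken. Restricting first to $F$ via the classical Erd\H os--Szekeres theorem removes this obstruction, since inside $F$ local crossing is automatically compatible with the global wave condition, and the clean Mirsky argument then applies.
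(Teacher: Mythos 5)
Your proposal is correct, and its sufficiency argument has the same two-step skeleton as the paper's proof in the appendix: order the edges by left endpoints, apply the classical Erd\H os--Szekeres theorem to the sequence of right endpoints to obtain either a stack of size $s+1$ or a nesting-free family (the paper's ``landscape'') of $\ell w+1$ edges, and then extract a line or a wave inside that family. The only real divergence is in the second step: the paper argues by hand, greedily decomposing the landscape into edge-disjoint waves and observing that the leftmost edges of the successive waves form a line, so that fewer than $\ell+1$ waves forces one wave of size at least $w+1$; you instead put the partial order $e\preceq f\iff \mathrm{right}(e)<\mathrm{left}(f)$ on the landscape and invoke Mirsky's theorem, using that chains are lines and antichains (being nesting-free and pairwise incomparable) are pairwise crossing, hence waves. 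These are equivalent in substance --- the greedy decomposition is exactly a hands-on proof of the Mirsky bound in this special poset, and the paper itself notes that Huynh, Joos and Wollan derived the symmetric case from Dilworth's theorem --- but your version outsources the combinatorial work to a standard lemma, at the cost of checking transitivity and the chain/antichain identifications, which you do correctly. For the tightness claim, the appendix proof omits it (deferring to the cited earlier paper), but your construction is precisely the iterated blow-up $L_\ell[S_s[W_w]]$ described in Subsection~\ref{opt}, and your verification that the maximum line, stack, and wave sizes are $\ell$, $s$, $w$ is sound.
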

In the symmetric case ($\ell=s=w$) Theorem~\ref{Theorem E-S for LSW} was deduced from Dilworth's Theorem by Huynh, Joos and Wollan in \cite[Lemma 21]{HJW2019} (though, a similar proof works in the general case too). For the sake of completeness, we give yet another proof
in the appendix.

 Also in \cite{DGR-match} we proved  an analogous, though a bit blurred, result for ordered $3$-matchings. There are $\tfrac12\binom63=10$ possible patterns that can be formed by two triples, see Table~\ref{table:relations} (ignore for a while the last column, as well as the alternative notation with double subscripts). Unfortunately, one of them, $P_1^*=AABABB$, stands out as one that cannot be formed mutually by more than two edges, in other words, there are no $P_1^*$-cliques of size larger than two. Nevertheless, in \cite{DGR-match} we managed to prove an Erd\H os-Szekeres type result guaranteeing in every $M\in\mathcal{M}_n^{(3)}$ the presence of $P$-clique of a prescribed size for one of the nine remaining patterns $P$, with some deficiency with respect to the pattern $P_1=AAABBB$ caused by a possible, though scattered, occurrence  of the defective pattern~$P_1^*$.
 %(though, the association of these two patterns was rather an artifact of our proof method than a genuine necessity).

 \begin{table}[h!]
 	\centering
 	\begin{tabular}{ |c||c||c| }
 		\hline
 		
 		\textsc{pattern}& \textsc{as word} & \textsc{decomposition}\\
 		\hline
 		\hline
 		$P_1=P_{\ell,\ell}$ & $AAABBB$ & $\text{dec}(\underline{A}\overline{\underline{A}}\overline{A}\underline{B}\overline{\underline{B}}\overline{B})=(AABB,AABB)$\\
 		\hline
 		$P_1^*=P^*_{\ell,\ell}$ & $AABABB$ & $\text{dec}(\underline{A}\underline{\overline{A}}\underline{B}\overline{A}\underline{\overline{B}}\overline{B})=(AABB, AABB)$\\
 		\hline
 		\hline
 		$P_2=P_{\ell,s}$ & $AABBBA$ &  $\text{dec}(\underline{A}\overline{\underline{A}}\underline{B}\overline{\underline{B}}\overline{B}\overline{A})=(AABB,ABBA)$\\
 		\hline
 		$P_3=P_{\ell,w}$ & $AABBAB$ &  $\text{dec}(\underline{A}\overline{\underline{A}}\underline{B}\overline{\underline{B}}\overline{A}\overline{B})=(AABB,ABAB)$\\
 		\hline
 		$P_4=P_{s,\ell}$ & $ABBBAA$ & $\text{dec}(\underline{A}\underline{B}\overline{\underline{B}}\overline{B}\overline{\underline{A}}\overline{A})=(ABBA,BBAA)$\\
 		\hline
 		$P_5=P_{s,s}$ & $ABBAAB$ &  $\text{dec}(\underline{A}\underline{B}\overline{\underline{B}}\overline{\underline{A}}\overline{A}\overline{B})=(ABBA,BAAB)$\\
 		\hline
 		$P_6=P_{s,w}$ & $ABBABA$ &  $\text{dec}(\underline{A}\underline{B}\overline{\underline{B}}\overline{\underline{A}}\overline{B}\overline{A})=(ABBA,BABA)$\\
 		\hline
 		$P_7=P_{w,\ell}$  & $ABAABB$ & $\text{dec}(\underline{A}\underline{B}\overline{\underline{A}}\overline{A}\overline{\underline{B}}\overline{B})=(ABAB,AABB)$\\
 		\hline
 		$P_8=P_{w,s}$ & $ABABBA$ & $\text{dec}(\underline{A}\underline{B}\underline{\overline{A}}\underline{\overline{B}}\overline{B}\overline{A})=(ABAB,ABBA)$\\
 		\hline
 		$P_9=P_{w,w}$ & $ABABAB$ & $\text{dec}(\underline{A}\underline{B}\underline{\overline{A}}\underline{\overline{B}}\overline{A}\overline{B})=(ABAB,ABAB)$\\
 		\hline
 	\end{tabular}
 	\caption{All possible patterns of two triples and their corresponding decompositions. The rightmost column reveals the left parent (underlined) and the right parent (overlined) within the pattern. The alternative notation $P_{xy}$ encodes the pattern's parents, where $\ell$ stands for an alignment, $s$ -- a stack, and $w$ -- a wave.}
 	\label{table:relations}
 \end{table}

Recall that an ordered $r$-matching whose all pairs of edges form the same, fixed pattern $P$ is called a $P$-clique.
 \begin{theorem}[\cite{DGR-match}]\label{Theorem E-S for triples}
 	Let $a_1, a_2,\ldots, a_9$ be arbitrary positive integers and let $n\ge\prod_{i=1}^{9}a_{i}+1$.  Then, every matching $M\in{\mathcal M}_n^{(3)}$ either contains a $P_1$-clique of size at least $(a_1+1)/2$, or a $P_{i}$-clique of size $a_{i}+1$, for some $i\in\{2,3,\dots,9\}$.
 \end{theorem}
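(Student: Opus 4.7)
The plan is to apply Theorem~\ref{Theorem E-S for LSW} three times: twice to the natural $2$-uniform projections of $M$ obtained by restricting each edge to its first two and its last two vertices, and once more to a third auxiliary projection used only to resolve the $P_1/P_1^*$ degeneracy.

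For each edge $e=\{e_1<e_2<e_3\}$ of $M$ put $e_L=\{e_1,e_2\}$ and $e_R=\{e_2,e_3\}$; this produces ordered $2$-matchings $M_L$ and $M_R$ of size $n$. A case check against the rightmost column of Table~\ref{table:relations} shows that the $2$-patterns of $(e_L,f_L)$ and $(e_R,f_R)$ are determined by the $3$-pattern of $(e,f)$, and that this decomposition map is injective on $\{P_1,\dots,P_9\}$; the only collision is that $P_1^*$ also decomposes as (alignment, alignment). Apply Theorem~\ref{Theorem E-S for LSW} to $M_L$ with parameters $(\ell_L,s_L,w_L)=(a_1a_2a_3,\,a_4a_5a_6,\,a_7a_8a_9)$; since $n\ge\prod_{i=1}^9 a_i+1=\ell_L s_L w_L+1$, we obtain a sub-matching $M^{(1)}\subseteq M$ of size $\ell_L+1$, $s_L+1$, or $w_L+1$ in which every pair of left parents forms the same $2$-pattern. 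Apply Theorem~\ref{Theorem E-S for LSW} a second time to $M^{(1)}_R$ with parameters $(a_1,a_2,a_3)$, $(a_4,a_5,a_6)$, or $(a_7,a_8,a_9)$ according to the first outcome --- the required size inequality holds by construction. The result is a sub-matching $M^{(2)}\subseteq M^{(1)}$ whose left and right parent patterns are both constant. In eight of the nine combinations the $3$-pattern of every pair in $M^{(2)}$ is uniquely determined and equal to some $P_i$ with $i\in\{2,\ldots,9\}$, producing a $P_i$-clique of size $a_i+1$ and finishing the proof in those cases.

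The outstanding case is when both parents are alignments, so that $|M^{(2)}|=a_1+1$ and every pair of edges forms either $P_1$ or $P_1^*$. Here I introduce a third projection: $e_{13}=\{e_1,e_3\}$. A direct check shows that in $M^{(2)}_{13}$ a $P_1$-pair appears as an alignment and a $P_1^*$-pair as a crossing; no pair is a nesting, because the right-parent alignment in $M^{(2)}$ forces $e_3<f_3$ whenever $e<f$. Moreover, three edges $e<f<g$ of $M^{(2)}$ cannot be pairwise $P_1^*$, since the two relations $e_3\in(f_1,f_2)$ and $f_3\in(g_1,g_2)$ imply $e_3<f_2<g_1$, contradicting the requirement $e_3\in(g_1,g_2)$. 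Hence $M^{(2)}_{13}$ contains no stack of size $2$ and no wave of size $3$. A final application of Theorem~\ref{Theorem E-S for LSW} with parameters $(\lfloor a_1/2\rfloor,\,1,\,2)$ --- admissible since $a_1+1\ge 2\lfloor a_1/2\rfloor+1$ --- rules out those two alternatives and yields a line of size $\lfloor a_1/2\rfloor+1\ge(a_1+1)/2$ in $M^{(2)}_{13}$, which is the desired $P_1$-clique in $M$.

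The main obstacle is exactly the $P_1/P_1^*$ degeneracy: the decomposition map is two-to-one at (alignment, alignment), and the factor $1/2$ in the $P_1$-bound is precisely what is sacrificed in the third Erd\H{o}s-Szekeres step, where $P_1^*$-pairs survive as crossings and force the parameter $w=2$. Everything else is bookkeeping designed to make the parameter product telescope to $\prod_{i=1}^9 a_i$.
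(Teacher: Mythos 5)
Your argument is correct, and its skeleton is exactly the one the paper uses: Theorem~\ref{Theorem E-S for triples} is obtained there as the $r=3$ case of Theorem~\ref{Theorem ESz General_classic}, whose inductive proof (via Theorem~\ref{Theorem ESz General_real}) first projects each edge onto its first two vertices (the left parents), applies Theorem~\ref{Theorem E-S for LSW} with the products $a_1a_2a_3$, $a_4a_5a_6$, $a_7a_8a_9$, then projects the resulting homogeneous sub-matching onto the last two vertices (the right parents) and applies Theorem~\ref{Theorem E-S for LSW} again; the injectivity of the decomposition outside the pair $(AABB,AABB)$ is the content of Proposition~\ref{Proposition Big Brother}. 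Where you genuinely diverge is the degenerate (alignment, alignment) case. The paper invokes Proposition~\ref{Proposition P-family-clique}: it analyzes where the third vertices of the edges of a $\{P_1,P_1^*\}$-clique must fall and extracts the odd-indexed edges, giving a $P_1$-clique of size $\lceil(a_1+1)/2\rceil$. You instead project onto $\{e_1,e_3\}$, observe that $P_1$-pairs become alignments, $P_1^*$-pairs become crossings, nestings are impossible, and no three edges are pairwise $P_1^*$, and then run Theorem~\ref{Theorem E-S for LSW} a third time with $(\ell,s,w)=(\lfloor a_1/2\rfloor,1,2)$ to force a line of size $\lfloor a_1/2\rfloor+1\ge(a_1+1)/2$. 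Both yield the same bound; your version has the aesthetic advantage of reducing everything to the $2$-uniform theorem, while the paper's alternate-edge argument generalizes directly to big brothers for all $r$ (which is what the general induction needs), whereas your third projection exploits the specific shape of $P_1^*$ for $r=3$.

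One small point to patch: Theorem~\ref{Theorem E-S for LSW} requires positive integer parameters, so your final application with $\ell=\lfloor a_1/2\rfloor$ is inadmissible when $a_1=1$; in that case, however, the required $P_1$-clique has size at least $(a_1+1)/2=1$ and is furnished by a single edge, so the conclusion holds trivially and the gap is purely cosmetic.
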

 \noindent As it turned out, this result is not optimal (see discussion in Section \ref{opt}).

 %In the statement of Theorem \ref{Theorem E-S for triples} we have not included a factor $a_1^*$ corresponding to pattern~$P_1^*$. Alternatively, we could have done so, obtaining the following minor strengthening. (Note that one may restrict to $a_1^*\in\{1,2\}$ only.)
%For  ease of notation, let us, just temporarily, rename $P_{10}:=P_1^*$ and $a_{10}:=a_1^*$.

%\begin{theorem}\label{Theorem E-S for triples^*}
 %	Let $a_i$, $i=1,\dots,10$,  be arbitrary positive integers and let $n\ge\prod_{i=1}^{10}a_{i}+1$,  Then, every ordered 3-uniform matching $M$ on $3n$ vertices  contains, for some $i$, a $P_i$-clique of size  $a_{i}+1$.
% \end{theorem}
% \noindent To deduce Theorem \ref{Theorem E-S for triples} take a matching $M$ of $[3n]$ with $n\ge\prod_{i=1}^{9}a_{i}+1$ and double it by adding the same (i.e., order isomorphic) matching on the set $[3n+1,\dots,6n]$. Denote this matching by $MM$ and note that its size is $2n>\prod_{i=1}^{10}a_{i}+1$ with $a_{10}=2$. Thus, we may apply Theorem \ref{Theorem E-S for triples^*} to $MM$ and, as a result, since there are no $P_{10}$-cliques of size three,  obtain, for some $i\in\{1,\dots,9\}$, a $P_i$-clique in $MM$ of size $a_i+1$. If $i=1$, then at least $(a_1+1)/2$ vertices of the $P_1$-clique are in $M$. If $i\ge2$, then the entire $P_i$-clique must be in one (and thus in each) copy of $M$.

\subsection{Collectable patterns and the main result}
In this subsection we describe our new results about unavoidable sub-structures in \emph{every} matching $M\in{\mathcal M}_n^{(r)}$. (For results about \emph{random} $r$-matchings see the next subsection.) The main goal of this paper is to generalize Theorems \ref{Theorem E-S for LSW} and \ref{Theorem E-S for triples} to arbitrary $r\ge 2$. %We chose to do it in the spirit of Theorem \ref{Theorem E-S for triples}, rather than Theorem  \ref{Theorem E-S for triples^*}, as the other alternative would be too complicated and, in fact, not bringing any truly new results.

In order to state our main result we need to introduce some terminology and notation. First we need to recognize patterns that allow for constructing arbitrarily large cliques. A pattern $P$ is called \emph{collectable} if for every $k\ge2$ it is possible to build a $P$-clique of size $k$. It turns out that collectable patterns are easily characterized in terms of their representing words.

   \emph{A block} in a word is a segment of consecutive letters of any length. A block consisting of the same letter is called a \emph{run} (it does not need to be maximal, though). A run $AA\cdots A$ of length $k$ is often denoted compactly by $A^k$ and called an \emph{$A$-run} of \emph{length} $k$.

  A pattern $P$ is \emph{splittable} if it can be split into a number of blocks each consisting of an $A$-run and a $B$-run  of the same length. For example,  $AABBABBA$ is splittable, because it splits into three such blocks: $|AABB|AB|BA|$, while $AABABB$ is not.

 It is easy to see that any splittable pattern is collectable. For instance, if $P=|AABB|AB|BA|$, then for every $k\ge 2$,  matching $$|A_1A_1A_2A_2\cdots A_kA_k|A_1A_2\cdots A_k|A_k\cdots A_2 A_1|$$
 is a $P$-clique of size $k$.
 Indeed, one can see that for every $1\le i<j\le k$, letters $A_i$ and $A_j$ do form pattern $P$.

 In Subsection~\ref{char} we show that a pattern is collectable if and only if it is splittable. It follows,  by means of some elementary enumeration, that there are precisely $3^{r-1}$ collectable $r$-patterns for every $r\ge2$.
 In particular,  for $r=2$ all three patterns are collectable, for $r=3$ all but one ($P_1^*$), while for $r=4$, out of 35 patterns, 27 are collectable (see Table \ref{table:patterns4} in Subsection \ref{pat_decomp}, where suitable partitions are shown).

 While  generalizing Theorems \ref{Theorem E-S for LSW} and \ref{Theorem E-S for triples} one has to accommodate the deficiencies  similar to the sole  constant $1/2$ in Theorem \ref{Theorem E-S for triples}. For this sake, we look at the very end of each pattern.
  %For instance, when $r=4$, there is a factor of $1/4$ attached to the pattern $AAAABBBB$ and $1/2$ attached to two other patterns, and this process keeps going on  with $r$ growing. Nevertheless, in the next theorem we managed to characterize the unlucky patterns and the amount of sacrifice they  are subject to.
 The \emph{maturity} of a collectable pattern equals the length of its last \emph{maximal} run minus $2$, unless this value is negative, in which case we set it to $0$. The maturity of a pattern $P$ will be denoted by $m(P)$. For instance,  $m(AAAABBBB)=4-2=2$, while $m(AABBBBAA)=m(ABABABAB)=0$.

 We may now formulate our main result. An $r$-matching is called \emph{clean} if every pair of its edges forms a collectable pattern.

 \begin{theorem}\label{Theorem ESz General_classic}
	For $r\ge 2$, let $P_1,\dots,P_{3^{r-1}}$ be all collectable $r$-patterns and let positive integers $a_1,\dots, a_{3^{r-1}}$ be given. If $n\ge\prod_{i=1}^{3^{r-1}}a_i+1$, then
\begin{itemize}
\item[(a)] every matching $M\in{\mathcal M}_n^{(r)}$ contains a $P_i$-clique of size greater than $a_i2^{-m(P_i)}$, for some $i\in [3^{r-1}]$;
\item[(b)] every clean matching $M\in{\mathcal M}_n^{(r)}$ contains a $P_i$-clique of size at least $a_i+1$, for some $i\in [3^{r-1}]$.
\end{itemize}
\end{theorem}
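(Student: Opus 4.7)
The plan is to prove both parts by induction on $r\ge 2$. The base case $r=2$ is precisely Theorem~\ref{Theorem E-S for LSW}, since all three $2$-patterns are collectable of maturity $0$, so parts~(a) and~(b) coincide there. For the inductive step, starting from $M\in\mathcal{M}_n^{(r)}$, I pass to the \emph{left parent} $L(M)\in\mathcal{M}_n^{(r-1)}$, obtained by deleting the rightmost vertex of every edge. A short check on the splittable decomposition of patterns shows that $L$ maps collectable $r$-patterns to collectable $(r-1)$-patterns, and that each collectable $(r-1)$-pattern $Q$ has exactly three collectable $r$-pattern extensions: the lengthening $P^L_Q$ of $Q$'s last block, and the appendings $P^A_Q$, $P^B_Q$ of a fresh $AB$- or $BA$-block. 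This accounts for the recursion $3^{r-1}=3\cdot 3^{r-2}$ and lets me partition the thresholds by setting $b_Q := a_L^Q\cdot a_A^Q\cdot a_B^Q$, with $\prod_Q b_Q = \prod_i a_i$.

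Given a $Q$-clique in $L(M)$ obtained by induction, I let $M'\subseteq M$ be the corresponding sub-$r$-matching and build the auxiliary $2$-matching $N$ whose edges are $\{x_e, v_e\}$ for $e\in M'$, where $x_e$ is the $(r-1)$-st vertex of $e$ and $v_e$ is its last vertex. A case analysis on $Q$ would establish the key dictionary: when $M$ is clean, the $2$-pattern of each pair in $N$ uniquely determines the $r$-pattern of the corresponding pair in $M'$, and the three $2$-patterns (alignment, nesting, crossing) in $N$ correspond bijectively to $P^L_Q$, $P^A_Q$, $P^B_Q$ in $M'$ (up to a $Q$-dependent swap of the two appendings). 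Applying Theorem~\ref{Theorem E-S for LSW} to $N$ with thresholds $a_L^Q, a_A^Q, a_B^Q$ then produces the desired monochromatic $r$-pattern clique.

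Part~(b) follows because cleanliness passes from $M$ to $L(M)$ (no collectable $r$-pattern has a non-collectable left parent), so the induction yields a $Q$-clique of size $b_Q+1$ and the reduction above gives a $P_i$-clique of size $a_i+1$. For part~(a), the alignment class in $N$ may mix $P^L_Q$ with up to $m(P^L_Q)$ ``dirty'' non-collectable subcases, one per possible position of $v_e$ inside $L(f)$'s last maximal run. I would then observe that for each fixed dirty subcase, the pairs it induces on the alignment sub-matching of $N$ form a graph of out-degree at most $1$ (since the gaps in $L(f)$'s last run are pairwise disjoint across distinct $f$'s, so each $v_e$ falls in at most one such gap); being a DAG, this underlies a forest and hence admits an independent set of at least half its vertices. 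Iterating across the $m(P^L_Q)$ dirty subcases loses a factor of $2^{m(P^L_Q)}$ and leaves a pure $P^L_Q$-clique of size strictly exceeding $a_L^Q \cdot 2^{-m(P^L_Q)}$, matching the bound in~(a).

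The hard part will be controlling how the deficiency factors propagate through the induction: a naive application of part~(a) to $L(M)$ would inflate each $b_Q$ by $2^{m(Q)}$ to compensate for the previous-level loss, creating a cumulative overcount $2^{\sum_Q m(Q)}$ that exceeds the target threshold $\prod_i a_i + 1$. The resolution is to exploit the key identity $m(P^L_Q) = m(Q) + 1$ (whenever the last maximal run of $Q$ already has length at least $2$), which causes the deficiencies from all inductive levels to telescope: the extra factor of $2$ introduced by dirty-removal at each step absorbs exactly the increase in maturity, so the cumulative loss collapses to the single $2^{m(P_i)}$ appearing in the conclusion, without inflating the threshold.
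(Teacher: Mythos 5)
Your overall architecture coincides with the paper's: induct on $r$, delete the last vertex of every edge, identify the three collectable children of each collectable $(r-1)$-pattern $Q$ (lengthening of the last block versus appending $AB$ or $BA$), apply the $r=2$ theorem to the $2$-matching of last two vertices, and treat the alignment branch as the only delicate one; your part (b) is essentially the paper's argument. The genuine gap is in the quantitative handling of the ``dirty'' (sibling) patterns in part (a). Your mechanism removes them one sibling at a time, halving the alignment class at each of the $m(P^L_Q)$ subcases, i.e.\ losing $2^{m(P^L_Q)}$ at a \emph{single} level. That is incompatible with the telescoping you invoke in your last paragraph, which tacitly assumes the per-level loss is a single factor of $2$. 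Concretely: the inductive application of part (a) at level $r-1$ already delivers only a $Q$-clique of size $>b_Q2^{-m(Q)}$, so after your per-sibling purge the surviving pure $P^L_Q$-clique has size only $>a^Q_L\,2^{-m(Q)-m(P^L_Q)}=a^Q_L\,2^{-(2m(Q)+1)}$, short of the claimed $a^Q_L\,2^{-m(P^L_Q)}$ whenever $m(Q)\ge1$; iterated over all levels (e.g.\ for the line $A^rB^r$) the cumulative deficiency becomes $2^{\Theta(r^2)}$ instead of $2^{r-2}$. Asserting that ``the extra factor of 2 at each step absorbs the increase in maturity'' does not repair this, because your step does not lose a factor of $2$ --- it loses $2^{m(P^L_Q)}$.

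What is missing is the paper's Proposition~\ref{Proposition P-family-clique}: in a $P$-family-clique with edges $e_1<\dots<e_k$ (by left ends), \emph{every} pair $e_i,e_j$ with $j\ge i+2$ automatically forms the big brother $P$, because the appended last vertex of $e_i$ must land between the ends of the runs of $A_i$ and $A_{i+1}$ in the common final block; hence taking every other edge kills all siblings simultaneously at the cost of one factor of $2$ per level, and $m(P^L_Q)=m(Q)+1$ then telescopes exactly to $2^{m(P_i)}$. Your own observation that each $e_i$ can be dirty with at most one later edge is in fact strong enough, \emph{provided} you apply it to the union of all dirty relations at once (a forest, hence an independent set of half the vertices) rather than subcase by subcase. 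Finally, note how the paper absorbs the inherited deficiency: it does not inflate the thresholds $b_Q$ (your ``cumulative overcount'' worry), but instead works with a real-parameter reformulation (Theorem~\ref{Theorem ESz General_real}) and divides only the alignment threshold $z_1=x_{3(i-1)+1}/2^{m(Q_i)}$ in the $r=2$ application, so the product constraint $q_1>z_1z_2z_3$ still holds and the single halving from the family-clique step lands precisely on $2^{m(P_i)}$.
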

\noindent Note that for $r=2$ and $r=3$, we retain, respectively, Theorems \ref{Theorem E-S for LSW} and \ref{Theorem E-S for triples}, the latter with an additional statement (b), where \emph{clean} just means that no pair of edges forms pattern $P_1^*$. As shown in Section \ref{opt}, Theorem~\ref{Theorem ESz General_classic} is not optimal.

\subsection{Corollaries} In this subsection we gather some easy corollaries of Theorem \ref{Theorem ESz General_classic} and prove them right away.

It can be routinely calculated (see Subsection \ref{char}) that the total maturity of all $r$-patterns equals
 \begin{equation}\label{summPi}
 \gamma_r:=\sum_{i=1}^{3^{r-1}}m(P_i)=\tfrac12\left(3^{r-2}-1\right).
  \end{equation}
  Thus, one can evenly redistribute the deficiencies $2^{-m(P_i)}$ occurring in Theorem \ref{Theorem ESz General_classic}(a) among \emph{all} collectable patterns. To avoid minor technical issues with integrality, we state it in the following form.

  \begin{cor}\label{ESz General_evenly}
	For $r\ge 2$, let $P_1,\dots,P_{3^{r-1}}$ be all collectable $r$-patterns and let positive integers $b_1,\dots, b_{3^{r-1}}$ be given. If $n>2^{\gamma_r}\prod_{i=1}^{3^{r-1}}b_i$, then
 every matching $M\in{\mathcal M}_n^{(r)}$ contains a $P_i$-clique of size at least $b_i+1$, for some $i\in [3^{r-1}]$.
 \end{cor}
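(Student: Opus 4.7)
The plan is to deduce this directly from Theorem~\ref{Theorem ESz General_classic}(a) by choosing the parameters $a_i$ so that each deficiency factor $2^{-m(P_i)}$ is exactly absorbed. Specifically, for each $i \in [3^{r-1}]$ I would set
\[
a_i := b_i \cdot 2^{m(P_i)}.
\]
With this choice one has $a_i 2^{-m(P_i)} = b_i$, so that a $P_i$-clique of size strictly greater than $a_i 2^{-m(P_i)}$ is automatically a $P_i$-clique of size at least $b_i+1$, since $b_i$ is a positive integer. Note also that each $a_i$ is itself a positive integer, so the hypotheses of Theorem~\ref{Theorem ESz General_classic} are met.

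It then remains to verify the product bound. Using (\ref{summPi}),
\[
\prod_{i=1}^{3^{r-1}} a_i \;=\; \left(\prod_{i=1}^{3^{r-1}} b_i\right) \cdot 2^{\sum_{i} m(P_i)} \;=\; 2^{\gamma_r} \prod_{i=1}^{3^{r-1}} b_i.
\]
The assumption $n > 2^{\gamma_r}\prod_i b_i$ therefore gives $n \ge \prod_i a_i + 1$, which is exactly the hypothesis required to invoke Theorem~\ref{Theorem ESz General_classic}(a). The conclusion of that theorem yields some $i \in [3^{r-1}]$ for which $M$ contains a $P_i$-clique of size greater than $a_i 2^{-m(P_i)} = b_i$, i.e.\ of size at least $b_i+1$, which is precisely the claim. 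There is no genuine obstacle here; the only thing to be a little careful about is that the strict inequality ``greater than $a_i 2^{-m(P_i)}$'' in Theorem~\ref{Theorem ESz General_classic}(a) does translate to ``$\ge b_i+1$'' after our substitution, which it does because the right-hand side equals the integer $b_i$ exactly.
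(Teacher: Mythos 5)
Your proposal is correct and follows essentially the same route as the paper: set $a_i=2^{m(P_i)}b_i$, use \eqref{summPi} to verify $\prod_i a_i=2^{\gamma_r}\prod_i b_i<n$, and invoke Theorem~\ref{Theorem ESz General_classic}(a) to get a $P_i$-clique of size greater than $b_i$, hence at least $b_i+1$. Your extra remark on integrality of the $a_i$ and on the strict inequality translating to $\ge b_i+1$ is a fine (if routine) check that the paper leaves implicit.
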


\proof Set $a_i:=2^{m(P_i)}b_i$, $i=1,\dots,3^{r-1}$. Then
$$\prod_{i=1}^{3^{r-1}}a_i\overset{\eqref{summPi}}{=}2^\gamma\prod_{i=1}^{3^{r-1}}b_i<n$$ by assumption, and so, by  Theorem~\ref{Theorem ESz General_classic}(a), every matching $M\in{\mathcal M}_n^{(r)}$ contains, for some $i$, a $P_i$-clique of size greater than $a_i2^{-m(P_i)}=b_i$. \qed

\medskip

One context in which clean $r$-matchings emerge is that of $r$-partiteness. An ordered $r$-matching $M$ of size $k$ is \emph{$r$-partite} if, after splitting  its vertex set  into $r$ consecutive blocks   of size $k$,
 every edge of $M$ contains one vertex from each block.
Note that the only patterns possible in an $r$-partite $r$-matching are those which are themselves $r$-partite. These patterns can be characterized as splittable patterns with each block being either $AB$ or $BA$. Consequently, there are exactly $2^{r-1}$ of them.
For instance, for $r=2$, both  nesting and  crossing are bipartite, while
for $r=3$, only patterns $P_5,P_6,P_8,P_9$ (as listed in Table \ref{table:relations}) are tripartite.
Moreover, an $r$-partite $r$-matching is, indeed, clean.
Setting $a_i=1$ for each non-$r$-partite pattern $P_i$, we thus obtain the following corollary of Theorem~\ref{Theorem ESz General_classic}(b). Its optimality follows by a construction given in Section \ref{opt}.

\begin{cor}\label{Theorem ESz General_partite}
	For $r\ge 2$, let $Q_1,\dots,Q_{2^{r-1}}$ be all $r$-partite patterns and let positive integers $a_1,\dots, a_{2^{r-1}}$ be given. If $n\ge\prod_{i=1}^{2^{r-1}}a_i+1$, then every $r$-partite matching $M\in{\mathcal M}_n^{(r)}$ contains a $Q_i$-clique of size at least $a_i+1$, for some $i\in[2^{r-1}]$. Moreover, this is not true for $n=\prod_{i=1}^{2^{r-1}}a_i$. \qed
 \end{cor}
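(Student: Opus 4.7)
The plan is to derive the upper bound directly from Theorem \ref{Theorem ESz General_classic}(b) by extending the list of $a_i$'s with ones on the non-$r$-partite collectable patterns, and to defer the optimality construction to Section \ref{opt}.

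First I would record two observations. \emph{(i)} An $r$-partite matching $M\in\mathcal{M}_n^{(r)}$ is clean: any two edges of $M$ select one vertex from each of the $r$ consecutive blocks of size $n$, so the pattern they form is itself $r$-partite, hence splittable into length-two blocks, each $AB$ or $BA$; in particular it is collectable. \emph{(ii)} Every $r$-partite pattern $Q$ satisfies $m(Q)=0$, because its last block is $AB$ or $BA$, so its final maximal run has length exactly~$1$, and $\max\{0,1-2\}=0$.

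For the upper bound, enumerate all collectable $r$-patterns as $P_1,\dots,P_{3^{r-1}}$, arranged so that $P_1,\dots,P_{2^{r-1}}$ are the $r$-partite patterns $Q_1,\dots,Q_{2^{r-1}}$. Extend the data by setting $a_i:=1$ for $i>2^{r-1}$; then $\prod_{i=1}^{3^{r-1}}a_i=\prod_{i=1}^{2^{r-1}}a_i$, so the hypothesis of Theorem \ref{Theorem ESz General_classic}(b) is satisfied. Since $M$ is clean by (i), that theorem delivers, for some index $i$, a $P_i$-clique of size at least $a_i+1$. If $i>2^{r-1}$, this is a $P_i$-clique of size at least $2$, i.e., a pair of edges of $M$ forming a non-$r$-partite pattern---impossible in an $r$-partite matching. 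Hence $i\le 2^{r-1}$, and we have the desired $Q_i$-clique of size $\ge a_i+1$; note that here observation (ii) is crucial so that part (b), and not the weaker part (a), of the theorem is available.

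For the optimality, one must exhibit, for every choice of positive integers $a_1,\dots,a_{2^{r-1}}$, an $r$-partite matching of size $n=\prod_{i=1}^{2^{r-1}}a_i$ containing no $Q_i$-clique of size $a_i+1$ for any $i$. The plan is a recursive ``product'' construction that stitches together building blocks tuned to each $r$-partite pattern; the details are carried out in Section \ref{opt}. The bookkeeping for the upper bound is routine, so I expect the main obstacle to be this extremal construction, which must simultaneously avoid oversized cliques in all $2^{r-1}$ partite patterns, and whose combinatorial complexity grows quickly with $r$.
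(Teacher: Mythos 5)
Your derivation of the Ramsey half is exactly the paper's argument: an $r$-partite matching is clean, every pattern it contains is $r$-partite, so padding the list with $a_i=1$ on the non-$r$-partite collectable patterns and invoking Theorem~\ref{Theorem ESz General_classic}(b) forces the clique to live on one of the $Q_i$'s (a non-partite clique of size $2$ being impossible). One small correction there: your observation~(ii) is not what makes part~(b) available --- part~(b) needs only cleanliness, i.e.\ your observation~(i); the maturity of the $r$-partite patterns is irrelevant to it (it would only matter if you were forced to fall back on part~(a)).

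The genuine gap is the ``moreover'' clause. You only announce a ``recursive product construction'' and defer its details, so half of the statement is left unproven. The paper does carry this out, in Section~\ref{opt}, via the iterated blow-up of $P$-cliques: take a $Q_i$-clique $K_i$ of size $a_i$ for each of the $2^{r-1}$ $r$-partite patterns and form the nested blow-up $K_{1}[K_{2}[\cdots[K_{2^{r-1}}]\cdots]]$ (for $r=3$ this is $K_5[K_6[K_8[K_9]]]$). The two facts that make this work, and that your sketch would have to verify, are: a blow-up $M[N]$ with $N$ $r$-partite is $M$-inheritable, so every pair of edges coming from different copies of $N$ forms the same pattern as the corresponding pair in $M$ (and then $M[N]$ is again $r$-partite when $M$ is); and the pattern sets of the successive levels are disjoint, so the largest $Q_i$-clique in the blow-up has size exactly $a_i$ rather than multiplying across levels, while the total size is $\prod_{i=1}^{2^{r-1}}a_i$. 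Without establishing inheritance, a product construction could a priori create larger homogeneous cliques mixing edges from different levels, which is precisely what must be ruled out. (The paper also remarks on an alternative route to the positive half, bypassing Theorem~\ref{Theorem ESz General_classic} entirely: view an $r$-partite $r$-matching as an $(r-1)$-tuple of permutations and apply the original Erd\H{o}s--Szekeres theorem $r-1$ times.)
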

\noindent The above result illustrates the following principle: setting  $a_i=1$ for any pattern absent from a matching results in more ``prey'' for the remaining  patterns. One should also remark that Corollary~\ref{Theorem ESz General_partite} follows easily by $r-1$ applications of the original Erd\H os-Szekeres Theorem on permutations. Indeed, an $r$-partite $r$-matching of size $n$ can be viewed as an $(r-1)$-tuple of permutations of order $n$, while $P$-cliques, in this setting, correspond to $(r-1)$-tuples of monotone subsequences thereof.

%\begin{proof}[Proof of Corollary  \ref{Theorem ESz General_partite}] Let us reorder, if necessary, the patterns $P_i$ so that $P_i=Q_i$ for $i=1,\dots,2^{r-1}$. Then set $a_i=b_i$ for every $i\le 2^{r-1}$ and $a_i=1$ for the remaining $i$. The corollary follows from Theorem \ref{Theorem ESz General_classic}(b).
%\end{proof}

 Theorem \ref{Theorem ESz General_classic} and its corollaries can be interpreted in terms of edge-colorings of a complete graph. As mentioned above, we may associate with every matching $M\in{\mathcal M}_n^{(r)}$ an edge colored clique $K_n$, whose vertices are the edges of $M$ and colors (of the edges of $K_n$) correspond to patterns occurring in $M$. %Of course, only the $3^{r-1}$ colors, corresponding to collectable patterns, can form cliques of order larger than two.
 It follows from known bounds on Ramsey numbers that a monochromatic clique of order $\Omega(\log n)$ is guaranteed (see, e.g.,~\cite{ConlonFerber}), however, in this setting much larger cliques are present: of order $\Omega(n^{2^{1-r}})$ in  $r$-partite matchings and  $\Omega(n^{3^{1-r}})$ in general and clean matchings.

\begin{cor}\label{Theorem ESz General_diagonal}
	For $r\ge 2$, let $P_1,\dots,P_{3^{r-1}}$ be all collectable patterns and $Q_1,\ldots, Q_{2^{r-1}}$ be the $r$-partite patterns among them.
\begin{itemize}
\item[(a)] There is a constant $C_r>0$ such that every matching $M\in{\mathcal M}_n^{(r)}$ contains a $P_i$-clique of size at least $C_rn^{3^{1-r}}$, for some $i\in [3^{r-1}]$.
\item[(b)] Every clean matching $M\in{\mathcal M}_n^{(r)}$ contains a $P_i$-clique of size at least $n^{3^{1-r}}$, for some $i\in [3^{r-1}]$.
\item[(c)] Every $r$-partite matching $M\in{\mathcal M}_n^{(r)}$ contains a $Q_i$-clique of size greater than $n^{2^{1-r}}$, for some $i\in[2^{r-1}]$.
\end{itemize}
 \end{cor}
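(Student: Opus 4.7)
The plan is to obtain all three parts by taking the input parameters in Theorem~\ref{Theorem ESz General_classic}, Corollary~\ref{ESz General_evenly} and Corollary~\ref{Theorem ESz General_partite} to be all equal (the ``diagonal'' specialization), and then to pick the common value as large as the corresponding product constraint allows. The substantive combinatorial work has already been carried out in those results; what remains is an elementary optimization together with a careful handling of integrality.

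For part~(c), I would apply Corollary~\ref{Theorem ESz General_partite} with $a_1=\cdots=a_{2^{r-1}}=:a$, choosing $a$ to be the largest positive integer satisfying $a^{2^{r-1}}<n$. The hypothesis $n\ge a^{2^{r-1}}+1$ is then met, so $M$ contains a $Q_i$-clique of size at least $a+1$; by maximality $(a+1)^{2^{r-1}}\ge n$, hence $a+1\ge n^{2^{1-r}}$, giving the desired estimate.

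For part~(b), the same argument applied to Theorem~\ref{Theorem ESz General_classic}(b) with all $a_i=a$ and $a$ the largest integer satisfying $a^{3^{r-1}}<n$ yields a $P_i$-clique of size at least $a+1\ge n^{3^{1-r}}$.

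For part~(a) the deficiency factors $2^{-m(P_i)}$ in Theorem~\ref{Theorem ESz General_classic}(a) destroy a naive diagonal: setting the $a_i$ equal would force us to take $a_i\ge 2^{m(P_i)}$ to get a useful conclusion, which is precisely the redistribution already built into Corollary~\ref{ESz General_evenly}. I would therefore invoke Corollary~\ref{ESz General_evenly} with $b_1=\cdots=b_{3^{r-1}}=:b$, taking $b$ the largest positive integer with $2^{\gamma_r}b^{3^{r-1}}<n$, where $\gamma_r=\tfrac12(3^{r-2}-1)$ is the total maturity from~\eqref{summPi}. The hypothesis $n>2^{\gamma_r}b^{3^{r-1}}$ is satisfied, producing a $P_i$-clique of size at least
\[
b+1\ \ge\ (n/2^{\gamma_r})^{3^{1-r}}\ =\ C_r\,n^{3^{1-r}},
\qquad\text{with}\qquad C_r:=2^{-\gamma_r/3^{r-1}}.
\]
There is no real obstacle here beyond bookkeeping: the only subtlety is that the ``greater than'' in part~(c) and the ``at least'' in parts~(a),(b) come out of strict vs.\ weak maximality of the chosen integer, and both are handled uniformly by the above floor-type choice of $a$ or $b$. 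The crucial preparatory step is the one already performed in deriving Corollary~\ref{ESz General_evenly} from Theorem~\ref{Theorem ESz General_classic}(a), which reduces each of (a), (b), (c) to a single-parameter diagonal optimization.
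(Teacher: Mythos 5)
Your proposal is correct and is essentially the paper's own argument: set all parameters equal, take the largest admissible integer (equivalently $\lceil\cdot\rceil-1$), read off the bound, and route part (a) through Corollary~\ref{ESz General_evenly}; moreover your constant $C_r=2^{-\gamma_r/3^{r-1}}$ is in fact the right one, since the paper's printed choice $C_r=2^{\gamma_r/3^{r-1}}$ would not satisfy the hypothesis $n>2^{\gamma_r}\prod_i b_i$. The only caveat -- shared by the paper's proof -- is that in part (c) the argument yields a clique of size $a+1=\lceil n^{2^{1-r}}\rceil$, which is strictly greater than $n^{2^{1-r}}$ only when $n^{2^{1-r}}$ is not an integer (and by the optimality construction in Corollary~\ref{Theorem ESz General_partite} strictness indeed cannot hold at perfect powers), and for very small $n$ (e.g.\ $n\le 2^{\gamma_r}$) your maximal $b$ is undefined, where one falls back on the convention that a single edge forms a $P$-clique.
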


\begin{proof} For (a),  recall the definition of $\gamma_r$ in~\eqref{summPi} and set
$C_r:=2^{\gamma_r/3^{r-1}}$
 and $b_i=\lceil C_rn^{3^{1-r}}\rceil-1$, $i=1,\dots,3^{r-1}$. Then, $n>2^{\gamma_r}\prod_{i=1}^{3^{r-1}}b_i$ and, by Corollary~\ref{ESz General_evenly},   every $M\in{\mathcal M}_n^{(r)}$ contains, for some $i$, a $P_i$-clique of size greater than $b_i$, that is, at least $b_i+1\ge C_rn^{3^{1-r}}$.
 For (b), setting $a_i=\lceil n^{3^{1-r}}\rceil-1$, we deduce the conclusion from Theorem \ref{Theorem ESz General_classic}(b). Similarly, part (c) follows  from  Corollary \ref{Theorem ESz General_partite} by setting $a_i=\lceil n^{2^{1-r}}\rceil-1$.
\end{proof}

 \subsection{Random matchings}

In the second part of this paper we consider a \emph{random}  $r$-matching $\rm^{(r)}_{n}$, that is, a matching picked uniformly at random from the family ${\mathcal M}_n^{(r)}$ of all $\frac{(rn)!}{(r!)^n\, n!}$
$r$-matchings on the same vertex set~$[rn]$. We say that an event ${\mathcal A}_n$ holds \emph{asymptotically almost surely}, or shortly, \emph{a.a.s.}, if $\Pr({\mathcal A}_n)\to1$, as $n\to\infty$.

With respect to the  Erd\H os-Szekeres Theorem on monotone subsequences, it is well-known that a.a.s.\ a random permutation contains both, an increasing and a decreasing subsequence, of order $\Theta(\sqrt n)$. This, roughly, matches the order of a monotone subsequence guaranteed in every permutation. In \cite[Theorem 12 and Proposition 11]{DGR-match} we gave a simple proof of a similar result about  random 2-matchings. (For much sharper versions of this result see \cite{BaikRains} and \cite{JSW}.)

\begin{thm}[\cite{DGR-match}]\label{thm:random2}
A.a.s.\ the size of the largest line,  stack, and  wave  in a random matching $\rm_n^{(2)}$ is~$\Theta(\sqrt n)$.
\end{thm}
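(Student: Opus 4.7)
The plan is to prove matching upper and lower bounds of order $\sqrt n$ on the size of the largest line, stack, and wave in $\rm_n^{(2)}$. Both bounds follow from moment-method arguments that treat all three patterns on equal footing.

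For the upper bound, fix $P\in\{\text{line},\text{stack},\text{wave}\}$ and let $X_k^{(P)}$ count the $P$-cliques of size $k$ in $\rm_n^{(2)}$. The key combinatorial observation is that each $2k$-element subset of $[2n]$ carries \emph{exactly one} $P$-clique of size $k$, since $P$ uniquely prescribes how to pair up the $2k$ vertices in sorted order. Because a fixed $k$-matching $M_0$ is contained in $\rm_n^{(2)}$ with probability
\[
\frac{|\{M\in\mathcal{M}_n^{(2)}: M\supseteq M_0\}|}{|\mathcal{M}_n^{(2)}|} = \frac{2^k\,n!\,(2n-2k)!}{(n-k)!\,(2n)!},
\]
we obtain
\[
\E[X_k^{(P)}] = \binom{2n}{2k}\cdot\frac{2^k\,n!\,(2n-2k)!}{(n-k)!\,(2n)!} = \frac{2^k\,n!}{(n-k)!\,(2k)!} \le \left(\frac{e^2 n}{2k^2}\right)^k
\]
by Stirling's formula. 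For $k = \lceil C\sqrt n\rceil$ with any constant $C > e/\sqrt 2$ the expectation tends to $0$, so Markov's inequality gives the $O(\sqrt n)$ upper bound.

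For the lower bound, I take $k = \lfloor c\sqrt n\rfloor$ with $c > 0$ sufficiently small, so the same identity yields $\E[X_k^{(P)}]\to\infty$. I then apply the Paley--Zygmund inequality $\Pr(X_k^{(P)} > 0) \ge \E[X_k^{(P)}]^2/\E[(X_k^{(P)})^2]$ and expand $\E[(X_k^{(P)})^2]$ as a sum over ordered pairs $(S_1,S_2)$ of $P$-cliques of size $k$, grouping by the number $j$ of edges they share. The crucial structural fact is that for each of the three patterns, \emph{any sub-collection of a $P$-clique is itself a $P$-clique of the same type}, so the event $\{S_1\cup S_2\subseteq\rm_n^{(2)}\}$ simplifies to a single $(2k-j)$-matching being contained in $\rm_n^{(2)}$, with probability $\tfrac{2^{2k-j}\,n!\,(2n-4k+2j)!}{(n-2k+j)!\,(2n)!}$ depending only on $j$. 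After summing over $j$ with the appropriate combinatorial weights, the $j=0$ contribution equals $(1+o(1))\E[X_k^{(P)}]^2$, while the $j\ge 1$ contributions are shown to be altogether $O(c^2)\cdot\E[X_k^{(P)}]^2$, so a.a.s.\ $X_k^{(P)} > 0$.

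The main obstacle is the enumeration, separately for each pattern $P$, of ordered pairs of size-$k$ $P$-cliques sharing exactly $j$ edges. A size-$j$ sub-$P$-clique sits in $\binom{k}{j}$ positions within a parent $P$-clique, and the number of extensions to a full size-$k$ $P$-clique depends on how $P$ constrains the placement of new edges: for a line, new edges distribute freely among $j+1$ gaps between existing intervals; for a stack, they sit either inside the innermost or outside the outermost edge; for a wave, they must respect the pairwise crossing pattern. In each case, however, a direct count shows the resulting contribution to $\E[(X_k^{(P)})^2]$ scales like $c^{2j}$ relative to the leading $j=0$ term, so the argument is robust across patterns. As alternatives, lines admit an elementary concentration proof based on counting within-block edges in a partition of $[2n]$ into $\Theta(\sqrt n)$ intervals, and stacks/waves can be attacked via an Erd\H os--Szekeres analysis of the sequence of right endpoints sorted by left endpoint, but the second-moment approach gives a single unified treatment.
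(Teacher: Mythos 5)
Your upper bound is correct and coincides with the paper's first-moment argument (Lemma~\ref{lem:upbr} specialized to $r=2$). The lower bound, however, has a genuine gap: the key claim that the overlapping ($j\ge 1$) terms of $\E[(X_k^{(P)})^2]$ total $O(c^2)\,\E[X_k^{(P)}]^2$ is false, already for lines. With $k=\lfloor c\sqrt n\rfloor$, the number $f(e)$ of size-$k$ line-cliques having a fixed pair $e=\{a,b\}$ with $b-a-1=g$ among their edges is about $\tfrac12\binom{2n-2-g}{2k-2}\approx\binom{2n}{2k}\tfrac{k^2}{n^2}e^{-\Theta(gk/n)}$ (the factor $\tfrac12$ coming from the parity constraint on the position of $a$). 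Summing $f(e)^2$ over all pairs $e$, the sum over $g$ contributes a factor $\Theta(n/k)$, giving $\sum_e f(e)^2=\Theta\bigl(\binom{2n}{2k}^2k^3/n^2\bigr)$; since the containment probabilities in your formula satisfy $p_{2k-1}/p_k^2=\Theta(n)$, where $p_m$ is the probability that a fixed $m$-matching lies in $\rm^{(2)}_n$, the $j=1$ contribution alone is $\Theta\bigl((k^3/n)\,\E[X_k]^2\bigr)=\Theta(c^3\sqrt n)\,\E[X_k]^2\to\infty$. A similar polynomial divergence (of order $n^{1/4}$) occurs for waves. Hence $\E[X_k^2]/(\E X_k)^2\to\infty$, Paley--Zygmund yields only a vanishing lower bound on $\Pr(X_k>0)$, and ``a.a.s.\ $X_k>0$'' does not follow: this is the standard failure of the vanilla second-moment method for longest-increasing-subsequence-type statistics, driven by pairs of cliques sharing a single (moderately long) edge. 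Note, moreover, that even if your overlap estimate were correct, Paley--Zygmund would only give $\Pr(X_k>0)\ge 1-O(c^2)$, which is not $1-o(1)$ for fixed $c>0$, so an extra concentration step would be needed in any case to conclude ``a.a.s.''.

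The paper's route (following \cite{DGR-match}) avoids clique counts in the lower bound altogether. For general $r$ it partitions $[rn]$ into $\Theta(n^{1/r})$ blocks, shows that each prescribed $r$-tuple of blocks is spanned by an edge of the random matching with probability bounded away from $0$ (Lemma~\ref{lemma:span_prob}), and then concentrates the number of realized template edges via Talagrand's inequality for random permutations (Theorem~\ref{tala}). For $r=2$ specifically, lines can be handled elementarily by a second moment applied to the number of \emph{short edges} (a much more weakly correlated count than the clique count), followed by removing non-alignments, as in Appendix~\ref{thm:lines}; the stack and wave lower bounds then follow from the bipartite sub-matching of $\rm^{(2)}_n$ combined with Theorem~\ref{Theorem E-S for LSW} and the $O(\sqrt n)$ upper bound for the complementary pattern. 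The ``alternatives'' you mention at the end are precisely these workable arguments; the unified second-moment treatment you put first is the part that breaks.
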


Note that, this time, the size of the sub-structure expected in a random matching exceeds that guaranteed in  every matching (c.f. Theorem \ref{Theorem E-S for LSW} with $\ell=s=w=\lceil n^{1/3}\rceil-1$ or Corollary~\ref{Theorem ESz General_diagonal}(b) for $r=2$). This trend continues with $r$ growing.
Indeed, in Section~\ref{section:random} we show that, for \emph{every} collectable $r$-pattern $P$,  the largest size of a $P$-clique in $\rm_n^{(r)}$ exceeds substantially the size  guaranteed in the deterministic case by Corollary \ref{Theorem ESz General_diagonal}.

 \begin{thm}\label{thm:random}
Let $P$ be an arbitrary collectable $r$-pattern. Then, a.a.s.\ the size of the largest $P$-clique in a random $r$-matching $\rm^{(r)}_{n}$ is~$\Theta_r(n^{1/r})$.
\end{thm}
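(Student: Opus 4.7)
Let $X_k$ denote the number of $P$-cliques of size $k$ in $\rm^{(r)}_n$. Since $P$ is collectable, hence splittable, the block decomposition of $P$ forces a unique $P$-clique of size $k$ on every ordered $rk$-subset of $[rn]$; there are therefore $\binom{rn}{rk}$ potential $P$-cliques of size $k$. Counting the matchings of $[rn]$ that extend a fixed sub-matching of size $k$ yields the explicit formula
\begin{equation*}
  E[X_k] \;=\; \frac{(r!)^k\,n(n-1)\cdots(n-k+1)}{(rk)!},
\end{equation*}
whose Stirling-approximate logarithm equals $k\log\!\bigl(r!\,e^r n/(rk)^r\bigr)+O(\log k)$.

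The upper bound follows from the first-moment method: take $k=\lceil C_r n^{1/r}\rceil$ with $C_r$ large enough that $r!\,e^r/(r^r C_r^r)<1$. Then $E[X_k]=o(1)$, and Markov's inequality shows that a.a.s.\ no $P$-clique of size $C_r n^{1/r}$ exists in $\rm^{(r)}_n$.

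For the lower bound, take $k=\lfloor c_r n^{1/r}\rfloor$ with $c_r>0$ so small that $r!\,e^r/(r^r c_r^r)>1$, making $E[X_k]\to\infty$. The plan is to apply Chebyshev's inequality, which requires $E[X_k^2]=(1+o(1))E[X_k]^2$. Writing $E[X_k^2]=\sum_{j=0}^k\mathcal T_j$ with $\mathcal T_j$ the contribution from pairs $(C,C')$ sharing exactly $j$ edges, a Stirling computation gives $\mathcal T_0=(1+o(1))E[X_k]^2$; the remaining task is to verify $\sum_{j\ge1}\mathcal T_j=o(E[X_k]^2)$. For each $j$, the probability ratio $P(C\cup C'\subseteq \rm^{(r)}_n)/P(C\subseteq\rm^{(r)}_n)^2$ admits a closed form of order $(r^r/r!)^j n^{(r-1)j}$, and the enumeration of overlap-$j$ pairs is parametrized by the splittable decomposition of $P$: fixing a size-$j$ sub-$P$-clique $D$, an extension to a size-$k$ clique is specified by gap-vectors $(g_0,\dots,g_j)$ recording the number of new runs between consecutive runs of $D$ in each of the $t$ blocks of $P$. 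The resulting multinomial sum is evaluated by the roots-of-unity multisection
\begin{equation*}
  \sum_{g_0+\cdots+g_j=k-j}\prod_{\alpha}\binom{W_\alpha}{r g_\alpha}
  \;=\; \frac{1}{r^{j+1}}\sum_{\vec s\in\{0,\ldots,r-1\}^{j+1}}[z^{r(k-j)}]\prod_{\alpha}(1+\zeta^{s_\alpha}z)^{W_\alpha},\qquad \zeta=e^{2\pi i/r},
\end{equation*}
where $W_\alpha$ is the $\alpha$-th inter-run gap of $D$ in $[rn]$. Combined with a restriction to ``well-distributed'' $D$ (those whose gap sizes $W_\alpha$ are uniformly of order $n/j$, which captures a $1-o(1)$ fraction of the contribution), the multisection analysis delivers the needed $o(E[X_k]^2)$ bound on $\sum_{j\ge1}\mathcal T_j$ for $c_r$ sufficiently small, and Chebyshev then yields $X_k\ge1$ a.a.s.

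\textbf{Main obstacle.} The principal technical difficulty is the overlap-pair enumeration: a naive upper bound on the number of pairs sharing $j$ edges, ignoring the block-alignment constraints imposed by the splittable structure of $P$, overshoots by a polynomial factor in $n$ and defeats Chebyshev already at $j=1$. The multisection identity above, analyzed term-by-term and restricted to well-distributed sub-$P$-cliques, supplies the cancellation necessary to tame the variance. Carrying out this block-by-block analysis uniformly across all $3^{r-1}$ collectable patterns—each with its own sequence of splittable block types—is where the combinatorial effort concentrates; once the variance estimate is in place, the rest of the argument is a standard application of Chebyshev's inequality.
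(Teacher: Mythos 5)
Your upper bound is correct and is essentially the paper's proof verbatim (first moment on the clique count with $k=\Theta_r(n^{1/r})$). The lower bound, however, has a fatal gap: the second-moment method on the clique count $X_k$ does not merely require a delicate enumeration of overlapping pairs -- it fails outright, for \emph{every} fixed $c_r>0$, because $\mathcal T_1/(\E X_k)^2\to\infty$ polynomially. Concretely, take $r=2$ and $P=AABB$ (lines), $k=c\sqrt n$. Sharing one edge boosts the probability by $\Pr(C\cup C'\subseteq \rm^{(2)}_n)/\bigl(\Pr(C\subseteq \rm^{(2)}_n)\Pr(C'\subseteq \rm^{(2)}_n)\bigr)=\Theta(n)$, while the exact gap-count of pairs through a common edge $e$ of length $d$ (your multisection formula, which here just gives $\tfrac12\binom{2n-d-1}{2k-2}$ per line through $e$) yields
\begin{equation*}
  \frac{\mathcal T_1}{(\E X_k)^2}\;\asymp\; n\cdot\sum_{e}\Bigl(\tfrac{N_e}{N}\Bigr)^2\;\asymp\; n\cdot\frac{k^3}{n^2}\;=\;\Theta\bigl(c^3\sqrt n\bigr)\;\longrightarrow\;\infty ,
\end{equation*}
where $N_e$ (resp.\ $N$) is the number of potential $k$-lines through $e$ (resp.\ in total). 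The same computation for general $r$ and the alignment pattern gives $\mathcal T_1/(\E X_k)^2=\Theta_r(k^{r+1}/n)=\Theta_r(n^{1/r})$, and an analogous (slightly smaller but still polynomial) blow-up occurs for the other collectable patterns. So the obstacle is not that a ``naive'' count overshoots and a finer block-aligned count saves you: the refined count changes only constant or sub-polynomial factors, and the variance is genuinely dominated by pairs of cliques sharing a single edge. This is the same phenomenon that defeats the vanilla second moment for longest increasing subsequences in random permutations, and no choice of small $c_r$, roots-of-unity multisection, or restriction to ``well-distributed'' sub-cliques can repair it. (A further, minor, point: even if one had $\E[X_k^2]=O((\E X_k)^2)$, Paley--Zygmund would only give $X_k\ge1$ with probability bounded away from $0$, not a.a.s.)

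The paper avoids this by not counting cliques globally. It fixes a \emph{template} $P$-clique $K$ of size $k=n^{1/r}$, blows each template vertex up into a block of $t=n^{1-1/r}$ consecutive integers, and lets $I_i$ indicate that some edge of $\rm^{(r)}_n$ spans the $r$ blocks assigned to the $i$-th template edge; any set of realized template edges is automatically a $P$-clique. Lemma~\ref{lemma:span_prob} (proved by the online generation scheme, or alternatively by Chernoff for hypergeometric variables) shows $\Pr(I_i=1)\ge c_r>0$, so $\E\sum_i I_i=\Omega_r(k)$, and concentration comes from Talagrand's inequality for random permutations (Theorem~\ref{tala}), using that $\sum_i I_i$ is $2$-Lipschitz under transpositions and certifiable by $rs$ values. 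If you want to salvage a second-moment flavour, it has to be applied to such localized indicators (cf.\ Remark~\ref{avoidTal} and Appendix~B of the paper), not to the raw count of $P$-cliques.
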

Here and throughout, we use notation $\Theta_r(.)$, $O_r(.)$, and $\Omega_r(.)$ to stress that the hidden constant depends on~$r$.

\section{Proof of Theorem \ref{Theorem ESz General_classic}}\label{mainproof}

In this section, after some preparations, we prove Theorem \ref{Theorem ESz General_classic}.

\subsection{Collectable patterns}\label{char}

In Section \ref{intro} we defined collectable and splittable patterns. Now we show that these two notions are equivalent. In fact, for any non-collectable pattern~$P$, one cannot even construct a $P$-clique of order as small as three.

\begin{prop}\label{Proposition Collectable-Splitable}
	A pattern $P$ is collectable if and only if it is splittable. Moreover, if $P$ is unsplittable, then every $P$-clique has size at most two.
\end{prop}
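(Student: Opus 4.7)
My plan is to prove the two implications separately; the substantive one is that $P$-cliques of size three cannot exist when $P$ is unsplittable.

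For the easy direction (splittable $\Rightarrow$ collectable), given a splitting $P=B_1B_2\cdots B_t$ with each block $B_s$ of the form $A^{k_s}B^{k_s}$ or $B^{k_s}A^{k_s}$, I would build a $P$-clique of size $k\ge2$ on edges $X_1,\ldots,X_k$ by replacing each block $A^{k_s}B^{k_s}$ with the word $X_1^{k_s}X_2^{k_s}\cdots X_k^{k_s}$ and each block $B^{k_s}A^{k_s}$ with its reversal $X_k^{k_s}X_{k-1}^{k_s}\cdots X_1^{k_s}$. For any $i<j$, the sub-word on $\{X_i,X_j\}$ restricted to a block reads $X_i^{k_s}X_j^{k_s}$ or $X_j^{k_s}X_i^{k_s}$, matching the corresponding block of $P$ under $X_i\leftrightarrow A$, $X_j\leftrightarrow B$, and concatenating across blocks shows that the pair forms $P$.

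For the substantive direction, I suppose toward contradiction that $P$ is unsplittable but three edges $E_1,E_2,E_3$, ordered by first appearance, form a $P$-clique. Write $g_0,g_1,\ldots,g_r$ for the gap sizes of $P$, where $g_i$ counts the $B$-letters strictly between the $i$-th and $(i{+}1)$-th $A$-position of $P$ (with the usual end conventions). Because both $(E_1,E_2)$ and $(E_1,E_3)$ realize $P$, between the $i$-th and $(i{+}1)$-th occurrence of $E_1$ in the clique one finds exactly $g_i$ copies of $E_2$ and exactly $g_i$ copies of $E_3$. Deleting the $E_1$-letters yields a $2r$-letter word on $\{E_2,E_3\}$ that decomposes as a concatenation of $r+1$ segments, the $i$-th of length $2g_i$; but this collapsed word must itself spell $P$ (the $(E_2,E_3)$-pattern), which forces the \emph{balance condition}: setting $L_i:=2(g_0+\cdots+g_{i-1})$, the substring $P[L_i+1\ldots L_{i+1}]$ contains exactly $g_i$ $A$'s and $g_i$ $B$'s for every $i$.

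It remains to show that the balance condition forces $P$ to be splittable, which I would prove by induction on $r$. Let $i_1$ be the least index with $g_{i_1}>0$, so $P$ starts with $A^{i_1}B^{g_{i_1}}$ and the first non-empty chunk occupies positions $1,\ldots,2g_{i_1}$. A direct count rules out $g_{i_1}<i_1$ (the chunk would lie in the leading $A$-run and contain no $B$'s) and shows that balance holds only if the chunk equals $A^{i_1}B^{i_1}$ when $g_{i_1}=i_1$, or $A^{i_1}B^{g_{i_1}}A^{g_{i_1}-i_1}$ when $g_{i_1}>i_1$, the latter additionally forcing $g_{i_1+1}=\cdots=g_{g_{i_1}-1}=0$. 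In both cases the chunk splits into at most two blocks of the required form, and the residual pattern $P':=P[2g_{i_1}+1\ldots 2r]$ on $r-g_{i_1}$ edges inherits the balance condition (its gap sequence is a tail of $(g_i)$ with an adjustment at index $0$, and its chunks are just the remaining chunks of $P$, shifted by $2g_{i_1}$). By induction $P'$ is splittable, hence so is $P$, and the contrapositive yields both the equivalence and the ``moreover'' clause. The main obstacle I anticipate is the inductive bookkeeping: carefully verifying the forced shape of the first chunk in the case $g_{i_1}>i_1$ (including the vanishing of intermediate gaps) and confirming that $P'$ genuinely satisfies the balance condition with respect to its own gap sequence; both facts reduce to chasing the definitions but require care with indices.
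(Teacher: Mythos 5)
Your proposal is correct, and it takes a genuinely different route from the paper's. The paper argues by contradiction at a \emph{single} well-chosen prefix: it writes $P=QS$ with $Q$ the longest splittable prefix (of length $2q$) and $S$ beginning $A^sB^tA$ with $s>t\ge1$ (or the symmetric case), observes that the first $2q+2t$ letters of $P$ contain unequal numbers of $A$'s and $B$'s, and then notes that in a size-three clique the $q+t$ letters of each of the other two edges preceding the $(q+s+1)$-st letter of the first edge would force the first $2q+2t$ letters of the second-and-third-edge subword (itself a copy of $P$) to be balanced --- an immediate contradiction, with no induction. You instead extract from the three-clique the \emph{full} family of balance constraints (the prefix of length $2(g_0+\cdots+g_i)$ is balanced for every $i$, where $(g_i)$ is the $A$-gap sequence) and then prove a separate lemma, by induction on $r$, that this balance condition forces splittability; the key mechanism (the first edge's letter positions cut the word of the other two edges into segments with equal counts of both letters, clashing with an unbalanced prefix of an unsplittable pattern) is the same, but your organization is different: the paper's argument is shorter and surgical, while yours is more systematic and yields, as a byproduct, an explicit numerical characterization of splittability, at the cost of the inductive bookkeeping you flag (which does go through: the case $g_{i_1}=i_1$ gives the block $A^{i_1}B^{i_1}$, the case $g_{i_1}>i_1$ gives $A^{i_1}B^{i_1}\,|\,B^{g_{i_1}-i_1}A^{g_{i_1}-i_1}$ with the intermediate gaps forced to vanish, the residual word inherits balance with gap sequence $(g_{g_{i_1}},g_{g_{i_1}+1},\dots)$, and the subcase $i_1=0$ handles residuals beginning with $B$, so the inductive statement must be phrased for arbitrary balanced two-letter words rather than patterns). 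One small imprecision: your parenthetical reason for excluding $g_{i_1}<i_1$ (``the chunk would lie in the leading $A$-run'') only covers $2g_{i_1}\le i_1$; for $i_1<2g_{i_1}<2i_1$ the chunk is $A^{i_1}B^{2g_{i_1}-i_1}$, which is still unbalanced since it has $i_1>g_{i_1}$ $A$'s, so the conclusion stands.
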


\begin{proof}
	It is not hard to verify that every splittable pattern is collectable. Indeed, as demonstrated in Section \ref{intro}, it suffices to replace every block of the splitting, which is always of the form $A^tB^t$ or $B^tA^t$, by, respectively, $A_1^t\cdots A_k^t$ or $A_k^t\cdots A_1^t$.
 %adding a new letter at the beginning or at the end of the block (depending on the order of the letters $A$ and $B$ in the block). NO!!!

	Now, assume that $P$ is an unsplittable pattern and let $Q$ be the longest splittable prefix of $P$, that is $P=QS$, where $Q$ is a splittable word (possibly empty) of length $2q$, $q\ge0$, and $S$ begins, say, with a block $A^sB^tA$  where $s>t\ge1$.  So, 
\begin{equation}\label{Pprop1}\textit{ there are exactly $q+t$ $B$'s to the left of the $(q+s+1)$-st letter $A$.}
 \end{equation}
 Moreover, there are $q+\min(s,2t)$  $A$'s and $q+\max(2t-s,0)$  $B$'s among the first $2q+2t$ letters of $P$. These precise numbers are of no importance to us - all what matters in the argument below is that the two numbers are different:
\begin{equation}\label{Pprop}
\textit{the numbers of  $A$'s and $B$'s among the first $2q+2t$ letters of $P$ are \emph{not} the same.}
\end{equation}
	
	Suppose to the contrary that there is a $P$-clique $K$ with three edges $e_A$, $e_B$ and $e_C$ (in lexicographic order) represented in the word notation, respectively, by letters $A,B$ and $C$. %Let $M_{A,B}$ and $M_{A,C}$ denote the sub-matchings (sub-words) of $M$ formed by the respective pairs of edges.
We now examine separately the two subwords $e_A\cup e_B$ and $e_A\cup e_C$, both isomorphic to $P$.
In view of~\eqref{Pprop1},  there are exactly $q+t$ letters $B$ and $q+t$ letters $C$ prior to the $(q+s+1)$-st letter $A$.
This implies that among the first $2q+2t$ letters of the subword $e_B\cup e_C$, the numbers of $B$'s and $C$'s are the same (both equal $q+t$).  This, however, contradicts  Property \eqref{Pprop} of $P$ observed earlier.

In the symmetric case, when $S$ begins with a block $B^sA^tB$, we examine instead subwords $e_A\cup e_C$ and $e_B\cup e_C$, obtaining the same contradiction for $e_A\cup e_B$.
\end{proof}

Next, using the above characterization, we enumerate the collectable $r$-patterns.
\begin{cor}\label{number_coll}
There are $3^{r-1}$ collectable patterns. Moreover, the total sum of their maturities equals $(3^{r-2}-1)/2$.
\end{cor}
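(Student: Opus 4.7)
By Proposition~\ref{Proposition Collectable-Splitable}, \emph{collectable} may be replaced by \emph{splittable} throughout, so the plan is to enumerate splittable $r$-patterns and compute the sum of their maturities. The main tool will be a bijection between splittable $r$-patterns and pairs consisting of a composition $(t_1,\dots,t_k)$ of $r$ into positive parts together with a sign vector $(\sigma_2,\dots,\sigma_k)\in\{+,-\}^{k-1}$: to such data I associate the concatenation $B_1B_2\cdots B_k$, where $B_1=A^{t_1}B^{t_1}$ (forced by the convention that patterns begin with $A$) and, for $i\ge 2$, the block $B_i$ equals $A^{t_i}B^{t_i}$ or $B^{t_i}A^{t_i}$ according to the sign $\sigma_i$.

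The key technical step, and the main obstacle in the plan, is verifying that this map is injective. I would do so by induction on $k$: from the pattern alone, the initial $A$-run has length exactly $t_1$ (the first block ends its $A$-half at position $t_1$, where the $B$-half necessarily begins), so the first block is pinned down; the same argument applied to the suffix then recovers $t_2$ and $\sigma_2$, and iteration yields the full tuple. Granted injectivity, the first claim follows from
\[
\sum_{k=1}^{r}\binom{r-1}{k-1}\,2^{k-1}=(1+2)^{r-1}=3^{r-1},
\]
since compositions of $r$ with $k$ parts are counted by $\binom{r-1}{k-1}$ and there are $2^{k-1}$ sign vectors.

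For the maturity sum, the key observation is that the last maximal run of any splittable pattern coincides with the second half of the final block $B_k$---the letter immediately to its left, lying in the first half of $B_k$, is of the opposite kind---so this run has length exactly $t_k$ and thus $m(P)=\max(t_k-2,0)$. Grouping patterns by the last block size $t=t_k$, one finds a single pattern $A^rB^r$ when $t=r$ and, for each $t<r$, exactly $2\cdot 3^{r-t-1}$ patterns (any of the $3^{r-t-1}$ splittable prefixes of size $r-t$ combined with either sign for $B_k$). Since $\max(t-2,0)=0$ for $t\le 2$, this gives
\[
\gamma_r=\max(r-2,0)+2\sum_{t=3}^{r-1}(t-2)\,3^{r-t-1},
\]
which is an elementary arithmetic-geometric sum; evaluating it directly (equivalently, extracting $[x^r]\frac{x^3}{(1-x)(1-3x)}$ by partial fractions) yields $\gamma_r=(3^{r-2}-1)/2$, as claimed.
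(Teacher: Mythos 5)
Your proof is correct and follows essentially the same route as the paper: enumerate splittable patterns via compositions of $r$ with sign choices for the later blocks (giving $\sum_{k}\binom{r-1}{k-1}2^{k-1}=3^{r-1}$), then group by the size $t$ of the last block to compute the maturity sum $(r-2)+2\sum_{t=3}^{r-1}(t-2)3^{r-t-1}=(3^{r-2}-1)/2$. The only difference is that you spell out the uniqueness of the splitting (recovering $t_1$ from the initial $A$-run and recursing), which the paper simply asserts; that is a harmless and welcome addition.
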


\begin{proof} By Proposition \ref{Proposition Collectable-Splitable} we enumerate splittable patterns instead. Let $P$ be a splittable $r$-pattern. Note that its partition into appropriate blocks $X_1,\ldots, X_j$ is unique. Further,  the sizes $2s_i:=|X_i|$, $i=1,\dots,j$, of the partition blocks are determined by  ordered partitions  of $r=s_1+\dots+s_j$ into $j$  integer parts $s_i\ge1$ (which are then doubled). Finally, given such a partition,  a splittable pattern is obtained by deciding for each $i=2,\dots,j$,  which letter, $A$ or $B$, goes first in block $X_i$. (We assume that $X_1$ begins with $A$.) As there are $\binom{r-1}{j-1}$ such partitions, we infer that the number of splittable, and thus collectable $r$-patterns  equals
$$\sum_{j=1}^r\binom{r-1}{j-1}2^{j-1}=3^{r-1}.$$

For the second assertion,  let the last block in the splittable  partition of $P$ be $A^tB^t$, with $3\leq t\leq r$ (for $t=1$ and $t=2$, $m(P)=0$). For $t=r$, we have $P=A^rB^r$ and $m(P)=r-2$. For $3\leq t\leq r-1$, by the first statement of Corollary~\ref{number_coll}, there are $3^{r-t-1}$ collectable patterns with this ending block and the same number of them in the symmetric case when $P$ ends with  $B^tA^t$. Thus, there are exactly $2\cdot 3^{r-t-1}$ patterns with maturity $t-2$, for each $3\leq t \leq r-1$. Consequently, the total sum of maturities equals
$$(r-2)+2\cdot\sum_{t=3}^{r-1}(t-2)\cdot 3^{r-t-1}=(r-2)+2\cdot\sum_{m=1}^{r-3}m\cdot 3^{r-m-3}=\frac{1}{2}\cdot (3^{r-2}-1),$$
%where the last identity can be easily verified by induction.
since $\sum_{m=1}^{r-3}m\cdot 3^{-m}=\frac{3}{4}\cdot [(3-2r) 3^{2-r}+1]$ (see, e.g.,\ identity (2.26) in~\cite{Concrete}).
\end{proof}

\subsection{Pattern decompositions and $P$-families}\label{pat_decomp}
In this subsection we define a  decomposition of patterns that will play a crucial role in the proof of our main result.

 Given an $r$-pattern $P$, $r\ge3$, let $Q:=Q(P)$ denote the $(r-1)$-pattern obtained from $P$ by deleting the last letter $A$ and the last letter $B$. In other words, $Q(P)$ is formed of the first $r-1$ letters $A$ and the first $r-1$ letters $B$ of $P$. Also, let $R:=R(P)$ be the $2$-pattern formed by the last two letters $A$ and the last two letters $B$ of $P$. In this way we have made a \emph{decomposition} of a given $r$-pattern $P$ into a pair of shorter patterns $(Q,R)$. We write  then $\dec(P)=(Q,R)$. For example, if $P=AABABBAB$, then $\dec(P)=(AABABB,\;ABAB)$. %Graphically, one may display a pattern $P$ as two vertex disjoint (monotone) paths with $r-1$ edges each. Then the pattern $Q$ is obtained by deleting the last vertex from each of the two paths forming $P$, while $R$ is the pattern formed of these two deleted vertices and their neighbors on the paths.
We call $Q$ and $R$, the \emph{left parent} and the \emph{right parent} of $P$, respectively, while $P$ is dubbed a \emph{child} of $Q$ and $R$ (see Fig.\ \ref{ESZ5}).

Recall that there are only three possible right parents, an alignment $R_1=AABB$, a nesting $R_2=ABBA$, and a crossing $R_3=ABAB$. However,  they may sometimes appear in the dual form of, respectively, $BBAA$, $BAAB$, and $BABA$, forced by the ordering of the letters in the child. %(precisely, whenever the penultimate occurrence of letter $A$ is to the right of the penultimate occurrence of letter $B$, or, equivalently, whenever the left parent ends with $A$).

%W MIEJSCE RYSUNKU: $A\;A\;B\;A\;B\;B\;A\;B$ ODPOWIEDNIO POKOLOROWANE

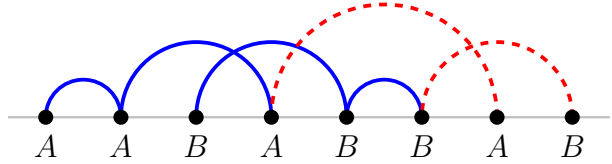
\begin{figure}[ht]
\captionsetup[subfigure]{labelformat=empty}
\begin{center}

\scalebox{1}
{
\centering
\begin{tikzpicture}
[line width = .5pt,
vtx/.style={circle,draw,black,very thick,fill=black, line width = 1pt, inner sep=0pt},
]
    \coordinate (0) at (0.5,0) {};
    \node[vtx] (1) at (1,0) {};
    \node[vtx] (2) at (2,0) {};
    \node[vtx] (3) at (3,0) {};
    \node[vtx] (4) at (4,0) {};
    \node[vtx] (5) at (5,0) {};
    \node[vtx] (6) at (6,0) {};
    \node[vtx] (7) at (7,0) {};
    \node[vtx] (8) at (8,0) {};
    \coordinate (9) at (8.5,0) {};

    \draw[line width=0.5mm, color=blue, outer sep=2mm] (2) arc (0:180:0.5);
    \draw[line width=0.5mm, color=blue, outer sep=2mm] (4) arc (0:180:1);
    \draw[line width=0.5mm, color=blue, outer sep=2mm] (5) arc (0:180:1);
    \draw[line width=0.5mm, color=blue, outer sep=2mm] (6) arc (0:180:0.5);
    \draw[line width=0.5mm, color=red, outer sep=2mm, dashed] (7) arc (0:180:1.5);
    \draw[line width=0.5mm, color=red, outer sep=2mm, dashed] (8) arc (0:180:1);

    \draw[line width=0.3mm, color=lightgray]  (0) -- (9);

    \fill[fill=black, outer sep=1mm]   (1) circle (0.1) node [below] {$A$};
    \fill[fill=black, outer sep=1mm]   (2) circle (0.1) node [below] {$A$};
    \fill[fill=black, outer sep=1mm]  (3) circle (0.1) node [below] {$B$};
    \fill[fill=black, outer sep=1mm]   (4) circle (0.1) node [below] {$A$};
    \fill[fill=black, outer sep=1mm]   (5) circle (0.1) node [below] {$B$};
    \fill[fill=black, outer sep=1mm]   (6) circle (0.1) node [below] {$B$};
    \fill[fill=black, outer sep=1mm]   (7) circle (0.1) node [below] {$A$};
    \fill[fill=black, outer sep=1mm]   (8) circle (0.1) node [below] {$B$};

    %    \draw[line width=0.3mm, fill=white, outer sep=1mm]  (1) circle (0.1) node [below] {$A$};
%    \draw[line width=0.3mm, fill=white, outer sep=1mm]  (2) circle (0.1) node [below] {$A$};
%    \draw[line width=0.3mm, fill=white, outer sep=1mm] (3) circle (0.1) node [below] {$B$};
%    \draw[line width=0.3mm, fill=gray, outer sep=1mm]  (4) circle (0.1) node [below] {$A$};
%    \draw[line width=0.3mm, fill=white, outer sep=1mm]  (5) circle (0.1) node [below] {$B$};
%    \draw[line width=0.3mm, fill=gray, outer sep=1mm]  (6) circle (0.1) node [below] {$B$};
%    \draw[line width=0.3mm, fill=black, outer sep=1mm]  (7) circle (0.1) node [below] {$A$};
%    \draw[line width=0.3mm, fill=black, outer sep=1mm]  (8) circle (0.1) node [below] {$B$};

    %\tikzstyle{edge} = [opacity=.5,fill opacity=0,line cap=round, line join=round, line width=20pt]
    %\draw[edge, opacity=0.5, color=blue, line width=25pt] (1) -- node[left] {$Q$} (11) -- (66) -- (6) -- (1);
    %\draw[edge, opacity=0.7, color=red] (4) -- (44) -- (66) -- (6) --  (8) -- node[right] {$R$} (88) -- (44) -- (4);

    %\draw[line width=0.5mm, color=black, outer sep=2mm] plot [smooth cycle, tension=1] coordinates {(01) (001) (11) (12) (22) (23) (33) (34) (44) (45) (55) (56) (66) (667) (67)};

\end{tikzpicture}
}

\end{center}

\caption{Decomposition of $AABABBAB$ into the left parent (blue) and the right parent (dashed red): $\dec(AABABBAB)=(AABABB,ABAB)$.}
\label{ESZ5}
	
\end{figure}

Clearly, for every pattern $P$ its parents $(Q,R)$ are determined uniquely. In other words, \text{dec} is a well defined function. However, it is not an injection: a pair of parents may ``give birth'' to more than one child. 
 Thus, in principle the pre-image $\dec^{-1}(Q,R)$ is not a singleton, in which case the children are called \emph{siblings}. When $\dec^{-1}(Q,R)=\{P\}$, we will simply write $\dec^{-1}(Q,R)=P$.

 For $r=3$ we have just one  instance of non-singular preimage, namely $\dec(AAABBB)=\dec(AABABB)=(AABB,AABB)$  (see Table \ref{table:relations}),  so here $\dec^{-1}(AABB,AABB)=\{AAABBB,AABABB\}$ is a 2-element set of siblings. For $r=4$, parents $Q=AAABBB$ and $R=AABB$ have three children, $P_1=AAAABBBB$, $P_1^*=AAABABBB$, and $P_1^{**}=AAABBABB$, that is, $\dec^{-1}(Q,R)=\{P_1,P_1^*,P_1^{**}\}$. There are, moreover, three 2-element sets of siblings, $\dec^{-1}(ABBBAA,BBAA)=\{P_{10},P^*_{10}\}$, $\dec^{-1}(ABABAABB,AABB)=\{P_{19},P^*_{19}\}$, and $\dec^{-1}(AAABABB,AABB)=\{\bar P_{\ell,\ell,\ell}, \bar P^*_{\ell,\ell,\ell}\}$, the last one, with a non-collectable left parent (the right parent is always $R_1$), will be of no interest for us (see Table \ref{table:patterns4}.
Notice also that each of the $3\cdot9=27$ pairs of collectable parents listed in Table \ref{table:patterns4} has exactly one collectable child. When this unique collectable child has siblings, it will be termed a \emph{big brother}. Thus, there are three big brothers, not four.

%\afterpage{
\begin{longtable}{ |l||c||c| }
	\hline
	\textsc{pattern $P$}& \textsc{as word} & \textsc{decomposition} $\text{dec}(P)$\\
	\hline
	\hline
 $P_1=P_{\ell,\ell,\ell}$ $(bb)$& $|AAAABBBB|$ & $(AAABBB,AABB)$\\
	\hline
	 $P_1^*=P^*_{\ell,\ell,\ell}$ & $AAABABBB$ &  $(AAABBB,AABB)$\\
	\hline
	 $P_1^{**}=P^{**}_{\ell,\ell,\ell}$ & $AAABBABB$ &  $(AAABBB,AABB)$\\
\hline
	 $P_{2}=P_{\ell,\ell,s}$ & $|AAABBB|BA|$ &  $(AAABBB,ABBA)$\\
\hline
	 $P_{3}=P_{\ell,\ell,w}$ & $|AAABBB|AB|$ & $(AAABBB,ABAB)$\\
	\hline
	\hline
	 $P_4=P_{\ell,s,\ell}$ & $|AABB|BBAA|$ &  $(AABBBA,BBAA)$\\
\hline
	 $P_{5}=P_{\ell,s,s}$ & $|AABB|BA|AB|$ & $(AABBBA,BAAB)$\\
\hline
	 $P_{6}=P_{\ell,s,w}$ & $|AABB|BA|BA|$ &  $(AABBBA,BABA)$\\
\hline
	\hline
	 $P_7=P_{\ell,w,\ell}$ & $|AABB|AABB|$ &  $(AABBAB,AABB)$\\
\hline
	 $P_{8}=P_{\ell,w,s}$ & $|AABB|AB|BA|$ & $(AABBAB,ABBA)$\\
\hline
	 $P_{9}=P_{\ell,w,w}$ & $|AABB|AB|AB|$ &  $(AABBAB,ABAB)$\\
\hline
	\hline
	 $P_{10}=P_{s,\ell,\ell}$ $(bb)$& $|AB|BBBAAA|$ &  $(ABBBAA,BBAA)$\\
	\hline
	 $P_{10}^*=P^*_{s,\ell,\ell_1}$ & $ABBBABAA$ &  $(ABBBAA,BBAA)$\\
\hline
	 $P_{11}=P_{s,\ell,s}$ & $|AB|BBAA|AB|$ & $(ABBBAA,BAAB)$\\
\hline
	 $P_{12}=P_{s,\ell,w}$ & $|AB|BBAA|BA|$ &  $(ABBBAA,BABA)$\\
\hline
\hline
	 $P_{13}=P_{s,s,\ell}$ & $|AB|BA|AABB|$ &  $(ABBAAB,AABB)$\\
\hline
	 $P_{14}=P_{s,s,s}$ & $|AB|BA|AB|BA|$ & $(ABBAAB,ABBA)$\\
\hline
	 $P_{15}=P_{s,s,w}$ & $|AB|BA|AB|AB|$ &  $(ABBAAB,ABAB)$\\
\hline
\hline
	 $P_{16}=P_{s,w,\ell}$ & $|AB|BA|BBAA|$ & $(ABBABA,BBAA)$\\
\hline
	 $P_{17}=P_{s,w,s}$ & $|AB|BA|BA|AB|$ &  $(ABBABA,BAAB)$\\
\hline
	 $P_{18}=P_{s,w,w}$ & $|AB|BA|BA|BA|$ &  $(ABBABA,BABA)$\\
\hline
	\hline
 $P_{19}=P_{w,\ell,\ell}$ $(bb)$ & $|AB|AAABBB|$ & $(ABAABB,AABB)$\\
	\hline
	 $P_{19}^*=P^*_{w,\ell,\ell}$ & $ABAABABB$ & $(ABAABB,AABB)$\\
	\hline
	 $P_{20}=P_{w,\ell,s}$ & $|AB|AABB|BA|$ &  $(ABAABB,ABBA)$\\
	\hline
	 $P_{21}=P_{w,\ell,w}$ & $|AB|AABB|AB|$ &  $(ABAABB,ABAB)$\\
\hline
\hline
	 $P_{22}=P_{w,s,\ell}$ & $|AB|AB|BBAA|$ &  $(ABABBA,BBAA)$\\
\hline
	 $P_{23}=P_{w,s,s}$ & $|AB|AB|BA|AB|$ & $(ABABBA,BAAB)$\\
\hline
	 $P_{24}=P_{w,s,w}$ & $|AB|AB|BA|BA|$ &  $(ABABBA,BABA)$\\
\hline
\hline
	 $P_{25}=P_{w,w,\ell}$ & $|AB|AB|AABB|$ &  $(ABABAB,AABB)$\\
	\hline
	 $P_{26}=P_{w,w,s}$ & $|AB|AB|AB|BA|$ & $(ABABAB,ABBA)$\\
\hline
	 $P_{27}=P_{w,w,w}$ & $|AB|AB|AB|AB|$ &  $(ABABAB,ABAB)$\\
\hline
\hline
	 $\bar P_{\ell,\ell,\ell}$ & $AABAABBB$ &  $(AABABB,AABB)$\\
	\hline
	 $\bar P^*_{\ell,\ell,\ell}$ & $AABABABB$ & $(AABABB,AABB)$\\
	\hline
	 $\bar P_{\ell,\ell,w}$ & $AABABBAB$ &  $(AABABB,ABAB)$\\
	\hline
	 $\bar P_{\ell,\ell,s}$ & $AABABBBA$ &  $(AABABB,ABBA)$\\
	\hline
	\caption{All 35 patterns of two quadruples  and their corresponding decompositions. They are organized into 10 groups by the patterns of their left parents, while the alternative notation $P_{xyz}$ encodes the pattern's parents (cf.\ Table \ref{table:relations}).  Big brothers are marked with $(bb)$, their siblings with $^*$, while other non-collectable patterns (i.e., those whose left parents are non-collectable too) are put at the end, unnumbered, and marked with $\;\bar{}$.}
	\label{table:patterns4}
\end{longtable}
%}

%For a pair of parents $(Q,R)$, let $\mathcal{C}_{Q\oplus R}$ denote the set of their children.
We collect all essential properties of the decomposition function $\dec(P)$ in Proposition \ref{Proposition Big Brother} below.
Let $t(Q)$ denote the length of the last (maximal) run in $Q$. Note that $t(Q)\ge1$ and, whenever $t(Q)\ge2$, we have $t(Q)=m(Q)+2$.

\begin{prop}\label{Proposition Big Brother} Let $r\ge 3$ and let $Q$ be a collectable $(r-1)$-pattern.
\begin{itemize}
\item[(i)] For each $j=2,3$, the pair $(Q,R_j)$ has only one child, and the same is true for the pair $(Q,R_1)$ provided $t(Q)=1$. Moreover, this only child is collectable and its maturity is equal to zero.
\item[(ii)] If $t(Q)\ge2$, then the pair $(Q,R_1)$ has $t(Q)$ children, only one of which, the big brother, is collectable. Moreover, the maturity of the big brother equals $t(Q)-1=m(Q)+1$.
\end{itemize}
\end{prop}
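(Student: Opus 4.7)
The plan is to view each child $P$ of $(Q,R)$ as determined by the insertions of the $r$-th $A$ and the $r$-th $B$ into $Q$, and then enumerate the possibilities by case. Since $Q$ is collectable, Proposition \ref{Proposition Collectable-Splitable} yields its (unique) splittable decomposition, whose final block has the form $X^tY^t$ with $t=t(Q)$; I would write $Q=\widetilde{Q}\,X^tY^t$, where $\widetilde{Q}$ is empty or itself splittable. The definition of $\dec$ forces the inserted $r$-th $A$ to come after the last $A$ of $Q$, and likewise the $r$-th $B$ to come after the last $B$ of $Q$. By the $A\leftrightarrow B$ symmetry I will run the analysis under the assumption $Y=B$ (so $Q$ ends in $B^t$); the case $Y=A$ is handled by relabeling.

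Under that assumption, the last $A$ of $Q$ sits at position $a=2r-2-t$ and the last $B$ at position $2r-2$, so the new $A$ has $t+1$ candidate gaps (those inside or immediately after the trailing $B^t$ of $Q$), while the new $B$ has only the single candidate gap at the end of $Q$. I would parametrise the children by the number $k\in\{0,\dots,t-1\}$ of trailing $B$'s of $Q$ placed before the new $A$ (with the new $B$ at the very end of $P$), together with the two limit configurations where both new letters sit at the end of $P$ in the orders $AB$ or $BA$. This produces, respectively, the children
\[
P=\widetilde{Q}\,A^tB^kAB^{t-k+1},\qquad P=\widetilde{Q}\,A^tB^tAB,\qquad P=\widetilde{Q}\,A^tB^{t+1}A.
\]
Reading off the pattern formed in $P$ by the last two $A$'s and last two $B$'s identifies the right parents as $R_1$, $R_3$, and $R_2$ respectively. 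This already yields the counting statements of both (i) and (ii), including the collapse of $(Q,R_1)$ to a single child when $t=1$.

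To finish, I would decide collectability using the principle implicit in Proposition \ref{Proposition Collectable-Splitable}: a word is splittable iff cutting at every position where the prefix has equally many $A$'s and $B$'s yields blocks each of the form $X^sY^s$. Both $\widetilde{Q}\,A^tB^{t+1}A$ and $\widetilde{Q}\,A^tB^tAB$ admit an extra cut after the prefix $\widetilde{Q}\,A^tB^t$, yielding splittings $\widetilde{Q}\,|\,A^tB^t\,|\,BA$ and $\widetilde{Q}\,|\,A^tB^t\,|\,AB$; both are collectable with last block of length $2$, and hence maturity~$0$. The $k=0$ child $\widetilde{Q}\,A^{t+1}B^{t+1}$ splits as $\widetilde{Q}\,|\,A^{t+1}B^{t+1}$ and is collectable with maturity $t-1$; for $t\ge 2$ this identifies it as the big brother with $m(P)=m(Q)+1$ (since then $m(Q)=t-2$). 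For each $k\in\{1,\dots,t-1\}$, however, the tail $A^tB^kAB^{t-k+1}$ has no interior prefix-equality position and is not itself of the form $X^sY^s$, so the corresponding child fails to be splittable.

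The main obstacle I anticipate is precisely this last non-splittability check underpinning (ii): it requires ruling out every conceivable way of carving $A^tB^kAB^{t-k+1}$ into $X^sY^s$-blocks, which is exactly where the uniqueness of cut points by prefix equality becomes essential. The degenerate case $t=1$ is absorbed seamlessly, since then only $k=0$ remains in the $(Q,R_1)$ family and the resulting child $\widetilde{Q}\,A^2B^2$ has a final block of length $2$ and maturity~$0$, matching part (i).
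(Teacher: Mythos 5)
Your argument is correct and is essentially the paper's own proof: you write $Q=\widetilde{Q}A^tB^t$ via its (unique) splitting, observe that the new last $A$ must land in one of the $t+1$ gaps at or after the trailing $B$-run and the new last $B$ after the last $B$ of $Q$, enumerate the resulting children ($QBA$, $QAB$, and the $t$ words $\widetilde{Q}A^tB^{k}AB^{t-k+1}$), identify their right parents, and check splittability of each, exactly as in the paper. The only (cosmetic) difference is the final non-splittability check: you cut at balanced-prefix positions and note the whole tail would have to be a single block $X^sY^s$, whereas the paper simply observes that a suffix $BAB^{t_2}$ with $t_2\ge2$ is incompatible with any splitting; both are valid.
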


\begin{proof} Denote $t(Q)=t$ and choose a word representation of $Q$ which ends with letter $B$, that is, $Q=Q'A^tB^t$, for some $t\geq 1$, where $Q'$ is a collectable $(r-1-t)$-pattern.
	
To produce any child $P$ of the pair $(Q,R_j)$, $j=1,2,3$, one has to attach the $2$-pattern $R_j$  to the back of $Q$ so that applying the decomposition function $\dec(P)$ would yield back the parents $Q$ and $R_j$. More precisely, one has to identify the first $A$ and the first $B$ of $R_j$ with the last $A$ and the  last $B$ of $Q$, respectively, and then locate the second $A$ and the second $B$ of $R_j$ accordingly to pattern $R_j$. It is not hard to check that for $j\in\{2,3\}$ there is only one way to perform this operation, as illustrated in Fig.\ \ref{ESZ6}, and that we always have $P=\dec^{-1}(Q, R_2)=QAB$ and $P=\dec^{-1}(Q,R_3)=QBA$. Clearly, in both cases the pattern $P$ is collectable and satisfies $m(P)=0$.

\begin{figure}[ht]
\captionsetup[subfigure]{labelformat=empty}
\begin{center}

\scalebox{1}
{
\centering
\begin{tikzpicture}
[line width = .5pt,
vtx/.style={circle,draw,black,very thick,fill=black, line width = 1pt, inner sep=0pt},
]
    \coordinate (0) at (-0.25,0) {};
    \node[vtx] (1) at (1,0) {};
    \node[vtx] (2) at (2,0) {};
    \node[vtx] (3) at (3,0) {};
    \node[vtx] (4) at (4,0) {};
    \coordinate (44) at (4,1) {};
    \node[vtx] (5) at (5,0) {};
    \node[vtx] (6) at (6,0) {};
    \node[vtx] (7) at (7,0) {};
    \coordinate (8) at (7.5,0) {};

    \draw[line width=0.3mm, color=lightgray]  (0) -- (8);

    \draw[line width=0.5mm, color=blue, outer sep=2mm] (2) arc (0:180:0.5);
    \draw[line width=0.5mm, color=blue, outer sep=2mm] (4) arc (0:180:0.5);
    \draw[line width=0.5mm, color=blue, outer sep=2mm] (5) arc (0:180:0.5);
    \draw[line width=0.5mm, color=red, outer sep=2mm, dashed] (7) arc (0:180:1);
    \draw[line width=0.5mm, color=red, outer sep=2mm, dashed] plot [smooth, tension=2] coordinates {(2) (44) (6)};
    \draw[line width=0.5mm, color=blue, outer sep=2mm] (1) arc (0:85:1);
    \draw[line width=0.5mm, color=blue, outer sep=2mm] (3) arc (0:120:1);

    \fill[fill=black, outer sep=1mm]   (1) circle (0.1) node [below] {$A$};
    \fill[fill=black, outer sep=1mm]   (2) circle (0.1) node [below] {$A$};
    \fill[fill=black, outer sep=1mm]  (3) circle (0.1) node [below] {$B$};
    \fill[fill=black, outer sep=1mm]   (4) circle (0.1) node [below] {$B$};
    \fill[fill=black, outer sep=1mm]   (5) circle (0.1) node [below] {$B$};
    \fill[fill=black, outer sep=1mm]   (6) circle (0.1) node [below] {$A$};
    \fill[fill=black, outer sep=1mm]   (7) circle (0.1) node [below] {$B$};

    \coordinate (0) at (7.75,0) {};
    \node[vtx] (1) at (9,0) {};
    \node[vtx] (2) at (10,0) {};
    \node[vtx] (3) at (11,0) {};
    \node[vtx] (4) at (12,0) {};
    \coordinate (45) at (12.5,1) {};
    \node[vtx] (5) at (13,0) {};
    \node[vtx] (6) at (14,0) {};
    \node[vtx] (7) at (15,0) {};
    \coordinate (8) at (15.5,0) {};

    \draw[line width=0.5mm, color=blue, outer sep=2mm] (2) arc (0:180:0.5);
    \draw[line width=0.5mm, color=blue, outer sep=2mm] (4) arc (0:180:0.5);
    \draw[line width=0.5mm, color=blue, outer sep=2mm] (5) arc (0:180:0.5);
    \draw[line width=0.5mm, color=red, outer sep=2mm, dashed] (6) arc (0:180:0.5);
    \draw[line width=0.5mm, color=red, outer sep=2mm, dashed] plot [smooth, tension=2] coordinates {(2) (45) (7)};
    \draw[line width=0.5mm, color=blue, outer sep=2mm] (1) arc (0:85:1);
    \draw[line width=0.5mm, color=blue, outer sep=2mm] (3) arc (0:120:1);

    \draw[line width=0.3mm, color=lightgray]  (0) -- (8);

    \fill[fill=black, outer sep=1mm]   (1) circle (0.1) node [below] {$A$};
    \fill[fill=black, outer sep=1mm]   (2) circle (0.1) node [below] {$A$};
    \fill[fill=black, outer sep=1mm]  (3) circle (0.1) node [below] {$B$};
    \fill[fill=black, outer sep=1mm]   (4) circle (0.1) node [below] {$B$};
    \fill[fill=black, outer sep=1mm]   (5) circle (0.1) node [below] {$B$};
    \fill[fill=black, outer sep=1mm]   (6) circle (0.1) node [below] {$B$};
    \fill[fill=black, outer sep=1mm]   (7) circle (0.1) node [below] {$A$};

\end{tikzpicture}
}

\end{center}

\caption{The only children of $Q$ with $ABAB$ and with $ABBA$.}
\label{ESZ6}
	
\end{figure}
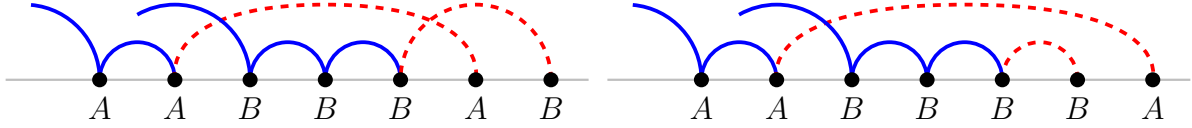

There is also a unique outcome whenever $j=t=1$, but in this case we have $P=\dec^{-1}(Q,R_1)=Q'AABB$ (see Fig.\ \ref{ESZ13}). Hence, the pattern $P$ is collectable and $m(P)=0$. This proves part (i) of the proposition.

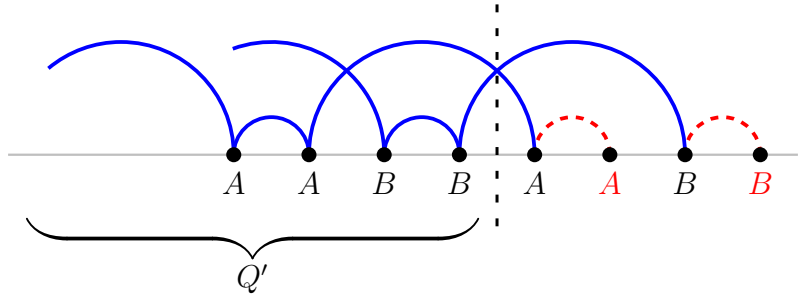
\begin{figure}[ht]
\captionsetup[subfigure]{labelformat=empty}
\begin{center}

\scalebox{1}
{
\centering
\begin{tikzpicture}
[line width = .5pt,
vtx/.style={circle,draw,black,very thick,fill=black, line width = 1pt, inner sep=0pt},
]
    \coordinate (0) at (0,0) {};
    \node[vtx] (1) at (3,0) {};
    \node[vtx] (2) at (4,0) {};
    \node[vtx] (3) at (5,0) {};
    \node[vtx] (4) at (6,0) {};
    \node[vtx] (5) at (7,0) {};
    \node[vtx] (6) at (8,0) {};
    \node[vtx] (7) at (9,0) {};
    \node[vtx] (8) at (10,0) {};
    \coordinate (9) at (10.5,0) {};

    \draw[line width=0.3mm, color=lightgray]  (0) -- (9);
    \draw[line width=0.4mm, color=black, loosely dashed]  (6.5, 2) -- (6.5,-1);

    \draw[line width=0.5mm, color=blue, outer sep=2mm] (2) arc (0:180:0.5);
    \draw[line width=0.5mm, color=blue, outer sep=2mm] (4) arc (0:180:0.5);
    \draw[line width=0.5mm, color=red, outer sep=2mm, dashed] (6) arc (0:180:0.5);
    \draw[line width=0.5mm, color=red, outer sep=2mm, dashed] (8) arc (0:180:0.5);
    \draw[line width=0.5mm, color=blue, outer sep=2mm] (5) arc (0:180:1.5);
    \draw[line width=0.5mm, color=blue, outer sep=2mm] (7) arc (0:180:1.5);
    \draw[line width=0.5mm, color=blue, outer sep=2mm] (1) arc (0:130:1.5);
    \draw[line width=0.5mm, color=blue, outer sep=2mm] (3) arc (0:110:1.5);
    \draw [line width=0.5mm, decorate, decoration = {calligraphic brace,mirror,raise=10pt, amplitude=15pt}] (0.25,-0.5) --  (6.25,-0.5);
    \node at (3.25,-1.65) {$Q'$};

    \fill[fill=black, outer sep=1mm]   (1) circle (0.1) node [below] {$A$};
    \fill[fill=black, outer sep=1mm]   (2) circle (0.1) node [below] {$A$};
    \fill[fill=black, outer sep=1mm]  (3) circle (0.1) node [below] {$B$};
    \fill[fill=black, outer sep=1mm]   (4) circle (0.1) node [below] {$B$};
    \fill[fill=black, outer sep=1mm]   (5) circle (0.1) node [below] {$A$};
    \fill[fill=black, outer sep=1mm]   (6) circle (0.1) node [below] {\textcolor{red}{$A$}};
    \fill[fill=black, outer sep=1mm]   (7) circle (0.1) node [below] {$B$};
    \fill[fill=black, outer sep=1mm]   (8) circle (0.1) node [below] {\textcolor{red}{$B$}};

\end{tikzpicture}
}

\end{center}

\caption{The only child of $Q$ with $AABB$ when $t=1$.}
\label{ESZ13}
	
\end{figure}

Now assume that $R=R_1$ and $t\ge2$. Obviously the second $B$ of $R_1$ must be attached at the very end of $Q$. However, the second $A$ of $R_1$ may be located either right before the ending run $B^t$, or somewhere within it, as long as the child ends with a run of at least two $B$'s. This brings  exactly $t$ different $r$-patterns --- all children of $Q$ and $R_1$ (see Fig.\ \ref{ESZ7}).

\begin{figure}[ht]
\captionsetup[subfigure]{labelformat=empty}
\begin{center}

\scalebox{1}
{
\centering
\begin{tikzpicture}
[line width = .5pt,
vtx/.style={circle,draw,black,very thick,fill=black, line width = 1pt, inner sep=0pt},
]

%(a)
    \coordinate (0) at (0,0) {};
    \node[vtx] (1) at (1,0) {};
    \node[vtx] (2) at (2,0) {};
    \node[vtx] (3) at (3,0) {};
    \node[vtx] (4) at (4,0) {};
    \node[vtx] (5) at (5,0) {};
    \node[vtx] (6) at (6,0) {};
    \node[vtx] (7) at (7,0) {};
    \coordinate (8) at (7.5,0) {};
    \coordinate (11) at (1,0.7) {};
    \coordinate (21) at (2.5,1) {};

    \draw[line width=0.3mm, color=lightgray]  (0) -- (8);
    \draw[line width=0.5mm, color=blue, outer sep=2mm] (2) arc (0:180:0.5);
    \draw[line width=0.5mm, color=blue, outer sep=2mm] (5) arc (0:180:0.5);
    \draw[line width=0.5mm, color=blue, outer sep=2mm] (6) arc (0:180:0.5);
    \draw[line width=0.5mm, color=red, outer sep=2mm, dashed] (3) arc (0:180:0.5);
    \draw[line width=0.5mm, color=red, outer sep=2mm, dashed] (7) arc (0:180:0.5);
    \draw[line width=0.5mm, color=blue, outer sep=2mm] (1) arc (0:80:1);
    \draw[line width=0.5mm, color=blue, outer sep=2mm] plot [smooth, tension=1] coordinates {(11) (21) (4)};

    \fill[fill=black, outer sep=1mm]  (1) circle (0.1) node [below] {$A$};
    \fill[fill=black, outer sep=1mm]  (2) circle (0.1) node [below] {$A$};
    \fill[fill=black, outer sep=1mm]  (3) circle (0.1) node [below] {$A$};
    \fill[fill=black, outer sep=1mm]  (4) circle (0.1) node [below] {$B$};
    \fill[fill=black, outer sep=1mm]  (5) circle (0.1) node [below] {$B$};
    \fill[fill=black, outer sep=1mm]  (6) circle (0.1) node [below] {$B$};
    \fill[fill=black, outer sep=1mm]  (7) circle (0.1) node [below] {$B$};

(b)
    \coordinate (0) at (8,0) {};
    \node[vtx] (1) at (9,0) {};
    \node[vtx] (2) at (10,0) {};
    \node[vtx] (3) at (11,0) {};
    \node[vtx] (4) at (12,0) {};
    \node[vtx] (5) at (13,0) {};
    \node[vtx] (6) at (14,0) {};
    \node[vtx] (7) at (15,0) {};
    \coordinate (8) at (15.5,0) {};
    \coordinate (11) at (9,0.7) {};
    \coordinate (21) at (10,1) {};

    \draw[line width=0.3mm, color=lightgray]  (0) -- (8);
    \draw[line width=0.5mm, color=blue, outer sep=2mm] (2) arc (0:180:0.5);
    \draw[line width=0.5mm, color=blue, outer sep=2mm] (5) arc (0:180:1);
    \draw[line width=0.5mm, color=blue, outer sep=2mm] (6) arc (0:180:0.5);
    \draw[line width=0.5mm, color=red, outer sep=2mm, dashed] (4) arc (0:180:1);
    \draw[line width=0.5mm, color=red, outer sep=2mm, dashed] (7) arc (0:180:0.5);
    \draw[line width=0.5mm, color=blue, outer sep=2mm] (1) arc (0:80:1);
    \draw[line width=0.5mm, color=blue, outer sep=2mm] plot [smooth, tension=1] coordinates {(11) (21) (3)};

    \fill[fill=black, outer sep=1mm]  (1) circle (0.1) node [below] {$A$};
    \fill[fill=black, outer sep=1mm]  (2) circle (0.1) node [below] {$A$};
    \fill[fill=black, outer sep=1mm]  (3) circle (0.1) node [below] {$B$};
    \fill[fill=black, outer sep=1mm]  (4) circle (0.1) node [below] {$A$};
    \fill[fill=black, outer sep=1mm]  (5) circle (0.1) node [below] {$B$};
    \fill[fill=black, outer sep=1mm]  (6) circle (0.1) node [below] {$B$};
    \fill[fill=black, outer sep=1mm]  (7) circle (0.1) node [below] {$B$};

%(c)
    \coordinate (0) at (3.75,-2.5) {};
    \node[vtx] (1) at (4.75,-2.5) {};
    \node[vtx] (2) at (5.75,-2.5) {};
    \node[vtx] (3) at (6.75,-2.5) {};
    \coordinate (34) at (7.25,-1.5) {};
    \node[vtx] (4) at (7.75,-2.5) {};
    \node[vtx] (5) at (8.75,-2.5) {};
    \node[vtx] (6) at (9.75,-2.5) {};
    \node[vtx] (7) at (10.75,-2.5) {};
    \coordinate (8) at (11.25,-2.5) {};
    \coordinate (11) at (4.75, -1.8) {};
    \coordinate (21) at (5.75,-1.5) {};

    \draw[line width=0.3mm, color=lightgray]  (0) -- (8);
    \draw[line width=0.5mm, color=blue, outer sep=2mm] (2) arc (0:180:0.5);
    \draw[line width=0.5mm, color=blue, outer sep=2mm] (4) arc (0:180:0.5);
    \draw[line width=0.5mm, color=blue, outer sep=2mm] (6) arc (0:180:1);
    \draw[line width=0.5mm, color=red, outer sep=2mm, dashed] (7) arc (0:180:0.5);
    \draw[line width=0.5mm, color=red, outer sep=2mm, dashed] plot [smooth, tension=2] coordinates {(2) (34) (5)};
    \draw[line width=0.5mm, color=blue, outer sep=2mm] (1) arc (0:80:1);
    \draw[line width=0.5mm, color=blue, outer sep=2mm] plot [smooth, tension=1] coordinates {(11) (21) (3)};

    \fill[fill=black, outer sep=1mm]  (1) circle (0.1) node [below] {$A$};
    \fill[fill=black, outer sep=1mm]  (2) circle (0.1) node [below] {$A$};
    \fill[fill=black, outer sep=1mm]  (3) circle (0.1) node [below] {$B$};
    \fill[fill=black, outer sep=1mm]  (4) circle (0.1) node [below] {$B$};
    \fill[fill=black, outer sep=1mm]  (5) circle (0.1) node [below] {$A$};
    \fill[fill=black, outer sep=1mm]  (6) circle (0.1) node [below] {$B$};
    \fill[fill=black, outer sep=1mm]  (7) circle (0.1) node [below] {$B$};

\end{tikzpicture}
}

\end{center}

\caption{Extending the last block of $Q$ by $AABB$ ($t=3$).}
\label{ESZ7}
	
\end{figure}
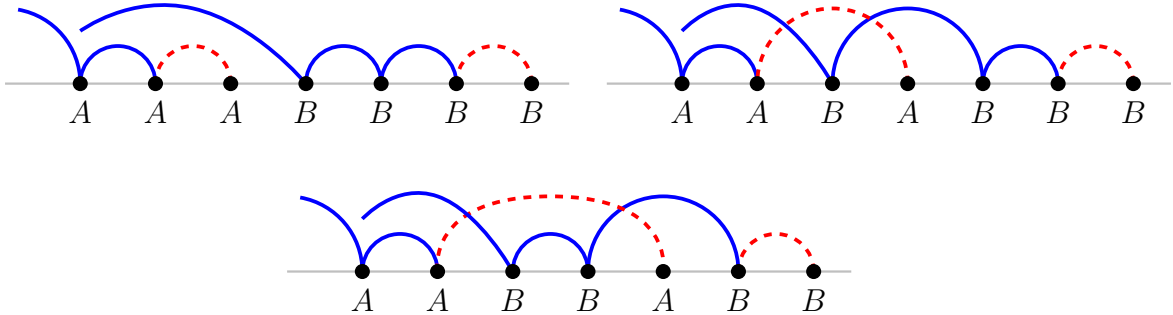
Formally,
\begin{equation*}%\label{children}
\dec^{-1}(Q,AABB)=\{Q'A^{t}B^{t_1}AB^{t_2}:\quad t_1+t_2=t+1,\; t_1\geq 0,\;t_2\geq 2\}.
\end{equation*}
Among these children only the one with $t_1=0$ (and thus, $t_2=t+1$) is collectable, because $Q'$ is. Moreover, its maturity equals $t+1-2=t-1=m(Q)+1$.  For $t_1\ge1$, there is a block $BAB^{t_2}$ at the end with $t_2\geq 2$, which excludes splittability, and thus collectability (cf.\ Proposition \ref{Proposition Collectable-Splitable}). %In the symmetric case, when $Q$ ends with an $A$, the siblings are of the form $Q'B^{t}A^{t_1}BA^{t_2}$.
This completes the proof of part (ii).
\end{proof}

\medskip

In view of Proposition \ref{Proposition Big Brother}, we may classify all collectable patterns into two categories: those that are the only child of their parents and those that have \emph{siblings}. In the latter case, as it was already mentioned earlier, we call them \emph{big brothers} (see Table \ref{table:patterns4} where the three big brothers are marked by (bb)). It follows from  Proposition \ref{Proposition Big Brother}(ii) that a big brother $P$ has a positive maturity $m(P)$ equal to the number of its siblings. For instance, the pattern $P_1=AAAABBBB$ with $m(P_1)=2$ is a big brother with two siblings, $P_1^*=AAABABBB$ and $P_1^{**}=AAABBABB$. It can be easily calculated that, for $r\ge3$, there are $3^{r-3}$ big brothers.

A big brother $P$ together with all its siblings constitute a \emph{$P$-family}. A $\emph{$P$-family-clique}$ is a matching whose every pair of edges forms a pattern belonging to the $P$-family. Our next result states that at least half of any $P$-family-clique makes up a pure $P$-clique. The special case of $r=3$  was already proved in \cite[Prop. 4]{DGR-match}.

%\newpage

\begin{prop}\label{Proposition P-family-clique}
 For $r\ge3$, let an $r$-pattern $P$ be a big brother. Then every  $P$-family-clique of size $k$ contains a  $P$-clique of size at least $k/2$.
\end{prop}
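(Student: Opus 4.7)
The plan is to model the problem as a question about an auxiliary interval graph whose edges encode the big-brother vs.\ sibling dichotomy, and then exploit the no-nesting structure of these intervals.

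Write the common left parent of the entire $P$-family as $Q = Q'A^tB^t$, where $t=t(Q)=m(P)+1\ge 2$, and order the edges of the given $P$-family-clique $M$ as $e_1,\dots,e_k$ by their first vertex. For each edge $e_i$ denote by $c_i$ its $(r-t)$-th vertex and by $k_i$ its $r$-th (last) vertex, and set $I_i:=[c_i,k_i]$. Two facts, valid for every pair $e_i,e_j$ with $i<j$, will drive the argument: the $Q$-pattern puts the $(r-1)$-th vertex of $e_i$ strictly before the $(r-t)$-th vertex of $e_j$, while $R_1=AABB$ puts the last vertex of $e_i$ strictly before both last vertices of $e_j$. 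Combined, they yield $c_i<c_j$ and $k_i<k_j$, so $\{I_i\}$ is a nesting-free family of intervals; moreover, the list of children in Proposition~\ref{Proposition Big Brother}(ii) translates into a clean dichotomy: the pair $(e_i,e_j)$ is the big brother $P$ iff $I_i\cap I_j=\emptyset$, and a sibling iff $I_i$ and $I_j$ cross.

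The central step will be to show that the intersection graph $G$ of $\{I_i\}$ is triangle-free, that is, no three edges of $M$ are pairwise siblings. If $e_i,e_j,e_l$ with $i<j<l$ were such a triple, then the sibling pattern on $(i,j)$ forces the last vertex of $e_i$ to lie strictly between two vertices of $e_j$ of indices in $\{r-t,\dots,r-1\}$, hence below the $(r-1)$-th vertex of $e_j$; the $Q$-pattern on $(j,l)$ then puts that $(r-1)$-th vertex of $e_j$ below the $(r-t)$-th vertex of $e_l$, i.e.\ below $c_l$. Chaining, $k_i<c_l$, contradicting the sibling pattern on $(i,l)$, which requires $c_l<k_i$.

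To finish, I will invoke standard facts about proper interval graphs. Since no $I_i$ nests inside another, $G$ is a proper interval graph, hence both chordal and claw-free. Together with triangle-freeness, claw-freeness forces maximum degree at most two: three neighbors of any vertex would be pairwise non-adjacent (else a triangle would appear) and thus form an induced claw. So $G$ is a disjoint union of paths and cycles; chordality rules out cycles of length $\ge 4$, and triangle-freeness rules out $3$-cycles, leaving only paths. A disjoint union of paths on $k$ vertices has independence number at least $\lceil k/2\rceil$, and any such independent set is, by our dichotomy, a $P$-clique in $M$. The main obstacle is the triangle-free claim, which requires the precise positional bookkeeping across three edges outlined above; everything else follows from well-known properties of proper interval graphs.
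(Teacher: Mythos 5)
Your proof is correct, and it reaches the conclusion by a genuinely different (if heavier) route than the paper. Your interval encoding is sound: by Proposition~\ref{Proposition Big Brother}(ii) the family consists of $Q'A^{t+1}B^{t+1}$ and the words $Q'A^tB^{t_1}AB^{t_2}$ ($t_1\ge1$, $t_2\ge2$), so for $i<j$ the pair is the big brother exactly when $[c_i,k_i]$ and $[c_j,k_j]$ are disjoint and a sibling exactly when they cross, and nesting never occurs; your positional chain $k_i<(\text{$(r-1)$-st vertex of }e_j)<c_l$ correctly rules out $e_i,e_l$ being siblings, and the proper-interval facts (claw-freeness, chordality) you invoke are standard, so the sibling graph is a disjoint union of paths and an independent set of size $\lceil k/2\rceil$ is a $P$-clique. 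The paper argues more directly: dropping the last vertex of every edge yields a $Q$-clique ending in the block $A_1^t\cdots A_k^t$, and the re-appended last vertex of $e_i$ must land after the end of the $A_i$-run but before the end of the $A_{i+1}$-run (else the right parent of $e_i,e_{i+1}$ would not be $AABB$); hence \emph{every} pair $e_i,e_j$ with $j\ge i+2$ forms the big brother, and the odd-indexed edges are explicitly a $P$-clique of size $\lceil k/2\rceil$. Note also that your triangle-freeness chain actually proves the stronger fact that once $e_i,e_j$ ($i<j$) are siblings, $e_i$ forms the big brother with every $e_l$, $l>j$; this gives ``at most one sibling to the right of each edge'' and the $\lceil k/2\rceil$ bound immediately, letting you bypass claw-freeness, chordality, and the independence-number step altogether.
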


\begin{proof}
  Let an $r$-pattern $P$ be a big brother  with $\dec(P)=(Q,AABB)$ for some collectable $(r-1)$-pattern $Q$. We express $Q=Q'A^tB^t$ for some $t\ge2$ and a collectable $(r-t)$-pattern $Q'$ ($Q'=\emptyset$ when $t=r$). Let $K_P=\{e_1,\dots,e_k\}$ be a $P$-family-clique of size $k$, with edge $e_i$ represented by letter $A_i$, $i=1,\dots,k$. Further, let $K_Q$ be the $(r-1)$-matching obtained from $K_P$ by dropping last letter from each $e_i$. Then $K_Q$ is a $Q$-clique which, as $Q$ is collectable, ends in a block $T=A_1^{t}\cdots  A_k^{t}$ (w.l.o.g., we assume that the letters appear in that order).
   
   Now, for each $i=1,\dots,k-1$, the last (i.e., the $r$-th) letter $A_i$ of edge $e_i$ in $K_P$ must be located after the last letter $A_i$ in $T$ but before the last letter $A_{i+1}$ in $T$ (and the last letter $A_k$ must go at the very end), as otherwise edges $e_i, e_{i+1}$ would  form a pattern whose right parent is not $AABB$. Consequently, for all pairs $(i,j)$ with $j\ge i+2$, edges $e_i,e_j$ form a collectable pattern of the form $Q'A_i^{t+1}A_j^{t+1}$, so this pattern must be the big brother $P$ (and not any of its siblings).
  Hence, the set of  all edges $e_i\in K_P$ with odd $i$ forms a $P$-clique of size $\lceil k/2\rceil$.
\end{proof}

\subsection{The  inductive  proof}\label{section:hyper_new}

The proof of Theorem \ref{Theorem ESz General_classic} is by induction on $r$ with the base case of $r=2$ holding by Theorem \ref{Theorem E-S for LSW} proved in \cite[Theorem 1]{DGR-match}. (For the sake of completeness, we have reproved it in Appendix~\ref{appendix}.) For part (a), we found it convenient to prove an equivalent statement, Theorem \ref{Theorem ESz General_real} below, which uses real-valued parameters.
To allow parameters strictly smaller than one we adopt the convention that a single edge of an $r$-matching forms by itself a $P$-clique for any $r$-pattern $P$.

  \begin{thm}\label{Theorem ESz General_real}
	For $r\ge 2$, let $P_1,\dots,P_{3^{r-1}}$ be all collectable patterns and let positive real numbers $x_1,\dots, x_{3^{r-1}}$ be given. If $n>\prod_{i=1}^{3^{r-1}}x_i$, then every matching $M\in\cM_n^{(r)}$ contains a $P_i$-clique of size greater than
$x_i2^{-m(P_i)}$, for some $i\in[3^{r-1}]$.
\end{thm}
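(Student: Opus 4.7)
I would prove Theorem~\ref{Theorem ESz General_real} by induction on $r$. For the base case $r=2$ all three collectable patterns have $m(P)=0$, and the claim asks for a $P_i$-clique of size $>x_i$ given $n>x_1x_2x_3$. I would dispatch any index $i$ with $x_i<1$ by taking a single edge (which, by the convention stated just before the theorem, is a $P_i$-clique of size $1$); otherwise every $x_i\ge 1$ and Theorem~\ref{Theorem E-S for LSW}, applied with integers $\ell=\lfloor x_1\rfloor$, $s=\lfloor x_2\rfloor$, $w=\lfloor x_3\rfloor$, yields a clique of size at least $\lfloor x_i\rfloor+1>x_i$ for some $i$ because $\ell s w\le x_1x_2x_3<n$.

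For the inductive step, I would pass from $M$ to the $(r-1)$-matching $M'$ obtained by deleting the maximum vertex of each edge. Two edges of $M$ forming pattern $P$ project to two edges of $M'$ forming the left parent $Q(P)$, and by Proposition~\ref{Proposition Big Brother} every collectable $r$-pattern has a collectable left parent while every collectable $(r-1)$-pattern $Q$ is the left parent of exactly three collectable $r$-patterns $P^{(Q,R_j)}$, one per right parent $R_j\in\{R_1,R_2,R_3\}$. Thus the index set $[3^{r-1}]$ partitions into triples $I_Q=\{i:Q(P_i)=Q\}$. Setting
\[
 y_Q:=\prod_{i\in I_Q}x_i,
\]
one gets $\prod_Q y_Q=\prod_i x_i<n$, so the inductive hypothesis applied to $M'$ yields a $Q$-clique in $M'$ of size $k>y_Q\cdot 2^{-m(Q)}$ for some collectable $(r-1)$-pattern $Q$.

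With that in hand, I would form the ``tail'' $2$-matching $N:=\{\{u_i,v_i\}:i\in[k]\}$, where $u_i$ and $v_i$ are the last two vertices of the $i$th edge of the $Q$-clique in $M$. Directly from the definition of $R(P)$, the $2$-pattern of any pair in $N$ coincides with the right parent of the corresponding $r$-pattern in $M$. Applying the base case to $N$ with real parameters
\[
 z_1:=x_{P^{(Q,R_1)}}\cdot 2^{-m(Q)},\qquad z_j:=x_{P^{(Q,R_j)}}\;\text{for } j=2,3,
\]
which satisfy $z_1z_2z_3=2^{-m(Q)}y_Q<k=|N|$, I would obtain an $R_j$-$2$-clique in $N$ of size $k'>z_j$ for some $j\in\{1,2,3\}$.

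Finally, I would lift this $2$-clique back to $M$. If $j\in\{2,3\}$, or if $j=1$ with $t(Q)=1$, Proposition~\ref{Proposition Big Brother}(i) guarantees that $P=P^{(Q,R_j)}$ is the unique child of $(Q,R_j)$ and satisfies $m(P)=0$, so the $2$-clique pulls back directly to a $P$-clique of size $k'>z_j=x_P\cdot 2^{-m(P)}$. In the remaining case $j=1$ with $t(Q)\ge 2$, the $2$-clique pulls back to a $P$-family-clique where $P$ is a big brother, and Proposition~\ref{Proposition P-family-clique} extracts a genuine $P$-clique of size at least $k'/2>z_1/2=x_P\cdot 2^{-m(Q)-1}=x_P\cdot 2^{-m(P)}$, using $m(P)=m(Q)+1$ from Proposition~\ref{Proposition Big Brother}(ii). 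The main subtlety I expect is precisely this bookkeeping: the extra factor $2^{-m(Q)}$ in $z_1$ is calibrated so that the factor-of-two loss from Proposition~\ref{Proposition P-family-clique} exactly absorbs the unit of maturity gained when passing from $Q$ to its big-brother child, making all three cases align with the claimed bound $x_i\cdot 2^{-m(P_i)}$.
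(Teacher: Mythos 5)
Your proposal is correct and follows essentially the same route as the paper: induction on $r$ via the base case from Theorem~\ref{Theorem E-S for LSW}, projecting to the left-parent $(r-1)$-matching, applying the $r=2$ case to the tail $2$-matching with the deflated parameter $z_1=x\,2^{-m(Q)}$, and resolving the big-brother case through Propositions~\ref{Proposition Big Brother} and~\ref{Proposition P-family-clique} with exactly the same maturity bookkeeping $m(P)=m(Q)+1$. No gaps.
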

   To see the equivalence, let $a_i=\lfloor x_i\rfloor$, $i=1,\dots, 3^{r-1}$, and observe that $n\ge\prod_{i=1}^{3^{r-1}}a_i+1$. Thus, by Theorem~\ref{Theorem ESz General_classic}(a), for some $i$, there is a $P_i$-clique of size greater than $a_i2^{-m(P_i)}$, that is, of size at least $(a_i+1)2^{-m(P_i)}>x_i2^{-m(P_i)}$.
 Inversely, by choosing integer values for the numbers $x_i$, Theorem \ref{Theorem ESz General_real} trivially implies Theorem \ref{Theorem ESz General_classic}(a). So, the two statements are actually equivalent and, in particular, for $r=2$, Theorem \ref{Theorem ESz General_real} follows from Theorem \ref{Theorem E-S for LSW}.

\begin{proof}[Proof of Theorem \ref{Theorem ESz General_real}]
	Let $r\geq3$ and assume that the statement is true for $r-1$. Let $x_1,\ldots, x_{3^{r-1}}$ be positive real numbers and let $n>\prod_{i=1}^{3^{r-1}}x_i$ be an integer. Let $P_1,\ldots, P_{3^{r-1}}$ be all collectable patterns of uniformity $r$ and let $Q_1,\ldots, Q_{3^{r-2}}$ be all collectable patterns of uniformity $r-1$. We  assume, for convenience, that this enumeration is chosen, as in Tables 1 and 2, so that $\dec(P_{3(i-1)+j})=(Q_i,R_j)$, where $R_1=AABB$, $R_2=ABBA$, and $R_3=ABAB$ (or their dual versions). Let us also define $y_i=\prod_{j=1}^{3}x_{3(i-1)+j}$ for all $i=1,2,\ldots,3^{r-2}$. Notice that $\prod_{i=1}^{3^{r-2}}y_i=\prod_{i=1}^{3^{r-1}}x_i<n$.
	
	Consider  a matching $M\in\cM_n^{(r)}$. Let $M_1$ be the $(r-1)$-matching  obtained from $M$ by deleting the last vertex of every edge. Thus, $M_1\in\cM_n^{(r-1)}$. Apply the induction hypothesis to $M_1$ with the patterns $Q_i$ and the numbers $y_i$, $i=1,\dots,3^{r-2}$, defined above, obtaining, for some $i$, a $Q_i$-clique $K_i$ in $M_1$ of size $q_1>y_i/2^{m(Q_i)}$.
	
	Let us look now at the 2-matching $M_2$ formed by the pairs of the last two vertices of all edges of $M$ whose set of the first $r-1$ vertices belongs to $K_i$. Formally,
$$M_2=\big{\{}\{i_{r-1},i_r\}:\; \{i_1<\cdots<i_r\}\in M\quad\mbox{and}\quad \{i_1,\dots,i_{r-1}\}\in K_i\big{\}}.$$
Clearly, $M_2\in\cM_{q_1}^{(2)}$ and we may apply to it the case $r=2$ of Theorem \ref{Theorem ESz General_real} with
$$z_1=x_{3(i-1)+1}/2^{m(Q_i)}\quad\mbox{and}\quad z_j=x_{3(i-1)+j}\quad\mbox{for}\quad j=2,3$$
(note that $q_1>z_1z_2z_3$). Hence, for some $j\in \{1,2,3\}$, we get an $R_j$-clique $L_j$ in $M_2$ of size $q_2>z_j$.
	
Let us now extend the edges of $L_j$ back to the original $r$-edges of $M$, obtaining  a sub-matching $M_3$ of $M$ of size $q_2$.
Formally,
$$M_3=\big{\{}\{i_1<\cdots<i_r\}\in M:\;\{i_1,\dots,i_{r-1}\}\in K_i\quad\mbox{and}\quad\{i_{r-1},i_r\}\in L_j\big{\}}.$$

By the definition of function $\dec$, every pair of edges in $M_3$ forms a pattern $P$ which decomposes into $(Q_i,R_j)$. Assume first that $j\in\{2,3\}$, or $j=1$, but then $t(Q_i)=1$ and thus, $m(Q_i)=0$. In all these three cases, by our enumeration scheme defined at the beginning of the proof, $P=P_{3(i-1)+j}$. Moreover, by Proposition \ref{Proposition Big Brother}(i), the pattern $P$ is unique and, consequently, $M_3$ forms a $P$-clique of size  $q_2>z_j=x_{3(i-1)+j}$.

 In the remaining case, when $j=1$ and $t(Q_i)\ge2$, by Proposition \ref{Proposition Big Brother}(ii),  the pattern $P_{3(i-1)+1}$ is a big brother with $m(P_{3(i-1)+1})=t(Q_i)-1=m(Q_i)+1$. Then $M_3$ is a $P_{3(i-1)+1}$-family-clique of size $q_2>z_1$. By Proposition \ref{Proposition P-family-clique}, $M_3$ contains a $P_{3(i-1)+1}$-clique of size at least $$q_2/2>z_1/2=x_{3(i-1)+1}/2^{m(Q_i)+1}=x_{3(i-1)+1}/2^{m(P_{3(i-1)+1})},$$
 which completes the proof.
\end{proof}

The proof of part (b) of Theorem \ref{Theorem ESz General_classic} is very similar and even simpler, as there is no reason to resort to the real-valued version. Moreover, for a clean matching $M\in \mathcal{M}_n^{(r)}$ we do not need to worry about the maturities and the case $j=1$, $t(Q_i)\ge2$, does not differ from the others. For the induction step, however, it is crucial to observe that if $M$ is clean, then so is $M_1$ --- the $(r-1)$-matching obtained from $M$ by deleting the last vertex of every edge. Indeed, removing the last $A$ and the last $B$ from a splittable $r$-pattern results, obviously, in a splittable $(r-1)$-pattern.

%\begin{rem} When setting the values of $z_1,z_2,z_3$ in the above proof, we have assigned the whole denominator $2^{m(Q_i)}$ to $z_1$. This way, in the conclusion of Theorem \ref{Theorem ESz General_real} we kept as many as possible $P_i$-clique sizes clean, that is,  greater than $a_i$, on the cost of diminishing a few others substantially. But we could have done it differently, e.g., always  attaching the factor of $1/2$ to $z_2$. This way, in the conclusion  we would have precisely $(3^{r-2}-1)/2$ patterns $P_i$ for which  the guaranteed  clique size would be only greater than $a_i/2$  (for all other patterns it would be $a_i$).  Among them there would be $(3^{r-3}-1)/2$ ones inherited from  the induction assumption plus $3^{r-3}$  newly created (big brothers). In some sense such a statement, with as much fairness as one can get without turning to rational numbers, would be a $0-1$ analog of Corollary \ref{Theorem ESz General_fair}. We decided, however, not to dig too much into these alternatives.
%\end{rem}

\section{Proof of Theorem \ref{thm:random}}\label{section:random}
In this section we prove Theorem \ref{thm:random} which provides estimates of the  size of the largest $P$-clique one can find in a \emph{random} ordered $r$-uniform matching.
Recall that $\rm^{(r)}_{n}$ denotes a random (ordered) $r$-matching of size $n$, that is, an $r$-matching  picked uniformly at random out of the set of all
\[
\alpha^{(r)}_n:=\frac{(rn)!}{(r!)^n\, n!}
\]
matchings $M\in \mathcal{M}_n^{(r)}$ on the set $[rn]$. There are two other equivalent ways of drawing $\rm^{(r)}_{n}$, the \emph{permutational scheme} and the \emph{online scheme}.

 The formula for $\alpha^{(r)}_n$ indicates that each ordered $r$-matching can be coupled with exactly $(r!)^n n!$ permutations. Indeed, one can generate an ordered $r$-matching by the following permutational scheme. Let $\pi$ be a permutation of $[rn]$. We chop $\pi$  into an $r$-matching $\{\{\pi(1),\dots,\pi(r)\}, \{\pi(r+1),\dots,\pi(2r)\},\dots,\{\pi(rn-r-1),\dots,\pi(rn)\}\}$ and, clearly, there are exactly $(r!)^n n!$ permutations $\pi$ yielding the same matching. This scheme  allows one to use concentration inequalities for random permutations in the context of random matchings (see Subsection \ref{lb-gen}).

The online scheme of generating $\rm^{(r)}_{n}$ goes as follows. Given an arbitrary ordering of the vertices $u_1,\dots,u_{rn}$ (not necessarily the same as the canonical ordering $1,2,\dots,rn$) one selects uniformly at random   an $(r-1)$-element set $\{u_{j_1},\dots,u_{j_{r-1}}\}$ (in $\binom{rn-1}{r-1}$ ways) to be matched with $u_1$. Then, after crossing out $u_1,u_{j_1},\dots,u_{j_{r-1}}$ from the list, one selects uniformly at random an  $(r-1)$-element set (in $\binom{rn-r-1}{r-1}$ ways) to be matched with the first  uncrossed vertex, and so on, and so forth.  This scheme comes in handy when estimating probabilities of events involving small sets of fixed vertices (see Subsection \ref{lb-gen} and Appendix~C).

\subsection{Upper bound}

We begin with an upper bound. A slightly more general result was already stated in \cite[Prop. 11]{DGR-match} but here we present it again, just for $P$-cliques, for the sake of completeness.
\begin{lemma}\label{lem:upbr}
Let $P$ be a collectable $r$-pattern. Then, a.a.s.\ the size of the largest $P$-clique in a random $r$-matching $\rm^{(r)}_{n}$ is~$O_r(n^{1/r})$.
\end{lemma}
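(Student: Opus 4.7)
The plan is a standard first-moment argument. Let $X_k$ denote the number of $P$-cliques of size $k$ contained in $\rm^{(r)}_n$.

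First, I would show that for every $rk$-element subset $S\subset[rn]$ there is exactly one $P$-clique of size $k$ whose vertex set equals $S$. This uses the collectability of $P$: by Proposition~\ref{Proposition Collectable-Splitable}, $P$ has a unique splittable partition into blocks $X_1,\dots,X_j$ with $|X_i|=2s_i$, each of the form $A^{s_i}B^{s_i}$ or $B^{s_i}A^{s_i}$ and $\sum_i s_i=r$. The $rk$ ordered vertices of $S$ then split canonically into consecutive chunks of sizes $ks_1,\dots,ks_j$, and inside the $i$-th chunk the $k$ edges receive their $s_i$-runs in the natural (respectively reversed) order prescribed by $X_i$. Consequently, the total number of candidate $P$-cliques on $[rn]$ equals exactly $\binom{rn}{rk}$.

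Next, since $\rm^{(r)}_n$ is uniform on $\mathcal{M}_n^{(r)}$, any fixed $k$-edge matching is contained in $\rm^{(r)}_n$ with probability $\alpha^{(r)}_{n-k}/\alpha^{(r)}_n$, the remaining $r(n-k)$ vertices hosting an arbitrary matching. Multiplying and simplifying the factorials yields
\[
\E[X_k]\;=\;\binom{rn}{rk}\,\frac{\alpha^{(r)}_{n-k}}{\alpha^{(r)}_n}\;=\;\frac{(r!)^k}{(rk)!}\,\frac{n!}{(n-k)!}.
\]

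Finally, I would apply the standard bounds $(rk)!\ge (rk/e)^{rk}$ and $n!/(n-k)!\le n^k$ to get
\[
\E[X_k]\;\le\;\left(\frac{r!\,e^r\,n}{r^r k^r}\right)^{\!k},
\]
which tends to $0$ as soon as $k=\lceil C_r n^{1/r}\rceil$ for a constant $C_r$ chosen so that the base is strictly less than $1$ (any $C_r>e$ works, uniformly in $r$, because $r!/r^r\le 1$). Markov's inequality then forces $X_k=0$ a.a.s., which is exactly the claimed $O_r(n^{1/r})$ upper bound. The only non-computational step is the uniqueness claim in the first paragraph, and even this is immediate from the splittable structure of $P$; the remainder is a routine calculation with Stirling, so I do not expect any real obstacle.
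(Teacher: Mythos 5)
Your proposal is correct and follows essentially the same route as the paper: a first-moment computation using the unique placement of a $P$-clique on each $rk$-set (the paper asserts this uniqueness without elaboration, while you justify it via splittability), the identical expectation formula $\binom{rn}{rk}\alpha^{(r)}_{n-k}/\alpha^{(r)}_n$, the same Stirling-type bounds, and Markov's inequality with $k=\lceil C_r n^{1/r}\rceil$, where your condition $C_r>e$ is a valid (slightly cruder) sufficient version of the paper's threshold $c>\tfrac{e}{r}(r!)^{1/r}$.
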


\begin{proof} For every integer $k\ge2$, let $X_k$ be the number of $P$-cliques of size $k$ in
$\rm^{(r)}_n$. In order to compute the expectation of $X_k$, one has to choose a set $S$ of $rk$ vertices (out of all $rn$ vertices) on which a $P$-clique will be planted. Formally, for each $S\in\binom{[rn]}{rk}$ define an indicator random variable $I_S=1$ if there is a $P$-clique on $S$ in $\rm^{(r)}_n$, and $I_S=0$ otherwise. As there is just one way to plant a $P$-clique on a given set $S$, we have  $X_k=\sum_SI_S$ and
\[
\E I_S = \Pr(I_S=1)=\frac{\alpha^{(r)}_{n-k}}{\alpha^{(r)}_n}.
\]
Thus, by the linearity of expectation,
	\[
	\E X_k = \binom{rn}{rk}  \cdot \frac{\alpha^{(r)}_{n-k}}{\alpha^{(r)}_n}
	= \frac{(r!)^k}{(rk)!} \cdot \frac{n!}{(n-k)!}
	\le \frac{(r!)^k}{(rk)!} \cdot n^k
	\le \frac{(r!)^k}{(rk/e)^{rk}} \cdot n^k
	= \left( \frac{e^r r! n}{(rk)^r} \right)^k.
	\]
Fixing a constant $c>\tfrac er(r!)^{1/r}$ and setting  $k_0=\lceil cn^{1/r}\rceil$, we conclude, by Markov's inequality, that
$$\Pr(X_{k_0}>0)\le\E X_{k_0}\le\left( \frac{e^r r! }{(rc)^r} \right)^{k_0}=o(1).$$
This completes the proof.
\end{proof}

\subsection{Lower bound}\label{lb-gen}
In this subsection we prove the lower bound in Theorem~\ref{thm:random} for all collectable patterns $P$. % including alignment $P_1$.
We start with a straightforward estimate which generalizes one from \cite[Lemma 2.2]{ConlonFoxSudakov}, though with a worse constant.
For given disjoint subsets $A_1,\dots,A_r \subseteq [rn]$ we say that an edge $e$ of an $r$-matching \emph{spans} these sets if $|e\cap A_j|=1$ for each $1\le j\le r$ (see Fig.\ \ref{span}).

\begin{figure}[ht]
\captionsetup[subfigure]{labelformat=empty}
\begin{center}

\scalebox{1}
{
\centering
\begin{tikzpicture}
[line width = .5pt,
vtx/.style={circle,draw,black,very thick,fill=black, line width = 1pt, inner sep=0pt},
]

    \node[vtx] (1) at (0,0) {};
    \node[vtx] (2) at (2,0) {};
    \node[vtx] (3) at (4,0) {};
    \coordinate (34) at (6,1) {};
    \node[vtx] (4) at (8,0) {};
    \node[vtx] (5) at (10,0) {};

    \draw[line width=0.3mm, color=lightgray]  (1) -- (5);

    \draw[line width=2.5mm, color=gray, line cap=round]  (1.5,0) -- (2.5,0);
    \draw[line width=2.5mm, color=gray, line cap=round]  (3.75,0) -- (4.75,0);
    \draw[line width=2.5mm, color=gray, line cap=round]  (7.25,0) -- (8.25,0);

    \node[color=gray] at (2.6,-0.35) {$A_1$};
    \node[color=gray] at (4.85,-0.35) {$A_2$};
    \node[color=gray] at (8.35,-0.35) {$A_3$};
    \node[color=blue] at (4,0.7) {$e$};

    \draw[line width=0.5mm, color=blue, outer sep=2mm] plot [smooth, tension=2] coordinates {(3) (34) (4)};
    \draw[line width=0.5mm, color=blue, outer sep=2mm] (3) arc (0:180:1);

    \fill[fill=black, outer sep=1mm]   (1) circle (0.1) node [below] {$1$};
    \fill[fill=black, outer sep=1mm]   (2) circle (0.1);% node [below] {$i_1$};
    \fill[fill=black, outer sep=1mm]  (3) circle (0.1);% node [below] {$i_2$};
    \fill[fill=black, outer sep=1mm]   (4) circle (0.1);% node [below] {$i_3$};
    \fill[fill=black, outer sep=1mm]   (5) circle (0.1) node [below] {$3n$};

\end{tikzpicture}
}

\end{center}

\caption{An edge spanning three sets.}
\label{span}
			
\end{figure}
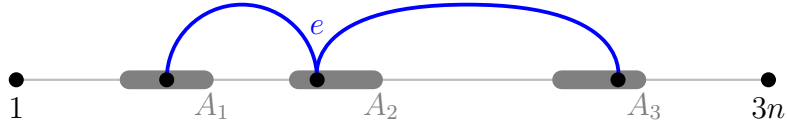

\begin{lemma}\label{lemma:span_prob} Given integers $r\ge2$ and  $2r\le t\le n$,
	let $A_1,\dots,A_r$ be disjoint subsets of $[rn]$ each of size $t$. Then, the probability that there are no edges in $\rm^{(r)}_{n}$ spanning all these sets is at most $\exp\left\{-\frac{1}{(2r)^r} \cdot \frac{t^r}{n^{r-1}}\right\}$.
\end{lemma}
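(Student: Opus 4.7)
The plan is to invoke the online scheme for $\rm_n^{(r)}$, but modified so that every revealed edge is guaranteed to contain a vertex of $A_1$. Set $K:=\lfloor t/r\rfloor$, which is at least $2$ since $t\ge 2r$. For each step $k=1,\ldots,K$, let $u_k$ denote the smallest remaining vertex in $A_1^{(k)}:=A_1\setminus(e_1\cup\cdots\cup e_{k-1})$, and expose the edge $e_k$ of $\rm_n^{(r)}$ containing $u_k$: conditionally on $\cF_{k-1}:=\sigma(e_1,\ldots,e_{k-1})$, the partner set $e_k\setminus\{u_k\}$ is a uniformly random $(r-1)$-subset of the remaining $rn-r(k-1)-1$ vertices. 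Since every previous edge uses at most $r$ vertices from $A_1$, the bound $|A_1^{(k)}|\ge t-r(k-1)\ge r>0$ ensures that $u_k$ is well-defined throughout.

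Denote by $q_k:=\Pr(e_k\text{ spans }A_1,\ldots,A_r\mid \cF_{k-1})$ the conditional spanning probability. Because $u_k\in A_1$, the edge $e_k$ spans if and only if its $(r-1)$ partners pick exactly one vertex from each $A_j$ with $j\ge 2$, so
\[
q_k = \frac{\prod_{j=2}^r |A_j^{(k)}|}{\binom{rn-r(k-1)-1}{r-1}}.
\]
Each prior edge contributes at most $r-1$ partners, hence at most $r-1$ vertices to any $A_j$ with $j\ge 2$, giving $|A_j^{(k)}|\ge t-(r-1)(k-1)$. Combining this with $\binom{N}{r-1}\le N^{r-1}/(r-1)!$ yields the deterministic lower bound
\[
q_k \ge q_k^\ast := \frac{(r-1)!\,\bigl(t-(r-1)(k-1)\bigr)^{r-1}}{(rn)^{r-1}}.
\]

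Let $X_k$ indicate the event that $e_k$ spans. A standard tower argument (iteratively conditioning on $\cF_{k-1}$ and exploiting that $q_k\ge q_k^\ast$ holds pointwise) gives
\[
\Pr\bigl(\text{no edge of }\rm_n^{(r)}\text{ spans}\bigr) \le \Pr(X_1=\cdots=X_K=0) \le \prod_{k=1}^K (1-q_k^\ast) \le \exp\Bigl(-\sum_{k=1}^K q_k^\ast\Bigr).
\]
It remains to verify that $\sum_{k=1}^K q_k^\ast \ge t^r/((2r)^r n^{r-1})$. Comparing the decreasing sum with its integral,
\[
\sum_{k=1}^K \bigl(t-(r-1)(k-1)\bigr)^{r-1} \ge \int_0^K (t-(r-1)x)^{r-1}\,dx = \frac{t^r-(t-(r-1)K)^r}{r(r-1)},
\]
and using $K\ge t/r-1$ to get $t-(r-1)K \le t/r+(r-1) \le t(r+1)/(2r)$ (the last inequality requires precisely $t\ge 2r$), one obtains $\sum_{k=1}^K q_k^\ast \ge \frac{(r-2)!\,(1-((r+1)/(2r))^r)}{r^r}\cdot \frac{t^r}{n^{r-1}}$. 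This exceeds the target because the inequality $2^r(r-2)!\,(1-((r+1)/(2r))^r)\ge 1$ is easily checked for every $r\ge 2$. I expect this last constant-tracking step to be the main technical hurdle, since the factor must be handled carefully for small $r$ to beat the stated constant $1/(2r)^r$.
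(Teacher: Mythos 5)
Your proof is correct and follows essentially the same route as the paper's: reveal $\rm^{(r)}_n$ online by repeatedly matching the next unmatched vertex of $A_1$, lower-bound the conditional spanning probability at each step, multiply via the chain rule, and exponentiate. The only difference is bookkeeping—you run $\lfloor t/r\rfloor$ steps and sum the per-step bounds by an integral comparison plus an explicit constant check, whereas the paper stops after $\lceil t/(2r)\rceil$ steps and uses the uniform worst-case factor $1-\bigl(t/(2rn)\bigr)^{r-1}$—and your constant verification $2^r(r-2)!\bigl(1-((r+1)/(2r))^r\bigr)\ge 1$ does hold for all $r\ge 2$.
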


\begin{proof} Let $\mathcal E$ be the event that no edge in $\rm^{(r)}_{n}$ spans the sets  $A_1,\dots,A_r$.
	 We will  generate $\rm^{(r)}_{n}$ using the online scheme with the vertices of $A_1$ coming first. We denote by $e_1,e_2,\dots$ the edges drawn randomly along this scheme. Let $t'=\lceil t/(2r)\rceil$ and ${\mathcal E}_i$, $1\le i\le t'$, be the event that $e_i$ does not span  $A_1,\dots,A_r$. Then, by the chain formula,
\begin{equation}\label{PPP}
\PP(\mathcal E)\le\PP\left(\bigcap_{i=1}^{t'}{\mathcal E}_i\right)=\PP({\mathcal E}_1)\PP({\mathcal E}_2|{\mathcal E}_1)\cdots\PP\left({\mathcal E}_{t'}\Big|\bigcap_{i=1}^{t'-1}{\mathcal E}_i\right).
\end{equation}

Let $a_1\in A_1$ be the first vertex to get its match in the online scheme. There are exactly $t^{r-1}$ $r$-element sets of vertices containing $a_1$ which do span the sets $A_1,\dots,A_r$. Thus, the probability that $e_1$ is not any of them
is 
	\[
	\PP({\mathcal E}_1)=1-\frac{t^{r-1}}{\binom{rn-1}{r-1}}.
	\]

%First, we reveal the first edge having $a$ as one of its vertices. The remaining $r-1$ vertices must be disjoint with some $A_j$ for some $2\le j\le r$. Thus, the probability that an edge containing $a_1$ is not spanning $A_1,\dots,A_r$
	Note that $|A_1\setminus e_1|\ge t-r>0$ and let $a_2$ be the first vertex in $A_1\setminus e_1$. Since  $|A_i\cap e_1|\le r-1$ for each $j=2,\dots,r$, the number of $r$-element sets of vertices containing $a_2$, disjoint from $e_1$, and spanning the sets $A_1,\dots,A_r$ is at least $(t-r+1)^{r-1}$. Thus,

	\[
	\PP({\mathcal E}_2|{\mathcal E}_1)\le1-\frac{ (t-r+1)^{r-1}}{\binom{rn-r-1}{r-1}}.
	\]

	We repeat this process $t'$ times, matching randomly that many vertices from~$A_1$. Note that after $t'-1$ rounds, by the choice of $t'$,  the number of available vertices in each $A_j$, $j\ge2$, is  at least $t-(t'-1)(r-1)>t/2$. Hence, there is still a vertex $a_{t'}\in A_1\setminus(e_1\cup\cdots\cup e_{t'-1})$ and 
$$\PP\left({\mathcal E}_{t'}\Big|\bigcap_{i=1}^{t'-1}{\mathcal E}_i\right)\le1-\frac{ [t-(t'-1)(r-1)]^{r-1}}{\binom{rn-(t'-1)r-1}{r-1}}.$$
Hence, by \eqref{PPP},
	$$
		\PP(\mathcal E)\le\left(1-\frac{t^{r-1}}{\binom{rn-1}{r-1}} \right) \left(1-\frac{ (t-r+1)^{r-1}}{\binom{rn-r-1}{r-1}}\right) \ldots
		\left(1-\frac{ [t-(t'-1)(r-1)]^{r-1}}{\binom{rn-(t'-1)r-1}{r-1}}\right).
	$$
	We bound each factor from above, taking the smallest numerator and the largest denominator, by
	\[
	1 - \frac{(t/2)^{r-1}}{\binom{rn-1}{r-1}}
	\le 1 - \frac{(t/2)^{r-1}}{(rn)^{r-1}}
	= 1 - \left(\frac{t}{2rn} \right)^{r-1}
	\le \exp\left\{{- \left(\frac{t}{2rn} \right)^{r-1}}\right\}.
	\]
	Thus,
	\[
	\PP(\mathcal E)\le \exp\left\{{- t' \left(\frac{t}{2rn} \right)^{r-1}}\right\}
	\le \exp\left\{{-\frac{1}{(2r)^r} \cdot \frac{t^r}{n^{r-1}}}\right\}.
	\]
\end{proof}

\begin{rem}
In Appendix~\ref{lemma:span_prob:2} we provide an alternative proof of the above lemma (with a worse constant), based on Chernoff's bound for hypergeometric distribution, in which the online scheme is replaced by the permutational scheme.
\end{rem}

%\medskip

Another ingredient of the proof of the lower bound in Theorem~\ref{thm:random} is a Talagrand's concentration inequality for random permutations from \cite{Talagrand}. We quote here a slightly simplified version from \cite{LuczakMcDiarmid} (see also \cite{McDiarmid}). Let ${\Pi}_N$ be a random permutation of order $N$.
%In particular, this will also imply Theorem~\ref{lower_line}. However, the proof in the general case, compared to the one for lines, is far less elementary, since it

\begin{theorem}[Luczak and McDiarmid \cite{LuczakMcDiarmid}]\label{tala}
	Let $h(\pi)$ be a function defined on the set of all permutations of order $N$ which, for some positive constants $c$ and $d$, satisfies
	\begin{enumerate}[label=\rmlabel]
		\item\label{thm:talagrand:i} if $\pi_2$ is obtained from $\pi_1$ by swapping two elements, then $|h(\pi_1)-h(\pi_2)|\le c$;
		\item\label{thm:talagrand:ii} for each $\pi$ and $s>0$, if $h(\pi)=s$, then in order to show that $h(\pi)\ge s$, one needs to specify only at most $ds$ values $\pi(i)$.
	\end{enumerate}
	Then, for every $\eps>0$,
	$$\PP(|h({\Pi}_N)-m|\ge \eps m)\le4 \exp(-\eps^2 m/(32dc^2)),$$
	where $m$ is the median of $h({\Pi}_N)$.
\end{theorem}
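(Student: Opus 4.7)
The plan is to derive the inequality from Talagrand's convex-distance method adapted to the uniform measure on the symmetric group $S_N$. For $\pi \in S_N$ and $A \subseteq S_N$, define the convex distance
$$d_T(\pi, A) \;=\; \sup_{\alpha \in \RR_{\ge 0}^N,\ \|\alpha\|_2 \le 1}\ \inf_{\sigma \in A}\ \sum_{i\,:\, \pi(i) \ne \sigma(i)} \alpha_i.$$
The central tool is the permutation analogue of Talagrand's isoperimetric inequality,
$$\EE\bigl[\exp(d_T(\Pi_N, A)^2/16)\bigr] \;\le\; 1/\PP(A),$$
which I would establish by induction on $N$, conditioning on the value of $\Pi_N(N)$ (which splits $S_N$ into $N$ translates of $S_{N-1}$) and combining the induction hypothesis within each coset with the standard $L^p$-smoothing argument from Talagrand's original product-space proof.

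\textbf{Translating (i) and (ii) into a lower bound on $d_T$.} Let $m$ be the median of $h$ and set $A = \{\sigma : h(\sigma) \le m\}$, so $\PP(A) \ge 1/2$ and the isoperimetric inequality gives $\EE[\exp(d_T(\Pi_N, A)^2/16)] \le 2$. Fix $\pi$ with $h(\pi) = s \ge m$. By certifiability (ii), pick $I = I(\pi) \subseteq [N]$ with $|I| \le ds$ such that every $\tau$ agreeing with $\pi$ on $I$ satisfies $h(\tau) \ge s$. Choosing the unit-norm test weights $\alpha_i = \mathbf{1}[i \in I]/\sqrt{|I|}$ gives
$$d_T(\pi, A) \;\ge\; \inf_{\sigma \in A} \frac{|\{i \in I : \sigma(i) \ne \pi(i)\}|}{\sqrt{|I|}}.$$
For any $\sigma \in A$ the number of disagreements with $\pi$ on $I$ must be at least $(s-m)/(2c)$: otherwise one could correct $\sigma$ to a permutation $\tau$ with $\tau|_I = \pi|_I$ using at most $(s-m)/c$ transpositions (the factor of $2$ absorbing the cost of restoring permutation structure when displaced values must be swapped back), and Lipschitzness (i) would then force $h(\tau) < s$, contradicting the certifier. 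Consequently $d_T(\pi, A)^2 \ge (s-m)^2/(4c^2 d s)$.

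\textbf{Tail estimate and main obstacle.} Combining this lower bound with the isoperimetric inequality through Markov yields, for each $s \ge (1+\eps)m$, the bound $\PP(h(\Pi_N) \ge s) \le 2\exp\!\bigl(-(s-m)^2/(64 c^2 d s)\bigr)$, and evaluating at $s = (1+\eps)m$ produces the upper-tail bound of the stated form (after absorbing constants one lands on the exponent $\eps^2 m/(32 dc^2)$). The lower tail $\PP(h(\Pi_N) \le (1-\eps)m)$ is handled symmetrically by taking $A = \{h \ge m\}$; on that side of the median only the Lipschitz condition (i) is needed, since the event $\{h(\sigma)\ \text{small}\}$ can be certified directly by Hamming distance to any $\pi$ with $h(\pi) \ge m$. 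Adding the two tails accounts for the factor $4$ in front of the exponential.

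The main obstacle is the proof of the isoperimetric inequality itself: the group structure of $S_N$ couples coordinates --- altering $\pi(i)$ forces a compensating change at some other index --- which breaks the independence underpinning Talagrand's original product-space argument. The inductive step must therefore compare a conditional convex distance on each coset of $S_{N-1}$ with the global one on $S_N$, and the standard interpolation trick requires an extra convexity estimate to accommodate the non-product structure; this is the technical heart of the argument in \cite{LuczakMcDiarmid}.
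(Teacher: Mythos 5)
The paper does not actually prove this statement: it is quoted (in simplified form) from Luczak--McDiarmid, the underlying isoperimetric result being Talagrand's, so there is no in-paper argument to compare yours with. Your route --- the convex-distance inequality $\EE\exp\bigl(d_T(\Pi_N,A)^2/16\bigr)\le 1/\PP(A)$ on $S_N$ plus a certificate-based lower bound on $d_T$ --- is indeed the standard one, and your upper-tail computation is essentially the right reduction (with the small caveat that for $\pi$ with $h(\pi)=s'>s$ the certificate has size $ds'$, which is handled by the monotonicity of $(s'-m)/\sqrt{s'}$). Note, however, that the inequality $\EE\exp(d_T^2/16)\le1/\PP(A)$, which you only promise to establish by induction, is the entire technical content of the cited theorem; deferring it is legitimate, but then what you have is a reduction to Talagrand's symmetric-group inequality rather than a proof.

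Two concrete gaps remain. First, the lower tail. Taking $A=\{h\ge m\}$ and claiming that only condition (i) is needed does not work: in $d_T(\pi,A)$ the weight vector $\alpha$ depends on $\pi$ alone and must work against \emph{every} $\sigma\in A$ simultaneously, so you cannot exploit the certificates of the individual $\sigma\in A$; without a certificate the only available choice is uniform weights, which produces an exponent of order $t^2/(c^2N)$ rather than $t^2/(dc^2m)$. The correct argument keeps $A=\{h\le(1-\eps)m\}$ and applies condition (ii) to the permutations with $h\ge m$ (a set of probability at least $1/2$), via $\PP(A)\,\PP\bigl(d_T(\cdot,A)\ge u\bigr)\le e^{-u^2/16}$ with $u=\eps m/(c\sqrt{dm})$; so certifiability is needed on both tails. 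Second, the constants: with your factor-$2$ concession in the disagreement count you get $\PP\bigl(h\ge(1+\eps)m\bigr)\le 2\exp\bigl\{-\eps^2 m/(64(1+\eps)dc^2)\bigr\}$, which does not ``absorb'' into $\exp\{-\eps^2m/(32dc^2)\}$ even for $\eps\le1$. The factor $2$ is unnecessary: one transposition per disagreement suffices, since swapping the value at a wrong position $i\in I$ with the position currently holding $\pi(i)$ repairs coordinate $i$ and cannot spoil an already-correct coordinate of $I$; this gives at least $(s-m)/c$ disagreements, hence $\PP(h\ge(1+\eps)m)\le2\exp\{-\eps^2m/(16(1+\eps)dc^2)\}$ and, together with the corrected lower tail $2\exp\{-\eps^2m/(16dc^2)\}$, the stated bound $4\exp\{-\eps^2m/(32dc^2)\}$ for $\eps\le1$ (which is the regime in which the paper applies it).
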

Notice that we can apply this lemma to  random $r$-matchings, as they can be generated by random permutations (c.f.\ the permutational scheme at the beginning of this section).

%\begin{thm}
%	Let $P$ be an arbitrary collectable $r$-pattern. Then, a.a.s.~the random matching $\rm^{(r)}_{n}$ contains a $P$-clique of size  at least $\Omega_r(n^{1/r})$.
%\end{thm}

Finally, we are able to prove Theorem~\ref{thm:random}.

\begin{proof}[Proof of Theorem~\ref{thm:random}]
Let $P$ be a collectable $r$-pattern. In view of Lemma~\ref{lem:upbr}, it only remains to prove  that a.a.s.\ the random matching $\rm^{(r)}_{n}$ contains a $P$-clique of size at least $\Omega_r(n^{1/r})$.
	Let $t=n^{1-\frac{1}{r}}$. For simplicity,  assume that both, $t$ and $k:=n/t=n^{1/r}$ are integers.  We divide $[rn]$ into $rk$ consecutive blocks $B_1,\dots,B_{rk}$, each of length $t$. So,  $B_1=[t]$,  $B_2=\{t+1,\dots,2t\}$, etc.

Let $K$ be a (template) $P$-clique of size $k$ on vertex set $\{v_1,\dots,v_{rk}\}$ disjoint from $[rn]$ and with edges $e_1,\dots,e_k$.
	We may think of the sets $B_j$ as $t$-blow-ups of the vertices $v_j$ of $K$.
	For every $i=1,\dots,k$, let $I_i$ be the indicator random variable equal to 1 if there is an edge in $\rm^{(r)}_{n}$ spanning the sets $B_j$ for all $j$ such that $v_j\in e_i$ and 0 otherwise.
 Further, define $X = \sum_{i=1}^k I_i$.

 Observe that if $X=k$, we would find a copy of $K$ in $\rm^{(r)}_{n}$. More generally (and realistically), since every subset of edges of $K$ forms itself a $P$-clique,  there will be a $P$-clique of size $X$ in $\rm^{(r)}_{n}$ (see Fig.\ \ref{K}). To finish the proof, we are going to show that a.a.s.~$X=\Omega_r(k)$. %This will imply that a.a.s.~there is a sub-matching of~$P$ of size at least $\Omega_r(n^{1/r})$ in $\rm^{(r)}_{n}$.
	
\begin{figure}[ht]
\captionsetup[subfigure]{labelformat=empty}
\begin{center}

\scalebox{1}
{
\centering
\begin{tikzpicture}
[line width = .5pt,
vtx/.style={circle,draw,black,very thick,fill=black, line width = 1pt, inner sep=0pt},
]

    % RM
    \node[vtx] (1) at (0,0) {};
    \node[vtx] (2) at (1,0) {};
    \node[vtx] (3) at (2,0) {};
    \node[vtx] (4) at (3,0) {};
    \node[vtx] (5) at (4,0) {};
    \node[vtx] (6) at (5,0) {};
    \node[vtx] (7) at (6,0) {};
    \node[vtx] (8) at (7,0) {};
    \node[vtx] (9) at (8,0) {};
    \node[vtx] (10) at (8.8,0) {};

    \draw[line width=2.5mm, color=gray, line cap=round]  (1) -- (0.8,0) node[pos=0.5, below] {$B_1$};
    \draw[line width=2.5mm, color=gray, line cap=round]  (2) -- (1.8,0) node[pos=0.5, below] {$B_2$};
    \draw[line width=2.5mm, color=gray, line cap=round]  (3) -- (2.8,0) node[pos=0.5, below] {$B_3$};
    \draw[line width=2.5mm, color=gray, line cap=round]  (4) -- (3.8,0) node[pos=0.5, below] {$B_4$};
    \draw[line width=2.5mm, color=gray, line cap=round]  (5) -- (4.8,0) node[pos=0.5, below] {$B_5$};
    \draw[line width=2.5mm, color=gray, line cap=round]  (6) -- (5.8,0) node[pos=0.5, below] {$B_6$};
    \draw[line width=2.5mm, color=gray, line cap=round]  (7) -- (6.8,0) node[pos=0.5, below] {$B_7$};
    \draw[line width=2.5mm, color=gray, line cap=round]  (8) -- (7.8,0) node[pos=0.5, below] {$B_8$};
    \draw[line width=2.5mm, color=gray, line cap=round]  (9) -- (8.8,0) node[pos=0.5, below] {$B_9$};

    \draw[line width=0.3mm, color=lightgray]  (1) -- (10);
    \fill[fill=black, outer sep=-0.2mm]   (1) circle (0.1) node [below left] {$1$};
    \fill[fill=black, outer sep=-0.2mm]   (10) circle (0.1) node [below right] {$3n$};

    \node[vtx] (v1) at (0.75,0) {};
    \node[vtx] (v3) at (2.5,0) {};
    \node[vtx] (v4) at (3.25,0) {};
    \node[vtx] (v6) at (5.8,0) {};
    \node[vtx] (v7) at (6.4,0) {};
    \node[vtx] (v9) at (8.3,0) {};
    \coordinate (v47) at (4.825,1.25);
    \coordinate (v36) at (4.15,1);
    \coordinate (v69) at (7.05,1);

    \node[color=black] at (-1.5,0) {$\rm^{(3)}_{n}\!\!:$};

    \draw[line width=0.5mm, color=blue, outer sep=2mm] (v4) arc (0:180:1.25);
    \draw[line width=0.5mm, color=blue, outer sep=2mm] plot [smooth, tension=2] coordinates {(v4) (v47) (v7)};
    \draw[line width=0.5mm, color=blue, outer sep=2mm] plot [smooth, tension=2] coordinates {(v3) (v36) (v6)};
    \draw[line width=0.5mm, color=blue, outer sep=2mm] plot [smooth, tension=2] coordinates {(v6) (v69) (v9)};

    \fill[fill=black, outer sep=1mm]   (v1) circle (0.1);
    \fill[fill=black, outer sep=1mm]   (v3) circle (0.1);
    \fill[fill=black, outer sep=1mm]   (v4) circle (0.1);
    \fill[fill=black, outer sep=1mm]   (v6) circle (0.1);
    \fill[fill=black, outer sep=1mm]   (v7) circle (0.1);
    \fill[fill=black, outer sep=1mm]   (v9) circle (0.1);

   %K
    \node[vtx] (k1) at (0.5,2.5) {};
    \node[vtx] (k2) at (1.5,2.5) {};
    \node[vtx] (k3) at (2.5,2.5) {};
    \node[vtx] (k4) at (3.5,2.5) {};
    \node[vtx] (k5) at (4.5,2.5) {};
    \node[vtx] (k6) at (5.5,2.5) {};
    \node[vtx] (k7) at (6.5,2.5) {};
    \node[vtx] (k8) at (7.5,2.5) {};
    \node[vtx] (k9) at (8.5,2.5) {};
    \coordinate (k34) at (3,3.75) {};
    \coordinate (k67) at (6,3.75) {};
    \coordinate (k45) at (4,3.5) {};
    \coordinate (k78) at (7,3.5) {};

    \draw[line width=0.3mm, color=lightgray]  (k1) -- (k9);

    \draw[line width=0.5mm, color=blue, outer sep=2mm] (k4) arc (0:180:1.5);
    \draw[line width=0.5mm, color=blue, outer sep=2mm] (k7) arc (0:180:1.5);
    \draw[line width=0.5mm, color=blue, outer sep=2mm] plot [smooth, tension=2] coordinates {(k2) (k34) (k5)};
    \draw[line width=0.5mm, color=blue, outer sep=2mm] plot [smooth, tension=2] coordinates {(k5) (k67) (k8)};
    \draw[line width=0.5mm, color=blue, outer sep=2mm] plot [smooth, tension=2] coordinates {(k3) (k45) (k6)};
    \draw[line width=0.5mm, color=blue, outer sep=2mm] plot [smooth, tension=2] coordinates {(k6) (k78) (k9)};

   \fill[fill=black, outer sep=1mm]   (k1) circle (0.1) node [below] {$v_1$};
   \fill[fill=black, outer sep=1mm]   (k2) circle (0.1) node [below] {$v_2$};
   \fill[fill=black, outer sep=1mm]   (k3) circle (0.1) node [below] {$v_3$};
   \fill[fill=black, outer sep=1mm]   (k4) circle (0.1) node [below] {$v_4$};
   \fill[fill=black, outer sep=1mm]   (k5) circle (0.1) node [below] {$v_5$};
   \fill[fill=black, outer sep=1mm]   (k6) circle (0.1) node [below] {$v_6$};
   \fill[fill=black, outer sep=1mm]   (k7) circle (0.1) node [below] {$v_7$};
   \fill[fill=black, outer sep=1mm]   (k8) circle (0.1) node [below] {$v_8$};
   \fill[fill=black, outer sep=1mm]   (k9) circle (0.1) node [below] {$v_9$};

    \node[color=black] at (-1.15,2.5) {$K\!\!:$};

\end{tikzpicture}
}

\end{center}

\caption{A $P_9$-clique $K$ of size 3 and a copy of its sub-clique in $\rm^{(3)}_{n}$.}
\label{K}
			
\end{figure}
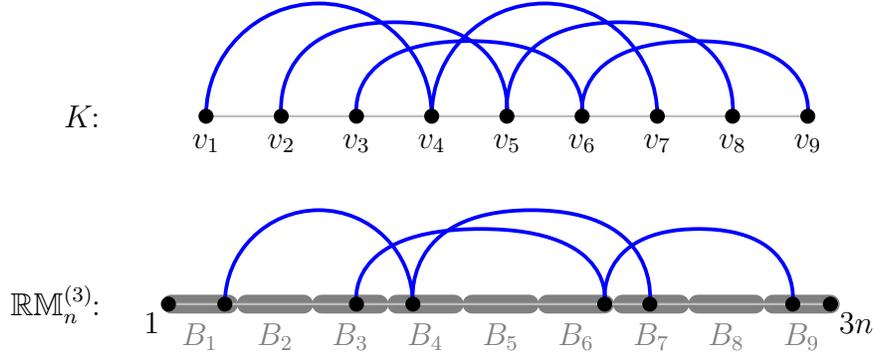

	To this end, observe that by Lemma~\ref{lemma:span_prob} applied separately for each $i$ to the sets $B_j$, $v_j\in e_i$,	
	\[
	\E X = \sum_{i=1}^k \PP(I_i=1)
	\ge \left(1-\exp\left\{-\frac{1}{(2r)^r} \cdot \frac{t^r}{n^{r-1}}\right\}\right)k = \left(1-\exp\left\{-\frac{1}{(2r)^r} \right\}\right)k=:c_rk.
	\]

	Now we just need to show a sharp concentration of $X$ around $\E X$. For this, recalling the permutation scheme of generating $\rm^{(r)}_{n}$,  we view $X$ as a function of $\Pi_{rn}$ and are going to apply Theorem~\ref{tala} with $h({\Pi}_{rn})=X$. We begin with checking its assumptions.

 Observe that swapping two elements of $\Pi_{rn}$ affects at most two edges of $\rm^{(r)}_{n}$, and so it changes   the values of  at most two indicators $I_i$, that is, it changes the value of $X = \sum_{i=1}^k I_i$ by at most 2. Moreover, to exhibit the event $X\ge s$, it is sufficient to reveal, for each $1\le i\le s$, one edge spanning the sets $B_j$, $v_j\in e_i$, which boils down to specifying just $rs$ values of $\Pi_{rn}$.
  Thus, we are in position to apply Theorem~\ref{tala} to $X$ with $N=rn$, $c=2$ and $d=r$.
  Let $m$ be the median of $X$. Then, by Theorem \ref{tala},
	\[
	\PP(|X-m|\ge m/2)\le4\exp(-m/(512r)).
	\]
	Moreover, there is a standard way to switch from the median to the expectation $\mu=\E X$. Indeed, we have (see for example \cite{Talagrand} or Lemma 4.6 in~\cite{McDiarmid1998})  that $|m -\mu|=O(\sqrt m)$. As $\mu\ge c_rk\to\infty$, it follows that $m\to\infty$ and, in particular,  $|m-\mu|\le 0.01\mu$. This implies that $\PP(|X-m|\ge m/2) = o(1)$ and
	\begin{align*}
		\PP(|X-\mu|\ge(3/4)\mu)
		&= \PP(|X-m + m-\mu|\ge(3/4)\mu)\\
		&\le \PP(|X-m| + |m-\mu|\ge(3/4)\mu)\\
		&\le \PP(|X-m| \ge(2/3)\mu)
		\le \PP(|X-m|\ge m/2) = o(1).
	\end{align*}
	Hence, a.a.s.~$X \ge (1/4)\mu\ge (c_r/4)k=(c_r/4)n^{1/r}$, finishing the proof.
\end{proof}

% With omega
%\begin{thm}
%	Let $P$ be an arbitrary collectable $r$-pattern. Let $\omega=\omega(n)$ be a function that tends to infinity arbitrarily slowly. Then, a.a.s.~the random matching $\rm^{(r)}_{n}$ contains a $P$-clique of size $n^{1/r}/\omega$.
%\end{thm}
%
%\begin{proof}
%	Let $k=\omega n^{1-\frac{1}{r}}$. For simplicity, let us assume that $m=(rn)/(r^2k)=\Omega(n^{1/r}/\omega)$ is an integer. We divide $[rn]$ into $rm$ blocks $B_i$, each of length $rk$, of consecutive numbers. So the first block is $B_1=\{1,\dots,rk\}$, the second $B_2=\{rk+1,\dots,2rk\}$, and so on.
%	
%	Let a collectable matching $M^{(r)}_m$ of size~$m$ be given. We can couple the edges of $M^{(r)}_m$ with a function $f:[m]\to \binom{[rm]}{r}$ such that $f(i)$ specifies the $i$-th edge in $M^{(r)}_m$.
%	
%	Let $X_i$ be the indicator random variable which is equal to one if there is no spanning edge between $B_j$'s in $\rm^{(r)}_{n}$, where $j\in f(i)$. Define $X = \sum_{i=1}^m X_i$. We show that a.a.s.~$X < m/2$. This will imply that a.a.s.~there is a submatching of~$M^{(r)}_m$ of size at least $m/2 = \Omega(n^{1/r}/\omega)$ in $\rm^{(r)}_{n}$.
%	
%	By Lemma~\ref{lemma:prob}, we get that
%	\[
%	\Pr(X_i=1) \le e^{-\frac{1}{2^r} \cdot \frac{k^r}{n^{r-1}}}
%	\]
%	and so
%	\[
%	\E X \le m e^{-\frac{1}{2^r} \cdot \frac{k^r}{n^{r-1}}}.
%	\]
%	Hence, Markov's inequality implies that
%	\[
%	\Pr(X \ge m/2) \le \frac{me^{-\frac{1}{2^r} \cdot \frac{k^r}{n^{r-1}}}}{m/2}
%	= 2 e^{-\frac{1}{2^r} \cdot \frac{k^r}{n^{r-1}}}
%	= 2 e^{-\frac{1}{2^r} \cdot \omega^r} \to 0,
%	\]
%	which completes the proof.
%\end{proof}
\begin{rem}
If we only aimed at showing the a.a.s.\ presence of $P$-cliques of size at least $n^{1/r}/\omega(n)$, where $\omega:=\omega(n)\to\infty$ arbitrarily slowly, then we would do without Talagrand's inequality. Indeed, setting $t=\omega n^{1-1/r}$ and $Y=k-X$, by Markov's inequality and Lemma~\ref{lemma:span_prob},
$$\Pr(X\le k/2)=\Pr(Y\ge k/2)\le2\E Y/k=2\Pr(I_1=0)\le\exp\left\{-\frac{\omega^r}{(2r)^r} \right\}=o(1).$$

\end{rem}

%\subsection{Lower bound -- lines}\label{lb-lines}
\begin{rem}
  Recall that the $r$-pattern $P_1=A^rB^r$ is called an alignment and  a $P_1$-clique -- a line. So, a line of size $k$ is just a matching represented by the word $A_1^rA_2^r\cdots A_k^r$ consisting of $k$ consecutive runs of $k$ different letters, each run of length $r$. It turns out that for lines the proof of the lower bound in Theorem~\ref{thm:random} is  more elementary than for other $P$-cliques (as it does not use Talagrand's inequality). We state and prove this special case of Theorem~\ref{thm:random} in Appendix~\ref{thm:lines}.
\end{rem}

\begin{rem}\label{avoidTal}
In fact, as outlined by a referee, there is a proof of the full version of Theorem~\ref{thm:random} which avoids  Talagrand's inequality. It is still based on the permutational scheme and, in addition, on Chernoff's bound for hypergeometric distribution and, ultimately, on Chebyshev's inequality. It is quite technical, though, and we decided  not to present it here (see Remark~\ref{B1} in Appendix B).
\end{rem}

\section{Final remarks}

\subsection{Optimality}\label{opt}
Here we discuss the issue of optimality of Theorem \ref{Theorem ESz General_classic}. The case $r=2$, that is, Theorem \ref{Theorem E-S for LSW}, is best possible, as demonstrated by an explicit construction in \cite{DGR-match}. However, already for $r=3$, we can show the optimality of Theorem \ref{Theorem ESz General_classic} only in special cases  when some of the parameters $a_i$ are set to 1 (cf.\  \cite[Remark 7]{DGR-match}).
 %The most general construction we can get for $r=3$ is when $M$ is clean, that is, no pair of edges forms pattern $P^*_1$, and some three of the parameters $a_2,a_3,a_4,a_7$ are set to 1.

To describe the constructions, we define a kind of ``blow-up'' operation on pairs of $r$-matchings. Given $r$-matchings $M$ and $N$, the latter of size $t$, the \emph{$N$-blow-up}  of $M$ is the matching $M[N]$ obtained from $M$ by replacing every vertex $i$ of $M$ by an ordered set $U_i$, $|U_i|=t$, and each edge $e=\{i_1,\dots,i_r\}\in M$ by a copy $N_e$ of $N$ on $\bigcup_{j=1}^rU_{i_j}$ (see Fig. \ref{NM}). Note that $|M[N]|=|M||N|$.

\begin{figure}[ht]
\captionsetup[subfigure]{labelformat=empty}
\begin{center}

$$M=ABACCBABC\qquad\mbox{and}\qquad N=XYZ\;YZX\;ZXY$$

$$M[N]=\underbrace{DEF}_A\;\underbrace{GHI}_B\;\underbrace{EFD}_A\;\underbrace{JKL}_C\;\underbrace{KLJ}_C\;\underbrace{IHG}_B\;\underbrace{FDE}_A\;
\underbrace{IGH}_B\;\underbrace{LJK}_C$$

\end{center}

\caption{An example of an $N$-blow-up of $M$.}
\label{NM}
		
\end{figure}

In $M[N]$ there are two kinds of pairs of edges: 1) both within the same copy of $N$ (like $e_D$ and $e_F$ in Fig. \ref{NM}), call them \emph{$N$-pairs}, and 2) each from a different copy of $N$ (like $e_D$ and $e_G$ in Fig. \ref{NM}), call them \emph{$M$-pairs}. Obviously, every $N$-pair forms the same pattern as the corresponding pair of edges in $N$ (e.g., edges $e_D$ and $e_F$ in Fig. \ref{NM} form in $M[N]$ the same pattern as edges $e_X$ and $e_Z$ in $N$). Let us call this property \emph{$N$-inheritance}.

But this operation is particularly useful
when,
in addition to $N$-inheritance,  every $M$-pair made of one edge from $N_e$ and one from $N_f$, where $e,f\in M$, forms the same pattern as the  pair $\{e,f\}$ does in $M$ (e.g., edges $e_D$ and $e_G$ in Fig. \ref{NM} form in $M[N]$ the same pattern as edges $e_A$ and $e_B$ in $M$). Let us call this property \emph{$M$-inheritance}. 

For a matching $M$, let $\mathcal P(M)$ be the set of all patterns appearing among the pairs of edges of $M$. For instance, in Fig. \ref{NM}, we have $\mathcal P(M)=\{P_3,P_5,P_9\}$ (cf. Table~\ref{table:relations}). A crucial observation is that if $M[N]$ is $M$-inheritable and $\mathcal P(M)\cap\mathcal P(N)=\emptyset$, then the size of a largest $P$-clique in $M[N]$ is, for every $P\in\mathcal P(M)$, equal to the size of a largest $P$-clique in $M$ and the same is true for every $P\in\mathcal P(N)$.

In particular, $M[N]$ is $M$-inheritable if $N$ is $r$-partite. Consequently, if $M$ is  $r$-partite too, then so is $M[N]$. Also, note that if $M$ is a line, that is, an $A^rB^r$-clique of size $\ell$, then $M[N]$ is an ordered concatenation of $\ell$  copies of $N$, and so it is trivially $M$-inheritable regardless of the structure of $N$.

 Now, we are ready to describe our constructions, beginning with $r=2$ and $r=3$. Recall that for $r=2$, both $R_2=ABBA$ and $R_3=ABAB$ are bipartite, and so every $R_2$-clique and $R_3$-clique, is bipartite, while for $r=3$, only patterns $P_5,P_6,P_8,P_9$ are tripartite, and so are $P_i$-cliques for $i=5,6,8,9$.

 For $r=2$ the optimal construction  given in \cite{DGR-match}  is just $L_\ell[S_s[W_w]]$, that is, a concatenation of $\ell$ copies of the $W_w$-blow-up of $S_s$, where $S_s$ is a stack, or $R_2$-clique, of size $s$, and $W_w$ is a wave, or $R_3$-clique, of size $w$. Indeed, by the observations we have just made, the largest line in $L_\ell[S_s[W_w]]$ has size $\ell$, the largest stack - size $s$, and the largest wave --size $w$, while the size of $L_\ell[S_s[W_w]]$ is precisely $\ell s w$.
   By swapping the two inner arguments in the blow-up operation, one can obtain another optimal construction,  $L_\ell[W_w[S_s]]$. Also,  legitimate constructions are $(L_\ell[S_s])[W_w]]$ and $(L_\ell[W_w])[S_s]$ where we begin by blowing up the line.

 For $r=3$, let $K_i$ be a $P_i$-clique of size $a_i$, $i=2,5,6,8,9$. Consider first the 3-fold blow-up operation $K_5[K_6[K_8[K_9]]]$. It has size $a_5a_6a_8a_9$ and, by the above observations the largest size of a $P_i$-clique, $i=5,6,8,9$, is $a_i$. This shows that both, Corollary~\ref{Theorem ESz General_partite} and Corollary~\ref{Theorem ESz General_diagonal}(c) are optimal for $r=3$. In fact, this is true for every $r\ge2$, as there is a straightforward generalization of the above construction where one superimposes $2^{r-1}-1$ blow-up operations (involving all $2^{r-1}$ $r$-partite patterns in any order whatsoever.)

  To get beyond the 3-partite patterns, consider the 5-fold blow-up construction \newline $K_1[K_2[K_5[K_6[K_8[K_9]]]]]$. Note that it has $a_1a_2a_5a_6a_8a_9$ edges. Note also that $K_5[K_6[K_8[K_9]]]$ is still 3-partite, so  $K_2[K_5[K_6[K_8[K_9]]]]$ is $K_2$-inheritable, while the final construction is $K_1$-inheritable due to the robustness of lines. It follows that every pair of edges  forms one of the six patterns and the largest $P_i$-clique  is still of size $a_i$, $i=1,2,5,6,8,9$.
  %The final construction is obtained by taking $a_1$ copies of $M$ ordered linearly. It has $n=a_1a_2a_5a_6a_8a_9$ edges and, in addition to the above, the largest $a_1$-clique (a line) therein is of size $a_1$.
This shows that Theorem \ref{Theorem ESz General_classic}(b) is optimal if,  $a_3=a_4=a_7=1$, and more generally, if some three of parameters $a_2,a_3,a_4,a_7$ are set to 1.

For larger $r$, in  similar constructions, the sets of non-marginal values of parameters $a_i$ are getting proportionally smaller.
Indeed, there are exactly $2^{r-1}$ $r$-partite $r$-patterns, so all but $2^{r-1}+2$ parameters $a_i$ must be set to~1. This implies that there is an $r$-matching of size $n$ with a largest $P$-clique (for any $P$) of size $O(n^{1/(2^{r-1}+2)})$. For $r=3$, this is $O(n^{1/6})$, while the lower bound in Corollary~\ref{Theorem ESz General_diagonal}(a) is  $\Omega(n^{1/9})$.

\begin{rem}\label{refinement} A referee pointed out that for both $M$ and $N$ clean, a sufficient condition for $M$-inheritance of the blow-up $M[N]$ is that every
pattern in $N$  yields an ordered partition of $r$ (as defined in the proof of Corollary~\ref{number_coll}) which refines every ordered partition associated with a pattern in $M$. (So, $r$-partite patterns with partition $r=1+\cdots+1$ and lines with partition $r=r$ are the two extremes.) This allows to expand the blow-ups and get better constructions. 

For $r=3$, one could add to the string of blow-ups $K_3$, as both $P_2$ and $P_3$ yield partition $3=2+1$, obtaining a clean matching of size $a_1a_2a_3a_5a_6a_8a_9$ with the largest $P_i$-clique of size $a_i$ (assuming $a_4=a_7=1$).
Alternatively,  one could replace  the pair $P_2,P_3$ with $P_4,P_7$ (both latter patterns have partition $3=1+2$). Either way, in the symmetric case this yields an upper bound of $O(n^{1/7})$.   

For general $r$, as each  partition of $r$ with $i$ terms corresponds to $2^{i-1}$ patterns, one can fix a maximum chain of refining partitions and obtain an iterated blow-up of $\sum_{i=1}^r2^{i-1}=2^r-1$ $P$-cliques. For instance, for $r=4$, one could take the chain of partitions $4,\;3+1,\;2+1+1,\;1+1+1+1$ and the corresponding 14-fold blow-up (cf. Table~\ref{table:patterns4})
$$K_1[K_2[K_3[K_5[K_6[K_8[K_9[K_{14}[K_{15}[K_{17}[K_{18}[K_{23}[K_{24}[K_{26}[K_{27}]]]]]]]]]]]]]]]$$
where $K_i$ is a $P_i$-clique of size $a_i$. (Note that the rightmost eight cliques correspond to the eight 4-partite patterns, all with partition $1+1+1+1$.)
\end{rem}

\medskip

There are other than blow-ups constructions leading to optimal cases.
 One such case is when $r=3$, $a_1=n/2$, $a_3=2$ (or $a_7=2$),  and all other seven parameters $a_i$ are set to~1. It is realized  by a simple chain-like construction.
For an $r$-pattern $P$, define a \emph{$P$-chain} as a matching in which consecutive  edges (in the order of the left ends) form pattern $P$, while all other pairs of edges form the alignment $P_1=A^rB^r$. For $r=3$, one can easily construct a $P_i$-chain for $i=3,7$, which shows  optimality of Theorem \ref{Theorem ESz General_classic}(b) in the above mentioned case.
For example,  the $P_3$-chain looks like this:
\[
A_1A_1A_2A_2A_1A_3A_3A_2A_4A_4A_3A_5A_5A_4A_6A_6A_5A_7A_7\dots A_{n-2}A_nA_nA_{n-1}A_n.
\]

For general $r$, the same construction works for all patterns $P$ in which the first $A$-run is at least as long as the number of $B$'s preceding the last $A$, which equals $r$ minus the length of the $B$-run at the end of $P$. Among collectable patterns, for $r=3$ this property is satisfied only by $P_3$ and $P_7$, while for $r=4$, only by $P_3$, $P_7$, and $P_{19}$ (cf.\ Table \ref{table:patterns4}).

But $P$-chains with non-collectable $P$ can also be useful in showing optimality of Theorem~\ref{Theorem ESz General_classic}(a) for some very special cases.
Indeed, for $r=3$, consider the $P_1^*$-chain
\[
A_1A_1A_2\;A_1A_2A_3\;A_2A_3A_4\;A_3A_4A_5\;... \;A_{n-3}A_{n-2}A_{n-1}\;A_{n-2}A_{n-1}A_n\; A_{n-1}A_nA_n
\]
(the spaces between 3-letter blocks where introduced solely for the benefit of the reader) and observe that the consecutive edges form pattern $P^*_1$, while all other pairs of edges form pattern $P_1$, that is, they form a $P_1$-clique of size $\lceil n/2\rceil$.  This shows that in the special case when $a_1=n$ is even (with  $a_i=1$ for all other $i$), the factor of $1/2$ appearing in the assertion of Theorem \ref{Theorem ESz General_classic}(a)   cannot be improved.

\medskip

On the negative side, let us point to instances which are far from optimality. Consider the case when $r=3$ and all $a_i=1$, except $i=2$ and $i=4$. Let every pair of edges of a 3-matching $M\in\cM_n^{(3)}$ form either pattern $P_2=AABBBA$ or $P_4=ABBBAA$. Let the edges of $M$ be (in the order of their left ends) $e_1,\dots,e_n$ with the corresponding letters $A_1,\dots,A_n$. W.l.o.g., let $e_1$ and $e_2$ form pattern $P_2$, i.e., there is a subsequence $A_1A_1A_2A_2A_2A_1$. Then also $e_1$ and $e_j$, $j=3,\dots,n$, must form pattern $P_2$, because no matter what the relation between $e_2$ and $e_j$ is, $e_j$ is totally ``inside'' $e_2$. For the same token, if $e_1$ and $e_2$ form pattern $P_4$, then so do $e_1$ and $e_j$, $j=3,\dots,n$. And the same is true for every $e_h$, $h\ge1$, that is, for every $j\ge h$, $e_h$ and $e_j$ form the same pattern, always $P_2$ or always $P_4$. This partitions the edges of $M\setminus\{e_n\}$ into two subsets, $M_2$ and $M_4$ such that $M_2\cup\{e_n\}$ is a $P_2$-clique and $M_4\cup\{e_n\}$ is a $P_4$-clique. It follows that for every partition $n-1=n_2+n_4-3$, there is a $P_2$-clique of size $n_2$ or a $P_4$-clique of size  $n_4$, a much stronger statement than what is implied by Theorem \ref{Theorem ESz General_classic}. And the same is true for pairs $(P_2,P_7)$ and $(P_3,P_4)$.

Even more surprising is the combination of $P_3$ and $P_7$ ($r=3$). One can easily check that when the edges of $M\in\cM_n^{(3)}$ form only patterns $P_3$ and $P_7$, then, in fact, $M$ must be either entirely a $P_3$-clique or entirely a $P_7$-clique. Indeed, if edges $e_A$ and $e_B$ form  pattern $P_7=ABAABB$ and $e_B$ and $e_C$ form pattern $P_3=BBCCBC$, then $e_A$ and $e_C$ are forced to form pattern $P_1$, a contradiction. Hence, we have $BCBBCC$ and, in turn, $e_A$ and $e_C$ also form $P_7$. If, on the other hand,  we have $AABBAB$ and $BCBBCC$, then $AACACC=P_1^*$, a contradiction again. So, again, all three pairs of edges among $e_A,e_B,e_C$ form pattern $P_3$. This is the most striking case against the optimality of Theorem \ref{Theorem ESz General_classic}.

\subsection{Improvement for $r=3$}\label{impr} We have recently learned a cute argument which,  for $r=3$, improves Corollary~\ref{Theorem ESz General_diagonal}(a) from $\Omega(n^{1/9})$ to $\Omega(n^{1/7})$, matching the upper bound $O(n^{1/7})$ from Remark~\ref{refinement}. For a set of 3-patterns $\cP$, a \emph{$\cP$-clique} is a matching whose all pairs of edges form a pattern from $\cP$. Let $M$ be a 3-matching of size $n\ge a_1(a_2+a_4)a_6a_8(a_3+a_7)a_5a_9+1$. The claim is that for some $i\in[9]$, $M$ contains a $P_i$-clique of size $a_i+1$.
The starting point is, unlike in our inductive proof, to drop the middle vertex of each edge of $M$ and apply to the 2-matching $M'$ obtained this way Theorem~\ref{Theorem E-S for LSW} with $\ell=a_1$, $s=(a_2+a_4)a_6a_8$, and $w=(a_3+a_7)a_5a_9$. It is easy to check that when two edges of $M'$ form a nesting, then the corresponding edges of $M$ form one of the patterns in $\cP=\{P_2,P_4,P_6,P_8\}$, while if they form a crossing, then the corresponding edges of $M$ form one of the patterns in $\cP'=\{P_1^*,P_3,P_5,P_7,P_9\}$. We thus obtain in $M$ either a line of size $a_1+1$ or a $\cP$-clique $H$ of size $h:=(a_2+a_4)a_6a_8+1$, or a $\cP'$-clique $H'$ of size $h':=(a_3+a_7)a_5a_9+1$. Now our goal is to somehow separate patterns $P_2$ and $P_4$ in~$H$, and patterns $P_3$ and $P_7$ in $H'$.

To this end, let us say that a vertex $v$ in a 3-matching is \emph{covered} by an edge $e$ of that matching if $v$ is between the first and the last vertex of $e$ ($v$ does not need to be the middle vertex of $e$). Let $e_1,\dots, e_h$ be the edges of $H$ ordered by their left ends. Notice, and this is absolutely crucial, that the set of edges of $H$ which cover a fixed vertex $v$ of $H$ form a prefix of the sequence $e_1,\dots, e_h$ (possibly empty). This is because each pattern in $\cP$ begins and ends with the same letter. So, if an edge $e_j$ covers a vertex, then so does each $e_i$ for $i\le j$.

If $v$ itself is a midpoint of an edge $e_j$ and the edges $e_g,\dots, e_h$ do \emph{not} cover $v$, then $v$ is either entirely to the left of all these edges or entirely to the right.  This brings about a classification of all edges of $H$ into two categories: the \emph{left edges} -- those whose midpoints are to the left of all edges not covering it,  and the \emph{right edges}. (If all edges of $H$ cover $v$, then $e_j$ can be classified either way.)
Here is an example (note that the extreme points of edges form mutually a nesting):
$$A\;A\;B\;C\;D\;C\;D\;E\;F\;F\;B\;F\;E\;E\;D\;C\;B\;A.$$
The midpoint of $e_C$ is covered by edges $e_A,e_B,e_C,e_D$ and is to the left of edges $e_E,e_F$, while the midpoint of $e_E$ is not covered only by $e_F$ and lies to the right of $e_F$. In this example, edges $e_A,e_C,e_D$ are left, $e_E$ is right, while $e_B,e_F$ can be both (their midpoints are covered by all six edges).

 Another crucial observation is that no left edges form pattern $P_4$, while no right edges form pattern $P_2$. Indeed, if $e_i=XXX$ and $e_j=YYY$, $i<j$, form pattern $P_4=XYYYXX$, then the midpoint of $e_i$ is not covered by $e_j$, while it is to the right of $e_j$. Similarly, if $e_i$ and $e_j$ form $P_2=XXYYYX$, then the midpoint of $e_i$ (the second $X$) is to the left of $e_j$ (and $e_j$ does not cover it).

  So, if  $h\ge a_2a_6a_8+1$, then, by Theorem~\ref{Theorem ESz General_classic}(b) applied to the set of left edges of $H$, for some $i\in\{2,6,8\}$ there is a $P_i$-clique in $H$, and thus in $M$, of size $a_i+1$. On the other hand, if  $h\ge a_4a_6a_8+1$, then,  Theorem~\ref{Theorem ESz General_classic}(b) applied to the set of right edges of $H$, for some $i\in\{4,6,8\}$ there is a $P_i$-clique in $H$ of size $a_i+1$. And one of the two statements must hold, since $h:=(a_2+a_4)a_6a_8+1$.

The situation is similar, though a bit more complicated, for $H'$.  Here the edges covering a fixed vertex may form a prefix or a suffix of the sequence of edges of $H'$ ordered by their left ends. This also yields a binary classification of the edges of $H'$ with a similar mutual exclusion of patterns $P_3$ and $P_7$, and of pattern $P_1^*$ whatsoever. We leave the details for the reader. In conclusion, if $n\ge a_1(a_2+a_4)a_6a_8(a_3+a_7)a_5a_9+1$, then for some $i\in[9]$, $M$ contains a $P_i$-clique of size $a_i+1$, which, setting all $a_i$ equal, yields the lower bound $\Omega(n^{1/7})$.

But in general, this statement
is still far from optimal. If all $a_i=1$ except for $a_2$ and $a_7$, then, according to the discussion in the previous subsection, the correct bound should be
$\Theta(a_2 + a_7)$, not $\Theta(a_2a_7)$ given above. A similar situation holds for $a_3$ and $a_4$.

\subsection{Open questions} Let us conclude the paper with some questions concerning possible future research. Firstly, in view of the discussion in the previous two subsections, one could try to strengthen Theorem \ref{Theorem ESz General_classic} so that the new version would be indeed optimal.
\begin{prob}
 Given $r\ge3$, let $P_1,\dots, P_{3^{r-1}}$ be all collectable ordered $r$-matchings. Find a function $f(a_1,\dots,a_{3^{r-1}})$ such that every $r$-matching of size $n\ge f(a_1,\dots,a_{3^{r-1}})+1$ contains for some $i$ a $P_i$-clique of size $a_i+1$ and, on the other hand, there exists an $r$-matching with $f(a_1,\dots,a_{3^{r-1}})$ edges not containing a $P_i$-clique of size $a_i+1$ for any $i$.
 \end{prob}

Another direction in which one could sail inspired by our results is a \emph{cyclic} counterpart of ordered matchings.
 A cyclic version of the original Erd\H{o}s-Szekeres theorem for permutations was obtained recently by Czabarka and Wang \cite{Czabarka}.
 A \emph{cyclic $r$-matching} of size $n$ is just an $r$-uniform matching with $n$ edges on a cyclically ordered set of vertices. Such matchings can be naturally represented by cyclic words in which every letter corresponding to an edge occurs exactly $r$ times. 
 For instance, for $r=2$ we have only two possible cyclic patterns, namely $Q_1=AABB$ and $Q_2=ABAB$ (since $ABBA$ is equivalent to $AABB$), while for $r=3$ there are just four of them: $AAABBB$, $AABABB$, $AABBAB$, and $ABABAB$.
 
It would be desirable to know whether in this setting a similar Erd\H os-Szekeres type phenomena occur. First, however, we should decide what is the appropriate analog of a $P$-clique. Take a look, for instance, at two cyclic matchings of size three, $AABBCC$ and $AABCCB$. They both form a $Q_1$-clique in a sense that each pair of edges form a cyclic pattern $Q_1$, but they are not isomorphic. Perhaps, to remedy this problem,  the right  definition of a homogenous cyclic $r$-matching of size $n$ should demand that, for each $k\le n$, all sub-matchings of size $k$ are mutually order-isomorphic.

\begin{prob} 
For each $r\ge 2$ define homogenous cyclic matchings ``in a right way''. Then
identify and/or characterize all possible homogenous cyclic ordered $r$-matchings. Subsequently, prove an Erd\H os-Szekeres type theorem for them.
	\end{prob}

\medskip

Our next  question refers to the random setting  and aims to pinpoint the multiplicative constants in Theorem~\ref{thm:random}.

\begin{conj}
	For every $r\geqslant 2$ and every collectable $r$-pattern $P$, there exists a positive constant $b_P$ such that the maximum size of a $P$-clique in $\rm^{(r)}_n$ is a.a.s.\ equal to $(1+o(1))b_Pn^{1/r}$.
\end{conj}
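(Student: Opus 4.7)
My plan is to split the conjecture into two parts: (i) concentration of $L_P(n)$, the size of the largest $P$-clique in $\rm^{(r)}_{n}$, around its expectation, and (ii) convergence of $\E[L_P(n)]/n^{1/r}$ to some positive $b_P$. Part (i) follows from Talagrand's inequality essentially verbatim from the proof of Theorem~\ref{thm:random}. Viewing $L_P$ as a function on permutations of $[rn]$ via the permutational scheme, a single transposition changes at most two edges of the matching and thus changes $L_P$ by at most $c=2$; exhibiting a $P$-clique of size $s$ requires specifying only $rs$ values of the underlying permutation, so Theorem~\ref{tala} applies with $c=2$ and $d=r$. Coupled with the standard bound $|\E[L_P(n)] - m_n| = O(\sqrt{m_n})$ for the median $m_n$ and the lower bound $m_n = \Omega(n^{1/r})$ supplied by Theorem~\ref{thm:random}, this yields $L_P(n) = (1+o(1))\E[L_P(n)]$ a.a.s.\ Hence it suffices to establish convergence of $\E[L_P(n)]/n^{1/r}$.

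For (ii), the natural tool is Fekete's lemma, ideally applied after Poissonization. Consider a Poisson point process $\Pi_\lambda$ on the ordered simplex $\Delta_r := \{0 < x_1 < \cdots < x_r < 1\}$ of intensity $\lambda$, and denote by $\widetilde{L}_P(\lambda)$ the size of its largest $P$-clique. A standard de-Poissonization argument should show that $\E[\widetilde{L}_P(r!\,n)]$ and $\E[L_P(n)]$ agree to leading order. For the alignment $P_1 = A^rB^r$ superadditivity is immediate: a line on $[0,s]$ concatenated with a line on $[s,s+t]$ is still a line, so by independence of a Poisson process over disjoint intervals, $\E[\widetilde{L}_{P_1}(\lambda+\mu)] \geq \E[\widetilde{L}_{P_1}(\lambda)] + \E[\widetilde{L}_{P_1}(\mu)]$. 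Combined with the $\Theta(\lambda^{1/r})$ upper bound inherited from Lemma~\ref{lem:upbr}, Fekete's lemma yields the existence of $b_{P_1} > 0$.

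The main obstacle is to extend (ii) to every collectable $P \neq P_1$. Here naive superadditivity fails: two $P$-cliques on disjoint intervals always produce alignments for their cross-pairs, not copies of $P$. A potentially useful device is to exploit the splittable decomposition $P = X_1 X_2 \cdots X_j$ and require each edge of a $P$-clique to place its prescribed letters into prescribed sub-intervals of $[0,1]$; superadditivity then holds within this restricted model, but relating it back to the unrestricted problem and obtaining a matching lower bound on $\E[\widetilde{L}_P(\lambda)]$ is nontrivial. For $r$-partite patterns one could attempt a reduction to the longest chain problem in a random $(r-1)$-dimensional point process in the spirit of Bollob\'as--Winkler, where the limiting constant is known to exist though only implicitly. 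A unified proof across all $3^{r-1}$ collectable patterns will, in our view, require a genuinely new subadditive ergodic argument attuned to the combinatorial structure of collectable patterns; producing one is the central difficulty.
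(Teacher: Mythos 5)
You should first note that the statement you are proving is not proved in the paper at all: it is stated there as an open conjecture (accompanied by the problem of determining the constants $b_P$), and the only supporting results are Theorem~\ref{thm:random}, which gives $\Theta_r(n^{1/r})$ without a limiting constant, and the case $r=2$, which the authors attribute to external results of Baik--Rains (via Stanley) for stacks and waves and of Justicz--Scheinerman--Winkler for lines. So there is no paper proof to compare against, and the question is whether your argument closes the gap. It does not, and you say so yourself: after the concentration step, everything reduces to showing that $\E L_P(n)/n^{1/r}$ converges to a positive constant for every collectable $P$, and your proposal establishes this for no pattern --- the non-alignment case is explicitly left as ``the central difficulty,'' which is precisely the content of the conjecture. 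The concentration step itself is fine (Theorem~\ref{tala} applies to the largest-$P$-clique functional with $c=2$, $d=r$, and the median is $\Omega(n^{1/r})$ by Theorem~\ref{thm:random}), but it only converts the conjecture into the statement you still need, it does not supply it.

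Moreover, even your sketch for the alignment $P_1=A^rB^r$ has a gap as written. In the Poissonized model the edges are random $r$-subsets of a fixed interval, so ``a line on $[0,s]$ concatenated with a line on $[s,s+t]$'' does not translate into superadditivity in the intensity: restricting to edges lying entirely in a subinterval of relative length $c$ thins the intensity by the factor $c^{\,r}$, so the natural inequality is of the form $\E\widetilde L_{P_1}(2^{r}\lambda)\ge 2\,\E\widetilde L_{P_1}(\lambda)$ rather than $\E\widetilde L_{P_1}(\lambda+\mu)\ge \E\widetilde L_{P_1}(\lambda)+\E\widetilde L_{P_1}(\mu)$. Such a two-scale inequality, together with the $O(\lambda^{1/r})$ upper bound from Lemma~\ref{lem:upbr}, gives monotonicity and hence convergence of $\E\widetilde L_{P_1}(\lambda)/\lambda^{1/r}$ only along geometric subsequences; upgrading this to a genuine limit, and the de-Poissonization transfer back to $\E L_{P_1}(n)$, both require additional regularity arguments that you assert but do not carry out. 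So the proposal is a reasonable research plan (concentration plus a subadditive-ergodic scheme is indeed the standard route for such statements), but it is not a proof of the conjecture for any $r\ge 3$, nor even a complete proof for lines.
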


\begin{prob}
Assuming the conjecture is true, determine all constants $b_P$.
\end{prob}

The conjecture is true (and the problem is solved) for $r=2$. Indeed,   Stanley \cite{Stanley}  deduced  from a deep result of Baik and Rains \cite{BaikRains} concerning monotone subsequences in random permutations that the maximum size of stacks and waves in $\rm^{(2)}_n$ is a.a.s  $(1+o(1))\sqrt{2n}$.  In turn,  Justicz, Scheinerman, and Winkler in \cite{JSW} showed, while studying a related notion of random interval graphs, that
the size of the largest line contained in $\rm_n$ is a.a.s.\ equal to $(2+o(1))\sqrt{n/\pi}$.

\medskip

Let us conclude with  asking for the size of the largest twins in ordered $r$-matchings, that is,  pairs of order-isomorphic, disjoint sub-matchings. Let $t^{(r)}(M)$  denote the maximum size of twins in an $r$-matching $M$ (measured by the size of just one of them) and $t^{(r)}(n)$ -- the minimum of $t^{(r)}(M)$ over all $r$-matchings on $[rn]$. In \cite[Theorem 20]{DGR-match} we proved that a.a.s.\ $t^{(2)}(\rm^{(2)}_n)=\Theta(n^{2/3})$ and, by linking the presence of twins in 2-matchings with the presence of twins in permutations, we  derived an estimate  $t^{(2)}(n)=\Omega(n^{3/5})$. We conjectured in \cite[Conjecture 23]{DGR-match}  (in the more general setting of multiple twins) that $t^{(2)}(n)=\Omega(n^{2/3})$.

In the $r$-uniform scenario, we proved in \cite[Theorem 1]{DGR_ZG}  that a.a.s.\   $t^{(r)}(\rm^{(r)}_n)=\Theta_r(n^{2/(r+1)})$.
However, its deterministic, much harder counterpart remains wide open.

\begin{prob}
Determine the order of magnitude of $t^{(r)}(n)$.
\end{prob}
Currently, we only know that $t^{(r)}(n)=\Omega_r\left(n^{\frac3{5(2^{r-1}-1) }}\right)$ (see \cite[Theorem 2]{DGR_ZG}) and, as a consequence of the above average case result, that $t^{(r)}(n)=O_r\left(n^{2/(r+1)}  \right)$.
Even a similar question for permutations is far from being settled (see, e.g.,~\cite{BukhR}).

\begin{rem} Several months after posting our paper on arXiv, two new papers have emerged, namely \cite{AJKS} and \cite{SZ}, bringing some further developments and improvements of our results.
\end{rem}

\subsection*{Acknowledgments} We are extremely grateful to both referees for their extended and deep comments which have led to a gross improvement of the presentation of our results. In particular, we owe the short proof of Proposition~\ref{Proposition P-family-clique} as well as the argument presented in Subsection~\ref{impr} and Remark~\ref{refinement} to Referee 1 (5-page report), while the alternative proof of Lemma~\ref{lemma:span_prob} and the contents of Remarks~\ref{avoidTal} and~\ref{B1} were suggested by Referee 2 (3-page report).

%\bibliographystyle{amsplain}
%\bibliography{refs_jctb}

% \bib, bibdiv, biblist are defined by the amsrefs package.
\begin{bibdiv}
\begin{biblist}

\bib{AJKS}{unpublished}{
      author={Anastos, Michael},
      author={Jin, Zhihan},
      author={Kwan, Matthew},
      author={Sudakov, Benny},
       title={Extremal, enumerative and probabilistic results on ordered
  hypergraph matchings},
  note={\href{https://doi.org/10.48550/arXiv.2308.12268}{https://doi.org/10.48550/arXiv.2308.12268}},
}

\bib{BaikRains}{article}{
      author={Baik, Jinho},
      author={Rains, Eric~M.},
       title={The asymptotics of monotone subsequences of involutions},
        date={2001},
        ISSN={0012-7094},
     journal={Duke Math. J.},
      volume={109},
      number={2},
       pages={205\ndash 281},
         url={https://doi.org/10.1215/S0012-7094-01-10921-6},
      review={\MR{1845180}},
}

\bib{BKP}{article}{
      author={B\'ar\'any, Imre},
      author={Kalai, Gil},
      author={P\'or, Attila},
       title={Erd{\H{o}}s-{S}zekeres theorem for $k$-flats},
        date={2023},
        ISSN={0179-5376},
     journal={Discrete Comput. Geom.},
      volume={69},
      number={4},
       pages={1232\ndash 1240},
         url={https://doi.org/10.1007/s00454-022-00450-4},
      review={\MR{4586151}},
}

\bib{BGT}{article}{
      author={Bar\'{a}t, J\'{a}nos},
      author={Gy\'{a}rf\'{a}s, Andr\'{a}s},
      author={T\'{o}th, G\'{e}za},
       title={Monochromatic spanning trees and matchings in ordered complete
  graphs},
        date={2024},
        ISSN={0364-9024},
     journal={J. Graph Theory},
      volume={105},
      number={4},
       pages={523\ndash 541},
         url={https://doi.org/10.1002/jgt.23058},
      review={\MR{4709723}},
}

\bib{BucicSudakovTran}{article}{
      author={Buci\'{c}, M.},
      author={Sudakov, B.},
      author={Tran, T.},
       title={{E}rd{\H o}s--{S}zekeres theorem for multidimensional arrays},
        date={2022},
     journal={J. Eur. Math. Soc.},
}

\bib{BukhMatousek}{article}{
      author={Bukh, B.},
      author={Matou\v{s}ek, J.},
       title={{E}rd{\H o}s--{S}zekeres--type statements: {R}amsey function and
  decidability in dimension 1},
        date={2014},
     journal={Duke Math. J.},
      volume={163},
       pages={2243\ndash 2270},
}

\bib{BukhR}{article}{
      author={Bukh, Boris},
      author={Rudenko, Oleksandr},
       title={Order-isomorphic twins in permutations},
        date={2020},
        ISSN={0895-4801},
     journal={SIAM J. Discrete Math.},
      volume={34},
      number={3},
       pages={1620\ndash 1622},
      review={\MR{4127099}},
}

\bib{ConlonFerber}{article}{
      author={Conlon, David},
      author={Ferber, Asaf},
       title={Lower bounds for multicolor {R}amsey numbers},
        date={2021},
        ISSN={0001-8708},
     journal={Adv. Math.},
      volume={378},
       pages={Paper No. 107528, 5},
         url={https://doi.org/10.1016/j.aim.2020.107528},
      review={\MR{4186575}},
}

\bib{ConlonFoxSudakov}{article}{
      author={Conlon, David},
      author={Fox, Jacob},
      author={Lee, Choongbum},
      author={Sudakov, Benny},
       title={Ordered {R}amsey numbers},
        date={2017},
        ISSN={0095-8956},
     journal={J. Combin. Theory Ser. B},
      volume={122},
       pages={353\ndash 383},
      review={\MR{3575208}},
}

\bib{Czabarka}{article}{
      author={Czabarka, \'{E}va},
      author={Wang, Zhiyu},
       title={Erd{\H{o}}s-{S}zekeres theorem for cyclic permutations},
        date={2019},
        ISSN={1944-4176},
     journal={Involve},
      volume={12},
      number={2},
       pages={351\ndash 360},
         url={https://doi.org/10.2140/involve.2019.12.351},
      review={\MR{3864223}},
}

\bib{DGR_ZG}{unpublished}{
      author={Dudek, Andrzej},
      author={Grytczuk, Jaros{\l a}w},
      author={Ruci\'{n}ski, Andrzej},
       title={Twins in ordered hyper-matchings},
        note={To appear in Discussiones Mathematicae Graph Theory,
  \href{https://www.dmgt.uz.zgora.pl/publish/pdf.php?doi=2535}{https://doi.org/10.7151/dmgt.2535}},
}

\bib{DGR_LATIN}{incollection}{
      author={Dudek, Andrzej},
      author={Grytczuk, Jaros{\l a}w},
      author={Ruci\'{n}ski, Andrzej},
       title={Patterns in ordered (random) matchings},
        date={2022},
   booktitle={L{ATIN} 2022: {T}he 15th {L}atin {A}merican {T}heoretical
  {I}nformatics {S}ymposium},
      series={Lecture Notes in Comput. Sci.},
      volume={13568},
   publisher={Springer},
       pages={544\ndash 556},
}

\bib{DGR-match}{article}{
      author={Dudek, Andrzej},
      author={Grytczuk, Jaros{\l a}w},
      author={Ruci\'{n}ski, Andrzej},
       title={Ordered unavoidable sub-structures in matchings and random
  matchings},
        date={2024},
     journal={Electron. J. Combin.},
      volume={31},
      number={2},
       pages={Paper No. 2.15, 27},
         url={https://doi.org/10.37236/11932},
      review={\MR{4734453}},
}

\bib{EliasMatousek}{article}{
      author={Eli\'{a}\v{s}, M.},
      author={Matou\v{s}ek, J.},
       title={Higher--order {E}rd{\H o}s--{S}zekeres theorems},
        date={2013},
     journal={Adv. Math.},
      volume={244},
       pages={1\ndash 15},
}

\bib{ErdosSzekeres}{article}{
      author={Erd\H{o}s, P.},
      author={Szekeres, G.},
       title={A combinatorial problem in geometry},
        date={1935},
        ISSN={0010-437X},
     journal={Compositio Math.},
      volume={2},
       pages={463\ndash 470},
      review={\MR{1556929}},
}

\bib{FoxPachSudakovSuk}{article}{
      author={Fox, J.},
      author={Pach, J.},
      author={Sudakov, B.},
      author={Suk, A.},
       title={{E}rd{\H o}s--{S}zekeres--type theorems for monotone paths and
  convex bodies},
        date={2012},
     journal={Proc. Lond. Math. Soc.},
      volume={105},
       pages={953\ndash 982},
}

\bib{Concrete}{book}{
      author={Graham, Ronald~L.},
      author={Knuth, Donald~E.},
      author={Patashnik, Oren},
       title={Concrete mathematics},
     edition={Second},
   publisher={Addison-Wesley Publishing Company, Reading, MA},
        date={1994},
        ISBN={0-201-55802-5},
        note={A foundation for computer science},
      review={\MR{1397498}},
}

\bib{HJW2019}{article}{
      author={Huynh, Tony},
      author={Joos, Felix},
      author={Wollan, Paul},
       title={A unified {E}rd{\H{o}}s-{P}{\'{o}}sa theorem for constrained
  cycles},
        date={2019},
        ISSN={0209-9683},
     journal={Combinatorica},
      volume={39},
      number={1},
       pages={91\ndash 133},
         url={https://doi.org/10.1007/s00493-017-3683-z},
      review={\MR{3936194}},
}

\bib{JLR}{book}{
      author={Janson, Svante},
      author={\L{}uczak, Tomasz},
      author={Ruci\'nski, Andrzej},
       title={Random graphs},
      series={Wiley-Interscience Series in Discrete Mathematics and
  Optimization},
   publisher={Wiley-Interscience, New York},
        date={2000},
        ISBN={0-471-17541-2},
         url={https://doi.org/10.1002/9781118032718},
      review={\MR{1782847}},
}

\bib{JSW}{article}{
      author={Justicz, Joyce},
      author={Scheinerman, Edward~R.},
      author={Winkler, Peter~M.},
       title={Random intervals},
        date={1990},
        ISSN={0002-9890,1930-0972},
     journal={Amer. Math. Monthly},
      volume={97},
      number={10},
       pages={881\ndash 889},
         url={https://doi.org/10.2307/2324324},
      review={\MR{1079974}},
}

\bib{LuczakMcDiarmid}{article}{
      author={Luczak, Malwina~J.},
      author={McDiarmid, Colin},
       title={Concentration for locally acting permutations},
        date={2003},
        ISSN={0012-365X},
     journal={Discrete Math.},
      volume={265},
      number={1-3},
       pages={159\ndash 171},
      review={\MR{1969372}},
}

\bib{McDiarmid1998}{incollection}{
      author={McDiarmid, Colin},
       title={Concentration},
        date={1998},
   booktitle={Probabilistic methods for algorithmic discrete mathematics},
      series={Algorithms Combin.},
      volume={16},
   publisher={Springer, Berlin},
       pages={195\ndash 248},
         url={https://doi.org/10.1007/978-3-662-12788-9_6},
      review={\MR{1678578}},
}

\bib{McDiarmid}{article}{
      author={McDiarmid, Colin},
       title={Concentration for independent permutations},
        date={2002},
        ISSN={0963-5483},
     journal={Combin. Probab. Comput.},
      volume={11},
      number={2},
       pages={163\ndash 178},
      review={\MR{1888907}},
}

\bib{MoshkovitzShapira}{article}{
      author={Moshkovits, G.},
      author={Shapira, A.},
       title={Ramsey theory, integer partitions and a new proof of the {E}rd{\H
  o}s--{S}zekeres {T}heorem},
        date={2014},
     journal={Adv. Math.},
      volume={262},
       pages={1107\ndash 1129},
}

\bib{SZ}{unpublished}{
      author={Sauermann, Lisa},
      author={Zakharov, Dmitrii},
       title={A sharp {R}amsey theorem for ordered hypergraph matchings},
  note={\href{https://doi.org/10.48550/arXiv.2309.04813}{https://doi.org/10.48550/arXiv.2309.04813}},
}

\bib{Stanley}{incollection}{
      author={Stanley, Richard~P.},
       title={Increasing and decreasing subsequences and their variants},
        date={2007},
   booktitle={International {C}ongress of {M}athematicians. {V}ol. {I}},
   publisher={Eur. Math. Soc., Z\"{u}rich},
       pages={545\ndash 579},
      review={\MR{2334203}},
}

\bib{SzaboTardos}{article}{
      author={Szab\'{o}, Tibor},
      author={Tardos, Gabor},
       title={A multidimensional generalization of the {E}rd{\H o}s--{S}zekeres
  lemma on monotone subsequences},
        date={2001},
     journal={Combin. Prob. Comput.},
      volume={10},
       pages={557\ndash 565},
}

\bib{Talagrand}{article}{
      author={Talagrand, Michel},
       title={Concentration of measure and isoperimetric inequalities in
  product spaces},
        date={1995},
        ISSN={0073-8301},
     journal={Inst. Hautes \'{E}tudes Sci. Publ. Math.},
      number={81},
       pages={73\ndash 205},
      review={\MR{1361756}},
}

\bib{Tardos}{inproceedings}{
      author={Tardos, G\'{a}bor},
       title={Extremal theory of ordered graphs},
        date={2018},
   booktitle={Proceedings of the {I}nternational {C}ongress of
  {M}athematicians---{R}io de {J}aneiro 2018. {V}ol. {IV}. {I}nvited lectures},
   publisher={World Sci. Publ., Hackensack, NJ},
       pages={3235\ndash 3243},
      review={\MR{3966530}},
}

\end{biblist}
\end{bibdiv}

\appendix
\section{Proof of Theorem \ref{Theorem E-S for LSW}}\label{appendix}

Since the proof of Theorem \ref{Theorem ESz General_classic} is inductive with the base step $r=2$, we provide here, for completeness, a proof of Theorem \ref{Theorem E-S for LSW}. We chose a variant which differs slightly from those given in \cite{DGR-match} and \cite{HJW2019}.

\begin{proof}[Proof of Theorem \ref{Theorem E-S for LSW}] Let $M$ be an ordered matching consisting of edges $\{a_i,b_i\}$, $i=1,2,\ldots, n$, with the left ends satisfying $a_1<\cdots <a_n$. Notice that the right ends of the edges define a permutation $\pi=(j_1,j_2,\ldots,j_n)$ accordingly to the order  $b_{j_1}<b_{j_2}<\cdots<b_{j_n}$.

By the original Erd\H{o}s-Szekeres theorem this permutation contains either a decreasing subsequence of length $s+1$ or an increasing subsequence of length $p=w\ell+1$. In the former case we are done, since any such decreasing subsequence corresponds to a stack. In the latter case we get a sub-matching $L$ with $p$ edges whose right ends come in the same order as the left ends. We call $L$ \emph{a landscape} (see Fig.\ \ref{ESZ3}). Notice that no pair of edges in a landscape may form a nesting.
\begin{figure}[ht]
\captionsetup[subfigure]{labelformat=empty}
\begin{center}

\scalebox{1}
{
\centering
\begin{tikzpicture}
[line width = .5pt,
vtx/.style={circle,draw,black,very thick,fill=black, line width = 1pt, inner sep=0pt},
]

%Landscape
    \coordinate (0) at (0.5,0) {};
    \node[vtx] (1) at (1,0) {};
    \node[vtx] (2) at (2,0) {};
    \node[vtx] (3) at (3,0) {};
    \node[vtx] (4) at (4,0) {};
    \node[vtx] (5) at (5,0) {};
    \coordinate (55) at (5,1.5) {};
    \node[vtx] (6) at (6,0) {};
    \node[vtx] (7) at (7,0) {};
    \coordinate (77) at (7,1.5) {};
    \node[vtx] (8) at (8,0) {};
    \node[vtx] (9) at (9,0) {};
    \coordinate (99) at (9.5,1.5) {};
    \node[vtx] (10) at (10,0) {};
    \coordinate (11) at (10.5,0) {};

    \draw[line width=0.3mm, color=lightgray]  (0) -- (11);
    \fill[fill=black, outer sep=1mm]  (1) circle (0.1) node [below] {\textcolor{blue}{$A$}};
    \fill[fill=black, outer sep=1mm]  (2) circle (0.1) node [below] {\textcolor{blue}{$B$}};
    \fill[fill=black, outer sep=1mm]  (3) circle (0.1) node [below] {\textcolor{blue}{$C$}};
    \fill[fill=black, outer sep=1mm]  (4) circle (0.1) node [below] {\textcolor{red}{$A$}};
    \fill[fill=black, outer sep=1mm]  (5) circle (0.1) node [below] {\textcolor{red}{$B$}};
    \fill[fill=black, outer sep=1mm]  (6) circle (0.1) node [below] {\textcolor{blue}{$D$}};
    \fill[fill=black, outer sep=1mm]  (7) circle (0.1) node [below] {\textcolor{red}{$C$}};
    \fill[fill=black, outer sep=1mm]  (8) circle (0.1) node [below] {\textcolor{red}{$D$}};
    \fill[fill=black, outer sep=1mm]  (9) circle (0.1) node [below] {\textcolor{blue}{$E$}};
    \fill[fill=black, outer sep=1mm]  (10) circle (0.1) node [below] {\textcolor{red}{$E$}};

    \draw[line width=0.5mm, color=black, outer sep=2mm] (4) arc (0:180:1.5);
    \draw[line width=0.5mm, color=black, outer sep=2mm] (5) arc (0:180:1.5);
    \draw[line width=0.5mm, color=black, outer sep=2mm] plot [smooth, tension=2] coordinates {(3) (55) (7)};
    \draw[line width=0.5mm, color=black, outer sep=2mm] plot [smooth, tension=2] coordinates {(6) (77) (8)};
    \draw[line width=0.5mm, color=black, outer sep=2mm] plot [smooth, tension=2] coordinates {(9) (99) (10)};

\end{tikzpicture}
}

\end{center}

\caption{A landscape with five edges.}
\label{ESZ3}
		
\end{figure}
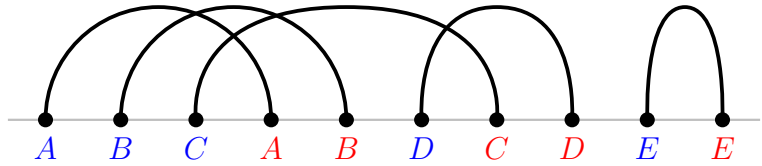

	%So, assume that $L$ is a landscape in $M$ of size at least $p=\ell w+1$.
 Let us order the edges of $L$ as $e_1<e_2<\cdots<e_p$, accordingly to the linear order of their left ends. Decompose $L$ into edge-disjoint waves, $W_1, W_2,\ldots, W_k$, in the following greedy way. For the first wave $W_1$, pick $e_1$ and all edges whose left ends are between the two ends of $e_1$, say, $W_1=\{e_1<e_2<\ldots<e_{i_1}\}$, for some $i_1\geqslant 1$. Clearly, $W_1$ is a genuine wave since there are no lines (and no nestings) in $W_1$.
 Also notice that the edges $e_1$ and $e_{i_1+1}$ form an alignment, since otherwise the latter edge would be included in $W_1$.

 Now, we may remove the wave $W_1$ from $L$ and repeat this step for $L-W_1$ to get the next wave $W_2=\{e_{i_1+1}<e_{i_1+2}<\ldots<e_{i_2}\}$, for some $i_2\geqslant i_{1}+1$. We iterate this procedure until there are no edges of $L$ left. Let the last wave be $W_k=\{e_{i_{k-1}+1}<e_{i_{k-1}+2}<\ldots<e_{i_k}\}$, with $i_k\geqslant i_{k-1}+1$. Clearly, the sequence $e_1<e_{i_1+1}<\ldots<e_{i_{k-1}+1}$ of the leftmost edges of the waves $W_i$, $i=1,\dots,k$,  forms a line (see Fig.~\ref{ESZ4}).
\begin{figure}[ht]
\captionsetup[subfigure]{labelformat=empty}
\begin{center}

\scalebox{1}
{
\centering
\begin{tikzpicture}
[line width = .5pt,
vtx/.style={circle,draw,black,very thick,fill=black, line width = 1pt, inner sep=0pt},
]

%Decomposition of a landscape
    \tikzmath{\step = 0.65; \h0 = 1.5; \h1 = 2; \h2=2.5;}
    \foreach \i in {1,...,24}
    {
         \node[vtx] (\i) at (\i*\step,0) {};
         \coordinate (\i00) at (\i*\step,\h0) {};
         \coordinate (\i01) at (\i*\step+\step/2,\h0) {};
         \coordinate (\i10) at (\i*\step,\h1) {};
         \coordinate (\i11) at (\i*\step+\step/2,\h1) {};
         \coordinate (\i20) at (\i*\step,\h2) {};
         \coordinate (\i21) at (\i*\step+\step/2,\h2) {};
    }

    \coordinate (0) at (0.25,0) {};
    \coordinate (25) at (25*\step-0.25,0) {};

    %Blue
    \draw[line width=0.8mm, color=blue, outer sep=2mm] plot [smooth, tension=2] coordinates {(1) (201) (4)};
    \draw[line width=0.5mm, color=blue, outer sep=2mm] plot [smooth, tension=2] coordinates {(2) (500) (8)};
    \draw[line width=0.5mm, color=blue, outer sep=2mm] plot [smooth, tension=2] coordinates {(3) (700) (11)};

    %Red
    \draw[line width=0.8mm, color=red, outer sep=2mm, dash pattern=on 10pt off 3pt] plot [smooth, tension=2] coordinates {(5) (811) (12)};
    \draw[line width=0.5mm, color=red, outer sep=2mm, dash pattern=on 10pt off 3pt] plot [smooth, tension=2] coordinates {(6) (1011) (15)};
    \draw[line width=0.5mm, color=red, outer sep=2mm, dash pattern=on 10pt off 3pt] plot [smooth, tension=2] coordinates {(7) (1111) (16)};
    \draw[line width=0.5mm, color=red, outer sep=2mm, dash pattern=on 10pt off 3pt] plot [smooth, tension=2] coordinates {(9) (1311) (18)};
    \draw[line width=0.5mm, color=red, outer sep=2mm, dash pattern=on 10pt off 3pt] plot [smooth, tension=2] coordinates {(10) (1411) (19)};

    %Green
    \draw[line width=0.8mm, color=ForestGreen, outer sep=2mm, densely dotted] plot [smooth, tension=2] coordinates {(13) (1720) (21)};
    \draw[line width=0.5mm, color=ForestGreen, outer sep=2mm, densely dotted] plot [smooth, tension=2] coordinates {(14) (1820) (22)};
    \draw[line width=0.5mm, color=ForestGreen, outer sep=2mm, densely dotted] plot [smooth, tension=2] coordinates {(17) (2020) (23)};
    \draw[line width=0.5mm, color=ForestGreen, outer sep=2mm, densely dotted] plot [smooth, tension=2] coordinates {(20) (2220) (24)};

    \draw[line width=0.3mm, color=lightgray]  (0) -- (25);
    \fill[fill=black, outer sep=1mm]  (1) circle (0.1) node [below] {\textcolor{blue}{$A$}};
    \fill[fill=black, outer sep=1mm]  (2) circle (0.1) node [below] {\textcolor{blue}{$B$}};
    \fill[fill=black, outer sep=1mm]  (3) circle (0.1) node [below] {\textcolor{blue}{$C$}};
    \fill[fill=black, outer sep=1mm]  (4) circle (0.1) node [below] {\textcolor{blue}{$A$}};
    \fill[fill=black, outer sep=1mm]  (5) circle (0.1) node [below] {\textcolor{red}{$D$}};
    \fill[fill=black, outer sep=1mm]  (6) circle (0.1) node [below] {\textcolor{red}{$E$}};
    \fill[fill=black, outer sep=1mm]  (7) circle (0.1) node [below] {\textcolor{red}{$F$}};
    \fill[fill=black, outer sep=1mm]  (8) circle (0.1) node [below] {\textcolor{blue}{$B$}};
    \fill[fill=black, outer sep=1mm]  (9) circle (0.1) node [below] {\textcolor{red}{$G$}};
    \fill[fill=black, outer sep=1mm]  (10) circle (0.1) node [below] {\textcolor{red}{$H$}};
    \fill[fill=black, outer sep=1mm]  (11) circle (0.1) node [below] {\textcolor{blue}{$C$}};
    \fill[fill=black, outer sep=1mm]  (12) circle (0.1) node [below] {\textcolor{red}{$D$}};
    \fill[fill=black, outer sep=1mm]  (13) circle (0.1) node [below] {\textcolor{ForestGreen}{$I$}};
    \fill[fill=black, outer sep=1mm]  (14) circle (0.1) node [below] {\textcolor{ForestGreen}{$J$}};
    \fill[fill=black, outer sep=1mm]  (15) circle (0.1) node [below] {\textcolor{red}{$E$}};
    \fill[fill=black, outer sep=1mm]  (16) circle (0.1) node [below] {\textcolor{red}{$F$}};
    \fill[fill=black, outer sep=1mm]  (17) circle (0.1) node [below] {\textcolor{ForestGreen}{$K$}};
    \fill[fill=black, outer sep=1mm]  (18) circle (0.1) node [below] {\textcolor{red}{$G$}};
    \fill[fill=black, outer sep=1mm]  (19) circle (0.1) node [below] {\textcolor{red}{$H$}};
    \fill[fill=black, outer sep=1mm]  (20) circle (0.1) node [below] {\textcolor{ForestGreen}{$L$}};
    \fill[fill=black, outer sep=1mm]  (21) circle (0.1) node [below] {\textcolor{ForestGreen}{$I$}};
    \fill[fill=black, outer sep=1mm]  (22) circle (0.1) node [below] {\textcolor{ForestGreen}{$J$}};
    \fill[fill=black, outer sep=1mm]  (23) circle (0.1) node [below] {\textcolor{ForestGreen}{$K$}};
    \fill[fill=black, outer sep=1mm]  (24) circle (0.1) node [below] {\textcolor{ForestGreen}{$L$}};

\end{tikzpicture}
}

\end{center}

\caption{Greedy decomposition of a landscape into waves. The leftmost edges of the waves, forming a line, are bold.}
\label{ESZ4}
		
\end{figure}
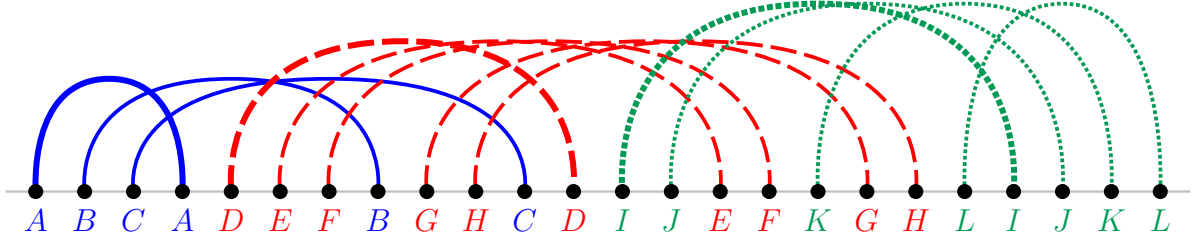

If $k\geqslant \ell+1$, then we are done. Otherwise,  $k\leqslant \ell$, and, since $p=\ell w+1$, some wave $W_i$ must have at least
$$\frac pk=\frac{\ell w+1}\ell>w$$
edges. This completes the proof.
\end{proof}

\section{An alternative proof of Lemma~\ref{lemma:span_prob}}\label{lemma:span_prob:2}

For simplicity we assume that $t\to\infty$ together with $n$.
Let $\mathcal E$ be the event that no edge in $\rm^{(r)}_{n}$ spans the sets  $A_1,\dots,A_r$. We will  generate $\rm^{(r)}_{n}$ using a random permutation~$\pi$ of $[rn]$ such that the edges are defined as $e_i = \pi(\{(i-1)r+1,\dots,ir\})$ for $1\le i\le n$.
	
Let $n'=\lceil n/2\rceil$. First observe that $\left|\bigcup_{1\le i\le n'} e_i \cap A_j\right|$ has a hypergeometric distribution,
\[
\Pr\left(\left|\bigcup_{1\le i\le n'} e_i \cap A_j\right| = k \right) = \frac{\binom{|A_j|}{k}\binom{rn-|A_j|}{rn'-k}}{\binom{rn}{rn'}}
= \frac{\binom{t}{k}\binom{rn-t}{rn'-k}}{\binom{rn}{rn'}}.
\]
Thus, the expected size of the intersection is $trn' / rn \ge t/2$ and the Chernoff bound for the hypergeometric distribution (see, e.g., \cite[Theorem 2.10]{JLR}) together with the union bound  yields
\[
\Pr\left(\exists_{1\le j\le r}: \left|\bigcup_{1\le i\le n'} e_i \cap A_j\right| \ge 0.99t \right) \le \sum_{1\le j\le r}\Pr\left( \left|\bigcup_{1\le i\le n'} e_i \cap A_j\right|\ge0.99t\right)\le \exp(-\Omega(t)).
\]

Let $\mathcal{F}$ be the event that ``$\forall_{1\le j\le r}: \left|\bigcup_{1\le i\le n'} e_i \cap A_j\right| < 0.99t$''. %Clearly, $\Pr(\mathcal{F}) \ge 1 - \exp(-c_r t)$.
Also, for $1\le i\le n'$,  let ${\mathcal E}_i$ be the event that $e_i$ does not span  $A_1,\dots,A_r$. Note that conditioning on $\mathcal{F}$ implies that even after revealing the first $(i-1)r$ positions of $\pi$ (yielding $e_1,\dots,e_{i-1}$), we still have at least $0.01t$ uncovered points in each $A_j$. This, in particular, implies that
\[
\Pr(\bar{{\mathcal E}_i} | {\mathcal E}_1,\dots,{\mathcal E}_{i-1},\mathcal{F}) \ge \frac{(0.01t)^r}{\binom{rn - (i-1)r}{r}}
\ge \frac{(0.01t)^r}{\binom{rn}{r}}
\ge \left( \frac{0.01t}{en} \right)^r.
\]
Then, by the chain formula,
\begin{align*}
\PP(\mathcal E|\mathcal{F})
&\le\PP\left(\bigcap_{i=1}^{n'}{\mathcal E}_i|\mathcal{F}\right)=\PP({\mathcal E}_1|\mathcal{F})\PP({\mathcal E}_2|{\mathcal E}_1, \mathcal{F})\cdots\PP\left({\mathcal E}_{n'}\Big|\bigcap_{i=1}^{n'-1}{\mathcal E}_i, \mathcal{F}\right)\\
&\le \left( 1- \left( \frac{0.01t}{en} \right)^r \right)^{n'}
\le \exp\left( -\left( \frac{0.01t}{en} \right)^r n'\right)
\le \exp\left( -\frac{1}{2}\left( \frac{0.01}{e} \right)^r \frac{t^r}{n^{r-1}}\right).
\end{align*}
%(The chain rule can be avoided by using stochastic dominance.)

Finally,
\begin{align*}
\PP(\mathcal E) &= \PP(\mathcal E|\mathcal{F}) \PP(\mathcal{F}) + \PP(\mathcal E|\bar{\mathcal{F}}) \PP(\bar{\mathcal{F}})
\le \PP(\mathcal E|\mathcal{F}) +  \PP(\bar{\mathcal{F}})\\
&\le \exp\left( -\frac{1}{2}\left( \frac{0.01}{e} \right)^r \frac{t^r}{n^{r-1}}\right) + \exp(-\Omega(t))
\le \exp\left(- \Omega\left(\frac{t^r}{n^{r-1}}\right)\right),
\end{align*}
since $t=O(n)$. \qed

\begin{rem}\label{B1}
A similar idea can be used in order to replace Talagrand's inequality by Chebyshev's inequality in the proof of Theorem~\ref{thm:random}. Here we sketch how to do it.

In the proof of Theorem~\ref{thm:random} rename sets $B_1,\dots,B_{rk}$ as $A_1^{(\ell)},\dots,A_{r}^{(\ell)}$ for $1\le \ell \le k$ in such a way that for any fixed $\ell$ this $r$-tuple corresponds to sets $B_j$'s for $v_j\in e_\ell$.
Let $\eps>0$ be an arbitrarily small constant. Observe that for any $\eps n \le s \le n/2$,
$\left|\bigcup_{1\le i\le s} e_i \cap A_j^{(\ell)}\right|$ has a hypergeometric distribution with the expected size of the intersection $(s/n)t$. Thus, due to Chernoff's inequality a.a.s.\ the intersection has size $(s/n)t+o(t)$, uniformly over all choices of $s,\ell, j$. Let $\mathcal{F}$ be the event that ``$\forall_{j,\ell,s}: \left|\left|\bigcup_{1\le i\le s} e_i \cap A_j^{(\ell)}\right| - (s/n)t \right| < t^{2/3}$''.
Define $X_\ell$ to be the indicator random variable equal 1 if for each $\eps n \le i\le n/2$ there exists some $1\le j\le r$ such that $e_i\cap A_j^{(\ell)} = \emptyset$.
Now, conditioning on $\mathcal{F}$, one can quite precisely estimate $\E(X_{\ell})$ and $\E(X_{\ell}X_{\ell'})$ for different $\ell\neq\ell'$, and finally apply Chebyshev's inequality to the random variable $\sum_{\ell=1}^k X_{\ell}$.

\end{rem}

\section{Lower bound for lines -- a special case of Theorem~\ref{thm:random}}\label{thm:lines}
Here we give a simpler proof of Theorem~\ref{thm:random} in the special case of lines.
\begin{thm}\label{lower_line}
	A random matching $\rm^{(r)}_{n}$ contains a.a.s.\ a line of size  $\Omega_r(n^{1/r})$.
\end{thm}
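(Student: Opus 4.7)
The strategy is to reuse the block blow-up argument from the proof of Theorem~\ref{thm:random}, but to exploit the spatial separation built into the line pattern in order to replace Talagrand's inequality by Chebyshev's inequality.

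Fix a small constant $c_r>0$ (to be chosen) and set $k:=\lfloor c_r n^{1/r}\rfloor$. Partition $[rn]$ into $k$ consecutive groups $G_1<G_2<\cdots<G_k$, each of size $s:=rn/k=\Theta_r(n^{1-1/r})$, and subdivide every $G_i$ further into $r$ consecutive equal blocks $B_i^1,\ldots,B_i^r$ of size $t:=s/r$. Let $J_i$ be the indicator of the event that some edge of $\rm^{(r)}_n$ meets each of $B_i^1,\ldots,B_i^r$ in exactly one vertex, and set $X:=\sum_{i=1}^k J_i$. Since the $G_i$'s are pairwise disjoint and linearly ordered, any collection of edges witnessing $J_{i_1}=\cdots=J_{i_m}=1$ with $i_1<\cdots<i_m$ automatically forms a line of size $m$ in $\rm^{(r)}_n$. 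It therefore suffices to prove that a.a.s.\ $X=\Omega_r(k)$.

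For the first moment, Lemma~\ref{lemma:span_prob} applied to $B_i^1,\ldots,B_i^r$ (note that $t^r/n^{r-1}=c_r^{-r}$) yields $\Pr(J_i=0)\le\exp\{-(2rc_r)^{-r}\}$; choosing $c_r$ small enough makes this at most $1/4$, and hence $\E[X]\ge 3k/4$. For the second moment, I plan to show $\mathrm{Cov}(J_i,J_j)=o(1)$ uniformly over $i\ne j$, by a two-stage exposure. Condition on the $\sigma$-algebra $\mathcal{F}_i$ generated by the edges of $\rm^{(r)}_n$ incident to $G_i$: then $J_i$ is $\mathcal{F}_i$-measurable and, by the exchangeability in the permutational scheme, the matching on the residual vertex set $V':=[rn]\setminus(G_i\cup N(G_i))$ is a uniform $(r)$-matching. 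A simple double counting gives $\E|G_j\setminus V'|=O_r(s^2/n)=o(t)$, so a union bound shows that with probability $1-o(1)$ each residual block $B_j^\ell\cap V'$ still has size at least $t/2$; on this good event, a second application of Lemma~\ref{lemma:span_prob} to the residual problem gives $\Pr(J_j=1\mid\mathcal{F}_i)\ge 3/4-o(1)$, matching the unconditional lower bound $\Pr(J_j=1)\ge 3/4$ up to $o(1)$. Hence $|\mathrm{Cov}(J_i,J_j)|=o(1)$, which yields $\mathrm{Var}(X)\le k+o(k^2)=o(k^2)$. Chebyshev's inequality then gives $\Pr(X\le k/4)\le 4\mathrm{Var}(X)/k^2=o(1)$, so a.a.s.\ $X\ge k/4=\Omega_r(n^{1/r})$, providing a line of the claimed size.

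The main technical obstacle is making the residual uniformity argument rigorous: one must verify that, conditional on $\mathcal{F}_i$, the matching on $V'$ is indeed uniform (clean via the permutational scheme), and, more delicately, that the residual block sizes $|B_j^\ell\cap V'|$ remain a constant fraction of $t$ with high probability so that Lemma~\ref{lemma:span_prob} can actually be invoked on the conditional problem. This latter point reduces to the Markov-type bound $\E|G_j\setminus V'|=O_r(s^2/n)=o(t)$ combined with a union bound over the $r$ blocks of $G_j$. On the negligible bad event the trivial estimate $\Pr(J_j=1\mid\mathcal{F}_i)\le 1$ contributes only $o(1)$ to the covariance, so the overall $o(k^2)$ variance bound is preserved and the theorem follows.
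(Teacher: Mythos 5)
Your setup (disjoint consecutive groups $G_1<\cdots<G_k$, spanning indicators $J_i$, the observation that edges spanning the blocks of distinct groups automatically form a line, and the first-moment bound via Lemma~\ref{lemma:span_prob}) is sound, and it is essentially the template argument of Theorem~\ref{thm:random} specialized to the alignment. The genuine gap is in the covariance step. You argue: $\Pr(J_j=1\mid\mathcal F_i)\ge 3/4-o(1)$ and $\Pr(J_j=1)\ge 3/4$, ``hence $|\mathrm{Cov}(J_i,J_j)|=o(1)$.'' This inference is invalid: two matching \emph{lower} bounds on the conditional and unconditional success probabilities do not control the covariance. For the Chebyshev bound you need an \emph{upper} bound on $\Pr(J_i=0,J_j=0)-\Pr(J_i=0)\Pr(J_j=0)$, i.e.\ you need $\Pr(J_j=0\mid\mathcal F_i)\le \Pr(J_j=0)+o(1)$, a two-sided comparison of conditional and unconditional probabilities with $o(1)$ precision. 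Lemma~\ref{lemma:span_prob} cannot deliver this: it only gives one-sided exponential upper bounds on failure probabilities, and the constant in the conditional application is strictly worse (you shrink the blocks to size $t/2$, so you get $\Pr(J_j=0\mid\mathcal F_i)\le \exp\{-(4rc_r)^{-r}\}+o(1)$, while the true $\Pr(J_j=0)$ may be far smaller). What your argument actually yields is $\mathrm{Cov}(J_i,J_j)\le pq+o(1)$ with $p=\exp\{-(2rc_r)^{-r}\}$, $q=\exp\{-(4rc_r)^{-r}\}$ fixed positive constants, hence $\mathrm{Var}(X)\le k+(pq+o(1))k^2$ and $\Pr(X\le k/4)\le 4pq+o(1)$. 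That is a small constant, not $o(1)$: you obtain ``with probability $\ge 1-\eps$ there is a line of size $c_r(\eps)n^{1/r}$,'' or, letting $c_r\to 0$ with $n$, only a line of size $n^{1/r}/\omega(n)$ --- exactly the weaker statement the paper notes can be had without any concentration inequality --- but not the asserted a.a.s.\ bound $\Omega_r(n^{1/r})$ with a constant depending on $r$ alone.

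To repair this along your lines one must estimate $\E[J_i]$ and $\E[J_iJ_j]$ with matching $(1+o(1))$ precision after conditioning on a hypergeometric-concentration event for the block intersections; this is precisely the (technical) route sketched in Remark~\ref{B1}, and it requires genuinely computing the spanning probabilities, not invoking the one-sided bound of Lemma~\ref{lemma:span_prob} twice with different constants. Alternatively one keeps Talagrand's inequality as in the proof of Theorem~\ref{thm:random}. Note also that the paper's own proof of Theorem~\ref{lower_line} avoids this issue entirely by a different mechanism: it counts edges of length at most $m=n^{1-1/r}$ (Lemma~\ref{lemma:edges}, a second-moment computation for a sum over \emph{fixed} vertex sets, where exact joint probabilities are available), shows that a.a.s.\ the number of ``nonliners'' among these short edges is smaller than half their number, and deletes one edge from each nonliner to leave a line of size $\Omega_r(n^{1/r})$. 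Your structural reduction is correct and the first moment is fine, but as written the second-moment step does not close.
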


The idea behind the proof of Theorem \ref{lower_line}, employed already in \cite{DGR-match}, is to restrict oneself to short edges of $\rm^{(r)}_{n}$ and show that among them the number of pairs of edges \emph{not} forming an alignment is smaller than the total number of such edges. Hence, removing one edge of each such pair leaves a large line.
To make this idea work, we need an auxiliary result counting short edges of $\rm^{(r)}_{n}$. Define the \emph{length} of a subset $S=\{i_1<\dots<i_s\}$ of $[rn]$ as $d(S):=i_s-i_1$ (see Fig.\ \ref{length}). Clearly, for every $S$ of size $s$ we have $s-1\le d(S)\le rn-1$. Furthermore, let $f_s^{(r)}(n,m)$ be the number of $s$-element subsets $S$ of $[rn]$ with $d(S)\le m$. Then
 \begin{equation}\label{f}
 f_s^{(r)}(n,m)=\sum_{s-1\le d\le m} (rn-d) \binom{d-1}{s-2}=rn\binom m{s-1}-(s-1)\binom{m+1}s,
  \end{equation}
  since for every choice of vertices $i_1$ and $i_s$ such that $i_s-i_1=d$ (there are $rn-d$ such choices), there are $\binom{d-1}{s-2}$ choices for the remaining vertices of $S$. Moreover,
 $\sum_{s-1\le d\le m} \binom{d-1}{s-2} = \binom{m}{s-1}$, while $\sum_{s-1\le d\le m} d\binom{d-1}{s-2} = (s-1)\binom{m+1}{s}$.

\begin{figure}[ht]
\captionsetup[subfigure]{labelformat=empty}
\begin{center}

\scalebox{1}
{
\centering
\begin{tikzpicture}
[line width = .5pt,
vtx/.style={circle,draw,black,very thick,fill=black, line width = 1pt, inner sep=0pt},
]

    \node[vtx] (1) at (0,0) {};
    \node[vtx] (2) at (2,0) {};
    \node[vtx] (3) at (4,0) {};
    \node[vtx] (4) at (6,0) {};
    \node[vtx] (5) at (7,0) {};
    \node[vtx] (6) at (11,0) {};

    \draw[line width=0.3mm, color=lightgray]  (1) -- (6);

    \draw [line width=0.5mm, decorate, decoration = {calligraphic brace,raise=10pt, amplitude=15pt}] (2,-0.2) --  (7,-0.2);
    \node at (4.5,1) {$i_s-i_1=d$};
    \node at (4.9,-0.4) {$\ldots$};

    \fill[fill=black, outer sep=1mm]   (1) circle (0.1) node [below] {$1$};
    \fill[fill=black, outer sep=1mm]   (2) circle (0.1) node [below] {$i_1$};
    \fill[fill=black, outer sep=1mm]  (3) circle (0.1) node [below] {$i_2$};
    \fill[fill=black, outer sep=1mm]   (4) circle (0.1) node [below] {$i_{s-1}$};
    \fill[fill=black, outer sep=1mm]   (5) circle (0.1) node [below] {$i_s$};
    \fill[fill=black, outer sep=1mm]   (6) circle (0.1) node [below] {$rn$};

\end{tikzpicture}
}

\end{center}

\caption{An edge of length $d$.}
\label{length}
			
\end{figure}

\begin{lemma}\label{lemma:edges}
	Let a sequence $m=m(n)$ be given such that $n^{1-\frac{1}{r-1}} \ll m \ll n$.
	Then, a.a.s.\ the number of edges  $e\in\rm^{(r)}_{n}$ with length $d(e)\le m$ is equal to $\frac{m^{r-1}}{(rn)^{r-2}} (1+o(1))$.
\end{lemma}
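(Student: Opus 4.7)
My approach is the second moment method. For each $r$-element subset $S \subseteq [rn]$ with $d(S) \le m$, let $I_S$ be the indicator that $S$ is an edge of $\rm^{(r)}_{n}$, and set $X = \sum_S I_S$. I will show that $\EE[X] = (1+o(1))\, m^{r-1}/(rn)^{r-2}$ and $\mathrm{Var}(X) = o(\EE[X]^2)$, after which Chebyshev's inequality immediately yields the a.a.s.\ conclusion.

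\emph{Expectation.} Since, by the online scheme (or a direct counting of matchings containing a fixed edge), $\PP(I_S = 1) = \alpha^{(r)}_{n-1}/\alpha^{(r)}_n = r!\,n/(rn)_r$, where $(rn)_r := rn(rn-1)\cdots(rn-r+1)$, formula~\eqref{f} with $s=r$ gives
\[
\EE[X] = \Big[rn\binom{m}{r-1} - (r-1)\binom{m+1}{r}\Big]\cdot \frac{r!\,n}{(rn)_r}.
\]
The ratio of the two bracketed terms is of order $m/n = o(1)$ by hypothesis, so the first term dominates; combined with $(rn)_r \sim (rn)^r$, this yields $\EE[X] \sim m^{r-1}/(rn)^{r-2}$. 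The lower bound $m \gg n^{1-1/(r-1)}$ is precisely what forces $\EE[X] \to \infty$, which will be needed for Chebyshev's inequality.

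\emph{Variance.} Write $p := \PP(I_S = 1)$ and decompose
\[
\mathrm{Var}(X) = \sum_S \mathrm{Var}(I_S) + \sum_{S\ne S',\; S\cap S'\ne\emptyset} \mathrm{Cov}(I_S, I_{S'}) + \sum_{S\ne S',\; S\cap S' = \emptyset}\mathrm{Cov}(I_S, I_{S'}).
\]
The first sum is bounded by $\EE[X]$, hence $o(\EE[X]^2)$. Whenever $S$ and $S'$ share a vertex, $I_SI_{S'}\equiv 0$ (a matching has pairwise disjoint edges), so each such covariance equals $-p^2 \le 0$ and the corresponding sum is non-positive. For disjoint pairs,
\[
\PP(I_S = I_{S'} = 1) = \alpha^{(r)}_{n-2}/\alpha^{(r)}_n = \frac{(r!)^2 n(n-1)}{(rn)_r(rn-r)_r},
\]
and an elementary ratio estimate based on $(rn)_r/(rn-r)_r = 1 + r/n + O(n^{-2})$ shows $\mathrm{Cov}(I_S, I_{S'}) = (r-1)p^2/n \cdot (1+o(1))$. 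Summing over at most $f_r^{(r)}(n,m)^2$ ordered pairs yields a total contribution of $O(\EE[X]^2/n) = o(\EE[X]^2)$, so $\mathrm{Var}(X) = o(\EE[X]^2)$ as required.

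The only mildly delicate point is the cancellation inside the covariance for disjoint pairs: both $\PP(I_S=I_{S'}=1)$ and $p^2$ are of order $(r!)^2 n^2/(rn)_r^2$, and the crucial saving by a factor of $1/n$ comes from the small ratio between the falling factorials $(rn)_r$ and $(rn-r)_r$. Everything else is a routine asymptotic computation.
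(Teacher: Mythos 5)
Your proposal is correct and follows essentially the same route as the paper: the same second-moment method with indicators over short $r$-sets, the expectation computed via formula~\eqref{f}, the key observation that the two-edge probability exceeds $p^2$ only by a factor $1+O(1/n)$, and Chebyshev's inequality to conclude. The only difference is bookkeeping — you split the variance into diagonal, intersecting, and disjoint pairs and estimate the covariance exactly, whereas the paper crudely bounds $\EE(X(X-1))$ by $(\EE X)^2\binom{rn-1}{r-1}/\binom{rn-r-1}{r-1}$ — so this is a matter of presentation rather than a different argument.
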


\begin{proof} We are going to apply the second moment method.
	For each $r$-tuple $e\in\binom{[rn]}r$, let $I_e$ be the indicator random variable such that $I_e=1$   if $e \in \rm^{(r)}_{n}$ and $I_e=0$, otherwise. Clearly, using the online scheme, $\Pr(I_{e}=1)=1/\binom{rn-1}{r-1}$. Let $X = \sum I_{e}$, where the summation is taken over all $e=\{i_1<\dots<i_r\}$ with $d(e)\le m$. In other words, $X$ counts all edges in $\rm^{(r)}_{n}$ of length at most $m$.
	Observe that the number of summands in the definition of $X$ is exactly $f_r^{(r)}(n,m)$. As
 $m \ll n$, by \eqref{f} we have
	\[
	 f_r^{(r)}(n,m)= rn \binom{m}{r-1} (1+o(1))
	\]
	and so
	\[
	\E X =\sum\E I_e=\frac{f_r^{(r)}(n,m)}{\binom{rn-1}{r-1}}= \frac{rn \binom{m}{r-1} (1+o(1))}{\binom{rn-1}{r-1}}
	%= \frac{rn \frac{m^{r-1}}{(r-1)!} }{\frac{(rn)^{r-1}}{(r-1)!}} (1-o(1))
	= \frac{m^{r-1}}{(rn)^{r-2}} (1+o(1)).
	\]
	By the assumption that $m\gg n^{1-\frac{1}{r-1}}$, we have $\E X\to\infty$.

To estimate the second moment, notice that by the online scheme of generating $\rm^{(r)}_n$,
	\begin{align}\label{ee}
	\Pr(I_{e}=I_{e'}=1) &=\Pr(e,e'\in \rm^{(r)}_n)=\Pr(e\in \rm^{(r)}_n)\Pr(e'\in \rm^{(r)}_n|e\in\rm^{(r)}_n)\notag \\
&=\begin{cases}
		\frac{1}{\binom{rn-1}{r-1} \binom{rn-r-1}{r-1}}, & \text{ if $e\cap e'=\emptyset$},\\
		0, & \text { otherwise}.
	\end{cases}
	\end{align}

	Thus, quite crudely,
\begin{align*}
	\E(X(X-1))&=\sum_{e:d(e)\le m}\sum_{e': e'\cap e=\emptyset, d(e')\le m}\Pr(I_e=I_{e'}=1)\\
	&\le \frac{(f_r^{(r)}(n,m))^2}{\binom{rn-1}{r-1}\binom{rn-r-1}{r-1}} = (\E X)^2 \frac{\binom{rn-1}{r-1}}{\binom{rn-r-1}{r-1}}.
\end{align*}

Let $\eps:=\eps(n)\to0$, but $\eps\gg\sqrt{n^{r-2}/m^{r-1}}$.
Then, by applying Chebyshev's inequality
	\begin{align*}\Pr(|X-\E X|\ge\eps\E X) &\le \frac{\E (X(X-1))+\E X-(\E X)^2}{\eps^2(\E X)^2}\\&\le\frac1{\eps^2}\left(\frac{\binom{rn-1}{r-1}}{\binom{rn-r-1}{r-1}}+\frac1{\E X}-1\right)=\frac1{\eps^2}\left(O\left(1/n\right)+\frac{1}{\E X}\right)
		\to0,
	\end{align*}
	as $\eps^2 \E X \to\infty$. Thus, a.a.s.\ $X=(1\pm\eps)\E X=\frac{m^{r-1}}{(rn)^{r-2}} (1+o(1))$.
\end{proof}

%\medskip

\begin{proof}[Proof of Theorem \ref{lower_line}]
	For $r=2$, it was already proved in~\cite[Theorem 12]{DGR-match} that a.a.s.\ in $\rm^{(2)}_{n}$ there are lines of size $\Omega(\sqrt{n})$. Nevertheless, we include this case here.
	
	Let  $m =  n^{1-\frac{1}{r}}$ (for simplicity assume that this is an integer). Note that $n^{1-\frac{1}{r-1}} \ll m \ll n$. By Lemma~\ref{lemma:edges}, a.a.s.\ the number of edges $X$ of length at most~$m$ in $\rm^{(r)}_{n}$ is
	\[
X=\frac{m^{r-1}}{(rn)^{r-2}} (1+o(1))
	\ge\frac{2m^{r-1}}{3(rn)^{r-2}}
	= \frac{2n^{(r-1)^2/r}}{3(rn)^{r-2}}
	= \frac{2}{3r^{r-2}}n^{1/r}.
	\]
	We say that two edges $\{i_1<\dots<i_r\}$ and $\{j_1<\dots<j_r\}$ form a \emph{nonliner} if either $i_1<j_1<i_r$ or $j_1<i_1<j_r$.  In other words, a nonliner is any pattern, collectable or not, other than the alignment $P_1$.
	
	We are going to show that among the edges of length at most~$m$, there are a.a.s.\ at most $\frac{1}{2r^{r-2}}n^{1/r}$ nonliners. After removing one edge from each nonliner (possibly with repetitions) we then obtain a line of size at least $\frac{1}{6r^{r-2}}n^{1/r}$, which will end the proof.
	
	For a $2r$-element subset $T=\{i_1<\dots<i_r,\; j_1<\dots<j_r\}\subset [rn]$ with $i_1<j_1<i_r$, let $I_T$ be the indicator random variable equal to $1$ if both $e_1:=\{i_1<\dots<i_r\}$ and $e_2:=\{j_1<\dots<j_r\}$ are edges of $\rm^{(r)}_n$,  and $I_T=0$ otherwise. Note that if $I_T=1$, then $e_1,e_2$ form a nonliner  in $\rm^{(r)}_n$. By \eqref{ee},
	\[
	\Pr(I_T=1)=\frac{1}{\binom{rn-1}{r-1} \binom{rn-r-1}{r-1}}
	= \frac{((r-1)!)^2}{(rn)^{2r-2}} (1+o(1)).
	\]

\begin{figure}[ht]
\captionsetup[subfigure]{labelformat=empty}
\begin{center}

\scalebox{1}
{
\centering
\begin{tikzpicture}
[line width = .5pt,
vtx/.style={circle,draw,black,very thick,fill=black, line width = 1pt, inner sep=0pt},
]

    \node[vtx] (1) at (0,0) {};
    \node[vtx] (2) at (2,0) {};
    \node[vtx] (3) at (10,0) {};
    \node[vtx] (4) at (5,0) {};
    \node[vtx] (5) at (7,0) {};
    \node[vtx] (6) at (11,0) {};

    \draw[line width=0.3mm, color=lightgray]  (1) -- (6);

    \draw [line width=0.5mm, decorate, decoration = {calligraphic brace,raise=10pt, amplitude=15pt}] (2,-0.2) --  (7,-0.2);
    \node at (4.5,1) {$i_r-i_1\le m$};
    \draw [line width=0.5mm, decorate, decoration = {calligraphic brace,mirror,raise=10pt, amplitude=15pt}] (5,-0.35) --  (10,-0.35);
    \node at (7.5,-1.5) {$j_r-j_1\le m$};
    \node at (3.5,-0.4) {$\ldots$};
    \node at (5.9,-0.4) {$\ldots$};
    \node at (8.5,-0.4) {$\ldots$};

    \fill[fill=black, outer sep=1mm]   (1) circle (0.1) node [below] {$1$};
    \fill[fill=black, outer sep=1mm]   (2) circle (0.1) node [below] {$i_1$};
    \fill[fill=black, outer sep=1mm]  (3) circle (0.1) node [below] {$j_r$};
    \fill[fill=black, outer sep=1mm]   (4) circle (0.1) node [below] {$j_1$};
    \fill[fill=black, outer sep=1mm]   (5) circle (0.1) node [below] {$i_r$};
    \fill[fill=black, outer sep=1mm]   (6) circle (0.1) node [below] {$rn$};

\end{tikzpicture}
}

\end{center}

\caption{A nonliner with edges of lengths at most $m$.}
\label{T}
			
\end{figure}

	Let $Y = \sum I_T$, where the summation is taken over all sets $T$ as above and such that $i_r-i_1\le m$ and $j_r-j_1\le m$  (see Fig.\ \ref{T}). Then $Y$ counts all nonliners in $\rm^{(r)}_n$ formed by the edges of length at most $m$. Let $g^{(r)}(n,m)$ denote the number of terms in this sum. In order to estimate $g^{(r)}(n,m)$,  we first count the number of choices of $e_1$ and $j_1$. Since $i_1<j_1<i_r$, we can treat them as one  $(r+1)$-set $\{\iota_1<\dots<\iota_{r+1}\}$ from $[rn]$ of length at most $m$. Once it is chosen, we designate one of the vertices in~$\{\iota_2,\dots,\iota_{r}\}$ to be $j_1$ and the remaining ones become $e_1$. Using notation from prior to Lemma~\ref{lemma:edges}, the number of such choices is
	\[
	f_{r+1}^{(r)}(n,m) \cdot (r-1)
	= r(r-1)n \binom{m}{r} (1+o(1)).
	\]
	Finally, the number of choices of $\{j_2,\dots,j_r\}$ satisfying $j_r-j_1\le m$ is at least $\binom{m-(r-1)}{r-1}$ and at most $\binom{m}{r-1}$. Thus,
	\[
	g^{(r)}(n,m) = r(r-1)n \binom{m}{r} \binom{m}{r-1} (1+o(1))
	= \frac{(r-1)nm^{2r-1}}{((r-1)!)^2} (1+o(1))
	\]
	and so
	\[
	\E Y = \frac{(r-1)nm^{2r-1}}{(rn)^{2r-2}} (1+o(1))
	= \frac{(r-1)nn^{(r-1)(2r-1)/r}}{(rn)^{2r-2}} (1+o(1))
	= \frac{r-1}{r^{2r-2}} n^{1/r} (1+o(1)).
	\]
	It remains to show that, say, a.a.s.\ $Y\le \frac 32\E Y$. If so, then a.a.s.\
	\[
	Y \le \frac{3(r-1)}{2r^{2r-2}} n^{1/r}(1+o(1))  < \frac{1}{2r^{r-2}}n^{1/r},
	\]
as required,
	since the inequality $\frac{3(r-1)}{2r^{2r-2}} < \frac{1}{2r^{r-2}}$ is equivalent to $3(r-1)< r^r$, which holds for every $r\ge2$.
	
	To achieve our last goal, we apply Chebyshev's inequality. For this sake, we need to estimate $\E (Y(Y-1))$, which can be written as
	\[
	\E(Y(Y-1))=\sum_{T,T'}\Pr(I_T=I_{T'}=1),
	\]
	where the summation is taken over all (ordered) pairs of potential
	  nonliners in $\rm^{(r)}_n$: $T=(e_1,e_2)$ and $T'=(e_1',e_2')$. We split the above sum into two sub-sums $\Sigma_1$ and $\Sigma_2$ according to whether $T\cap T'=\emptyset$ or $|T\cap T'|=r$ (and then $e_i=e_j'$ for some $1\le i\le j\le 2$ -- for all other options the above probability is zero).
	
	In the former case, by a similar chain formula as in \eqref{ee},
$$\Pr(I_T=I_{T'}=1)=\frac1{\binom{rn-1}{r-1} \binom{rn-r-1}{r-1}\binom{rn-2r-1}{r-1} \binom{rn-3r-1}{r-1}}$$
and so

	\[
	\Sigma_1\le\frac{g^{(r)}(n,m)^2}{\binom{rn-1}{r-1} \binom{rn-r-1}{r-1}\binom{rn-2r-1}{r-1} \binom{rn-3r-1}{r-1}}
	=(\E Y)^2(1+o(1)).
	\]
	In the latter case,
$$\Pr(I_T=I_{T'}=1)=\frac1{\binom{rn-1}{r-1} \binom{rn-r-1}{r-1}\binom{rn-2r-1}{r-1} },$$
while the number of such pairs $(T,T')$ is at most $g^{(r)}(n,m)\cdot 4m\binom{m}{r-1}$, as given $T$, there are four ways to select the common $r$-tuple and at most $m\binom{m}{r-1}$ ways to select the other $r$-tuple of $T'$. Thus,
	\[
	\Sigma_2\le \frac{g^{(r)}(n,m)\cdot 4m\binom{m}{r-1}}{\binom{rn-1}{r-1} \binom{rn-r-1}{r-1}\binom{rn-2r-1}{r-1}}
	=O_r\left(\frac{nm^{2r-1}\cdot m^r}{n^{3r-3}} \right)
	=O_r\left(\frac{m^{3r-1}}{n^{3r-4}} \right)
	=O_r(n^{1/r})
	\]
	and, altogether,
	\[
	\E (Y(Y-1)) \le (\E Y)^2(1+o(1)) + O_r(n^{1/r}).
	\]
	By Chebyshev's inequality,
	\begin{align*}
		\Pr(|Y-\E Y|\ge\E Y/2)&\le \frac{\E(Y(Y-1))+\E Y-(\E Y)^2}{(\E Y/2)^2}\\
		&\le\frac14\left(1+o(1)+\frac{O_r(n^{1/r})}{(\E Y)^2}+\frac1{\E Y}-1\right)=O_r(n^{-1/r})+o(1)=o(1).
	\end{align*}
	Thus, a.a.s.\ $Y\le \frac32\E Y$, as required.
\end{proof}

\begin{rem}
In fact, for $r=2$, as shown in~\cite{DGR-match}, it is sufficient to prove the lower bound in Theorem \ref{thm:random} only for lines. Indeed, then the lower bound for the other two patterns follows from Lemma~\ref{lem:upbr} combined with Corollary \ref{Theorem ESz General_partite} applied to the bipartite sub-matching of $\rm^{(2)}_n$, that is, one consisting  of edges with one endpoint in $[n]$ and the other in $[2n]\setminus[n]$ (note that such a sub-matching has no alignments and has, a.a.s., about $n/2$ edges; for details, see \cite{DGR-match}).
\end{rem}

\begin{rem}
In the above proof we had a little choice in defining parameter $m$. Indeed, there were two constraints: 1) the number of short edges $X=\Theta_r\left( \frac{m^{r-1}}{n^{r-2}}\right)$ should be $\Omega_r(n^{1/r})$ and 2) the number of nonliners $Y=\Theta_r\left( \frac{m^{2r-1}}{n^{2r-3}}\right)$ should be $O_r\left( \frac{m^{r-1}}{n^{r-2}}\right).$
And they are equivalent, respectively, to $m=\Omega_r(n^{1-1/r})$ and $m=O_r(n^{1-1/r})$.
\end{rem}

\end{document}